\newcommand{\nosemic}{\renewcommand{\@endalgocfline}{\relax}}
\newcommand{\dosemic}{\renewcommand{\@endalgocfline}{\algocf@endline}}
\let\oldnl\nl
\newcommand{\nonl}{\renewcommand{\nl}{\let\nl\oldnl}}
\colorlet{dgreen}{green!60!black}
\colorlet{myblue}{blue!70!black}
\colorlet{myred}{red!70!black}
\definecolor{brown}{rgb}{0.8,0.43,0.2}
\newcommand{\mfn}{\mathfrak{N}}
\newcommand{\mca}{\mathcal{A}}
\newtheorem{de}{Definition} 
\newtheorem{lemma}[de]{Lemma} 
\newtheorem{theo}[de]{Theorem} 
\newtheorem{corollary}[de]{Corollary} 
\newtheorem*{remark}{Remark} 
\newtheorem{example}{Example} 
\newcommand{\R}{\mathbb{R}}
\newcommand{\cA}{\mathcal{A}}
\newcommand{\cZ}{\mathcal{Z}}
\newcommand{\cX}{\mathcal{X}}
\newcommand{\act}{{\rm active}}
\newcommand{\con}{{\rm const}}
\newcommand{\conv}{{\rm conv}}
\newcommand{\Name}{barycenter}
\newcommand{\Nam}{Bar}
\DeclareMathOperator{\diam}{diam}
\DeclareMathOperator{\mpd}{mpd}
\begin{document}  
\title{Location problems with cutoff}
\author{Raoul M\"uller\footnote{Work supported by DFG RTG 2088.}\ \footnote{raoul.mueller@uni-goettingen.de}\ \footnote{Institute for Mathematical Stochastics, University of G\"ottingen, 37077 G\"ottingen, Germany.} \and Anita Sch\"obel\footnote{Technische Universität Kaiserslautern, Faculty for Mathematics, and Fraunhofer Institute for Industrial Mathematics ITWM, 67663 Kaiserslautern, Germany.} \and Dominic Schuhmacher\footnotemark[3]\\
}
\maketitle

\begin{abstract}
  In this paper we study a generalized version of the Weber problem of finding a point that minimizes the sum of its distances to a finite number of given points. 
  In our setting these distances may be \emph{cut off} at a given value $C > 0$, and we allow for the option of an \emph{empty} solution at a fixed cost $C'$. 
  We analyze under which circumstances these problems can be reduced to the simpler Weber problem, 
  and also when we definitely have to solve the more complex problem with cutoff.
  
  We furthermore present adaptions of the algorithm of [Drezner et al., 1991, \textit{Transpor\-ta\-tion Science} 25(3), 183--187] to our setting, which in certain situations are able to substantially reduce computation times as demonstrated in a simulation study.
  The sen\-si\-ti\-vi\-ty with respect to the cutoff value is also studied, which allows us to provide an algorithm that efficiently solves the problem simultaneously for all~$C>0$.
\end{abstract}

\parindent 0cm

\section{Introduction}\label{sec:Intro}

For a given finite set $\cA \subseteq\R^k$, a metric $d$ on $\R^k$ and some $q \geq 1$, we study the location problem
\begin{equation} \label{eq:the_problem}
  \min_{z \in \R^k} \sum_{a \in \cA} \min\{d(a,z)^q, C\},
\end{equation}

where $C>0$ is a cutoff parameter. Additionally we allow the
option \emph{not} to choose any location in $\R^k$ at a fixed cost per
point in $\mathcal{A}$.  Without the cutoff $C$ this problem is known
as \emph{Weber} problem, \emph{one-median} problem, \emph{minisum}
problem, \emph{Fermat-Torricelli} problem or (generalized)
\emph{barycenter} problem. In this paper we call an optimal
  solution to the problem a \emph{barycenter}. The barycenter problem
is among the best studied problems in location theory, see
\cite{LaporteNickelSaldanha} for recent
surveys of existing results and new developments in the field. Many
results exist for different metrics and for various
extensions. The problem \eqref{eq:the_problem} introduces the following two extensions to the classic problem.
\medskip

The first extension is the cutoff $C$ (as in \cite{drezner1991facility})
which makes the resulting barycenter more robust against outliers. For $q=1$ and e.g. $d=\ell_1$
this robustness is naturally given, but for other distances, outliers can have a huge effect on the location of the barycenter.
A barycenter can be thought of as a \emph{typical} representative of a given set of points. 
The robustness helps containing this representative property even if outliers are present. 

In the second extension we additionally allow the barycenter to be \emph{empty} at a fixed cost
which is 
constant per point in $\mathcal{A}$. This extends the representative property of the
barycenter. If the given points we want to represent are so scattered
that no single point can represent them, we allow for \emph{no representation}.
\medskip

Many application of the setting are possible where the cutoff $C$ and the possibility of having an empty barycenter come in naturally. An example is a community which has to decide
about building a new waste dump. Anyone can bring their domestic waste for free (but they have the cost of transportation, given by the distance to the waste dump) or have it collected for a fixed cost $C$. If no dump is built, the community has to pay a fixed fee per person to have their waste collected from the waste dump in a nearby city.
\medskip

In the paper we investigate the two extensions and compare their solutions to the solutions of the
classical problem. We identify cases in which solutions to the classical problem are still optimal for the
problem with cutoff and cases in which the empty barycenter is optimal. We also treat the cutoff value $C$ as part of the problem and investigate the sensitivity of an optimal solution w.r.t $C$.
\medskip

Algorithmically, the barycenter problem with cutoff has already been studied, see \cite{drezner1991facility},
\cite{aloise2012improved} and \cite{venkateshan2020note} resulting in an $\mathcal{O}(n^2)$-algorithm for $n$ being the number of existing points in $\mathcal{A}$. 
We refine this algorithm for the two extensions and experimentally show good
computation times.
\medskip

The remainder of the paper is organized as follows:
In the next section we formally introduce the barycenter problem and its two extensions referring to existing literature. 
In Section~\ref{sec:LocalStructure} we look at some universal properties of the cutoff that will be helpful, when we investigate the relation between the barycenter problem \emph{with} and \emph{without} cutoff in Section~\ref{sec:cutoff}. 
Here, we identify cases in which an optimal solution to the classic problem is also optimal for the problem with cutoff. 
Section~\ref{sec:emptyCenter} looks closer at the problem with \emph{empty} barycenter. 
We analyze in which cases the empty barycenter is the best solution. 
In Section~\ref{sec:variableC} we analyze the sensitivity of the barycenter and the objective function value in terms of the cutoff value. 
Section~\ref{sec:applications} sketches an application from statistical data analysis, where the barycenter problem with cutoff and empty set occurs as a subproblem when we compute a ``typical'' point pattern based on a given set of point patterns. 
In Section~\ref{sec:Simulation} we present a simulation study to compare the runtime of the different algorithms. 
The paper ends with some discussions and outlook to further research.

\section{Extensions of the \Name\ location problem: cutoff and empty barycenter}

From now on we will always assume that we are give a finite set of locations $\cA \subseteq\R^k$, $|\mathcal{A}| = n \in \mathbb{N}$. 
The \emph{diameter of $\mathcal{A}$} $$\diam(\mathcal{A}) := \underset{a_1,a_2 \in \mathcal{A}}{\max} d(a_1,a_2)$$ is the maximum distance between two points of $\mathcal{A}$. 
For technical reasons we assume that $\diam(\mathbb{R}^k) = \infty$. 
In this paper we mainly consider \emph{norm-metrics}, i.e., distances 
\[ d^{q}(x,y)= (d(x,y))^q = \|y-x\|^q, \ x,y \in \R^k, q \geq1 \]
\emph{derived from a norm} $\| \cdot \|$ (and here in particular the Euclidean norm
$\ell_2$ and the Manhattan norm $\ell_1$), but many results are also true for general metrics $d$.
When we consider $\ell_p$ norms we allow $p \in [1,\infty]$, so the \emph{maximum norm} is permitted.

\subsubsection*{\eqref{classic}: The barycenter problem}
The classic location problem is to find a point 
$x \in \R^k$ which minimizes the sum of distances to the given points in $\cA$:
\begin{equation*}
\label{classic}
  \mathcal{Z}^\ast := \min_{x \in \R^k} f(x,\cA):=\sum_{a \in \cA} d^{q}(x,a) \tag{\Nam$(\cA)$}. 
\end{equation*}
We call this problem \emph{\Name\ problem} and denote its set of optimal
solutions by $\cX^*$.
If it is clear to which set $\cA$ we refer to we may write $f(x)$ for its objective
function instead of $f(x,\cA)$.

$(\Nam)$ has already been introduced in the 17th century by Fermat for three points $a_1$,$a_2$,
and $a_3$ and for $n$ weighted facilities by Weber in 1909, see, e.g., the survey
\cite{DKSW01}. Actual research concerns versions with $p$ 
facilities \cite{Mladenovich07,Drezetal15,MarinPelegrin20},
barriers \cite{kathrinhabil}, obnoxious facility location \cite{DreDreSch18},
different types of facilities to be placed \cite{MSsurvey,Sch20},
ordered median location problems \cite{Nickel-Puerto,PuertoRodri20}, location under uncertainty \cite{CorreiaSaldanha20}, and others, see \cite{LaporteNickelSaldanha} and references therein for a recent overview. 
Here, we consider the following two extensions of (\Nam).

\subsubsection*{\eqref{cutoff}: The barycenter problem with cutoff}
The first extension we consider is to introduce a cutoff in the distance
function: Given a \emph{cutoff value} $C >0$, we look at the cutoff distance function
\begin{equation}
  d^{q}_{C}(x,y):=\min\{d^{q}(x,y),C\}, \ x,y \in \R^k,
\end{equation}
i.e., the distance is not increased any more once it has reached the
value $C$. The corresponding location problem is given as
\begin{equation}
\label{cutoff}
\mathcal{Z}_C^\ast := \min_{x \in \R^k} f_C(x,\cA):=\sum_{a \in \cA} \min\{d^{q}(x,a), C\}. \tag{\Nam$_C(\cA)$}
\end{equation}
It is called \emph{\Name\ problem with cutoff}. We denote its set of
optimal solutions by $\cX^*_C$. Again, if the set $\mathcal{A}$ is known, we may write $f_C(x)$ instead of $f_C(x,\cA)$. The problem is a special case of the \emph{Weber problem   with limited distance} from \cite{drezner1991facility}.
The latter problem allows different cutoff values $\lambda_i$
for each of the existing facilities while in (\Nam$_C$) all \mbox{$\lambda_i=C$}.
It has also been studied in \cite{aloise2012improved}
and \cite{venkateshan2020note}. Recently, \eqref{cutoff} has been
investigated within a statistical application, namely for finding barycenters for
point patterns, see \cite{muller2020metrics} or Section~\ref{sec:applications}.
At the end of Section~\ref{sec:LocalStructure} we
present the algorithm of \cite{drezner1991facility} for solving the Weber problem with limited distances.

Related work includes \cite{fernandes2017polynomial} where the authors
consider a discrete version of a barycenter problem in which they restrict how many existing points have to be within the cutoff distance. The problem is solved by a global optimization algorithm
based on a decomposition of the plane into regions for which we know which given points
are within the cutoff value $C$. A reversed approach in which one tries to cover as many points as 
possible within a given threshold value $C$ and measures only the distance to the non-covered points is 
investigated in \cite{BJKS13}. 
\medskip

Note that the cutoff does not change the properties of the distances.
Definiteness, symmetry and triangle inequality are still satisfied.
\medskip

\begin{lemma}[\cite{muller2020metrics}]
  If $d$ is a metric then $d_C$ is also a metric. 
\end{lemma}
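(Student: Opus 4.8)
The plan is to verify the three defining properties of a metric for $d_C(x,y) = \min\{d(x,y), C\}$ directly from the corresponding properties of $d$, exploiting throughout that $C > 0$. Non-negativity is immediate, since $d_C$ is the minimum of two non-negative quantities. For definiteness, note that because $C > 0$ we have $d_C(x,y) = 0$ if and only if $d(x,y) = 0$, which by definiteness of $d$ holds if and only if $x = y$; here the strict positivity of the cutoff is exactly what rules out the degenerate situation. Symmetry follows because $d(x,y) = d(y,x)$ forces $\min\{d(x,y), C\} = \min\{d(y,x), C\}$.

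The only nontrivial point, and the one I expect to require the most care, is the triangle inequality: for all $x, y, z$ we must show $\min\{d(x,z), C\} \le \min\{d(x,y), C\} + \min\{d(y,z), C\}$. I would abbreviate $a = d(x,y)$ and $b = d(y,z)$, and combine the triangle inequality $d(x,z) \le a + b$ for $d$ with the fact that $t \mapsto \min\{t, C\}$ is non-decreasing. This reduces the claim to the subadditivity statement $\min\{a+b, C\} \le \min\{a, C\} + \min\{b, C\}$.

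To establish this subadditivity I would distinguish two cases. If both $a \le C$ and $b \le C$, then the right-hand side equals $a + b \ge \min\{a+b, C\}$, and we are done. Otherwise at least one summand, say $a$, exceeds $C$; then $\min\{a, C\} = C$, and since the remaining term is non-negative the right-hand side is at least $C \ge \min\{a+b, C\}$. The symmetric argument covers the case $b > C$, completing the triangle inequality.

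Alternatively, I could phrase the entire argument through the general principle that $\phi \circ d$ is a metric whenever $\phi \colon [0,\infty) \to [0,\infty)$ is non-decreasing and subadditive and satisfies $\phi(t) = 0$ exactly when $t = 0$; the cutoff map $\phi(t) = \min\{t, C\}$ plainly has all these properties for $C > 0$. Either way the substance lies entirely in the short case distinction above, so I anticipate no real obstacle beyond stating that elementary inequality cleanly.
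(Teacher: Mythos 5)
Your proof is correct: the paper itself gives no proof of this lemma (it is cited from the reference), and your argument --- reducing the triangle inequality to the monotonicity and subadditivity of $t \mapsto \min\{t,C\}$, verified by the two-case distinction --- is the standard and complete way to establish it. The observation that definiteness relies on $C>0$ is also the right point to flag.
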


\subsubsection*{\eqref{empty}: The empty barycenter as an option.}
  
The largest distance to the new facility in \eqref{cutoff} is bounded by the cutoff value $C$. 
In the second extension we go a step further and allow to place
no facility (represented as $x=\emptyset$). In this case, each demand point
$a \in \cA$ has to pay a price of $C':=\alpha \cdot C$ for some given $\alpha>0$.
In order to formulate this setting as location problem, we extend the metric space by the empty set $\emptyset$ for which we define
a constant ``distance'' between $x=\emptyset$ and any other
point $y\in \mathbb{R}^k \cup \emptyset$, namely
\[ d^q_{C,\alpha}(\emptyset,y)=\left\{ \begin{array}{ll}
                              \alpha \cdot C & \mbox{ if } y \not= \emptyset\\
                              0  & \mbox{ if } y = \emptyset
                                       \end{array} \right. \]
and leave $d^q_{C,\alpha}(x,y)=d^q_{C}(x,y)$ for all $x,y \not= \emptyset$.
The corresponding location problem
\begin{equation}
\label{empty}
\mathcal{Z}_{C,\alpha}^\ast := \min_{x \in \R^k \cup \{\emptyset\}} f_{C,\alpha}(x,\cA):=\sum_{a \in \cA} d^{q}_{C,\alpha}(x,a),
\tag{\Nam$_{C,\alpha}(\cA)$}
\end{equation}
is called \emph{\Name\ problem with empty set (and cutoff)}. We denote its set of
optimal solutions by $\cX^*_{C,\alpha}$ and call $\xi^\ast = \emptyset$ the \emph{empty barycenter}. 
The problem has recently been introduced and motivated in
\cite{muller2020metrics} but to the best of our knowledge otherwises not been studied.
\medskip

Adding the empty barycenter to the metric space with cutoff distance $d_C$
  does not change the properties of the metric space if $\alpha \geq \frac{1}{2}$.
\medskip

\begin{lemma}\label{lem:MetricSpaceWithEmptyset}
  $M^\prime = (\mathbb{R}^k \cup \emptyset, d_{C,\alpha})$ is a metric space if and only if $\alpha \geq \frac{1}{2}$.
\end{lemma}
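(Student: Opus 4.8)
The plan is to verify the three metric axioms, treating $\emptyset$ as the only genuinely new element. Since the previous lemma already tells us that $d_C$ is a metric on $\mathbb{R}^k$, every axiom is automatic for pairs and triples drawn entirely from $\mathbb{R}^k$, so I only need to examine configurations that involve $\emptyset$.

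First I would dispose of definiteness and symmetry. For any $y \neq \emptyset$ we have $d_{C,\alpha}(\emptyset,y) = \alpha C > 0$ because $\alpha, C > 0$, while $d_{C,\alpha}(\emptyset,\emptyset)=0$; combined with definiteness of $d_C$ this gives $d_{C,\alpha}(x,y)=0 \iff x=y$. Symmetry holds by the very definition of $d_{C,\alpha}$, since the constant value $\alpha C$ does not depend on the order of its arguments. Note that both axioms hold for \emph{every} $\alpha > 0$, so neither can be responsible for the threshold $\tfrac12$.

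The real content therefore lies in the triangle inequality, and the threshold must come from the triples containing $\emptyset$. There are essentially two shapes to check. When $\emptyset$ sits at an endpoint, e.g.\ bounding $d_{C,\alpha}(\emptyset,y)$ by $d_{C,\alpha}(\emptyset,x)+d_{C,\alpha}(x,y) = \alpha C + d_C(x,y)$, the inequality $\alpha C \leq \alpha C + d_C(x,y)$ is trivial. The decisive case is when $\emptyset$ sits in the middle, where I must show
\[ d_C(x,y) \leq d_{C,\alpha}(x,\emptyset) + d_{C,\alpha}(\emptyset,y) = 2\alpha C \quad \text{for all } x,y \in \mathbb{R}^k. \]
Using the cutoff bound $d_C(x,y) = \min\{d^q(x,y),C\} \leq C$, this holds as soon as $C \leq 2\alpha C$, i.e.\ $\alpha \geq \tfrac12$, which establishes the \textquotedblleft if\textquotedblright\ direction.

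For the converse I would exhibit a violating triple when $\alpha < \tfrac12$. Because $\diam(\mathbb{R}^k)=\infty$, I can pick $x,y$ with $d^q(x,y) \geq C$, so that $d_C(x,y)=C$; then $d_{C,\alpha}(x,\emptyset)+d_{C,\alpha}(\emptyset,y)=2\alpha C < C = d_C(x,y)$, and the triangle inequality fails. The only mildly delicate point is the bookkeeping of which of the triangle inequalities in each triple is binding; once one sees that the sole non-automatic instance is the \textquotedblleft$\emptyset$ in the middle\textquotedblright\ one, the equivalence with $\alpha \geq \tfrac12$ is immediate from the cutoff bound and the infinite diameter of $\mathbb{R}^k$.
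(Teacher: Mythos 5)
Your proof is correct and follows essentially the same route as the paper's: verify definiteness and symmetry directly, reduce the triangle inequality to the single nontrivial case with $\emptyset$ in the middle, obtain the ``if'' direction from $d_C(x,z) \leq C \leq 2\alpha C$, and refute the ``only if'' direction by choosing two points at cutoff distance. No gaps.
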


\begin{proof}
  The definiteness and the symmetry of the metric $d_{C,\alpha}$ directly hold
  also for $\emptyset$. The triangle inequality
  \begin{equation}
  \label{triangle}  
    d_{C,\alpha}(x,y) + d_{C,\alpha}(y,z) \geq d_{C,\alpha}(x,z)
  \end{equation}
  can be shown by checking all possible cases:
  \begin{itemize}
    \item If $x,y,z \in M$, \eqref{triangle} is satisfied since $d_C$ is a metric.
    \item If $x=y=z=\emptyset$, or if exactly two of the three points $x,y,z$ are $\emptyset$, \eqref{triangle} follows directly from the definition of $d_{C,\alpha}$.
    \item For only $x=\emptyset$ the triangle inequality holds since $d_{C,\alpha}(y,z) \geq 0$.
      The same holds for $z=\emptyset$.
\end{itemize}
We are left with the case that $y=\emptyset$ and $x,z \in M$. In this case, \eqref{triangle}
transfers to 
\begin{equation}
\label{eq-triangle}  
  \alpha C+ \alpha C\geq d_{C,\alpha}(x,z) = d_C(x,z).
\end{equation}
We have to show two directions:
\begin{description}
\item{$\Longrightarrow$}
  Let \eqref{eq-triangle} hold for all $x,z \in \mathbb{R}^k$. Choose $x,z$ with $d(x,z)>C$, i.e.,
  $d_C(x,z)=C$. Then
  we receive $2 \alpha C \geq C$, i.e., $\alpha \geq \frac{1}{2}$.
\item{$\Longleftarrow$}  
  Let $\alpha \geq \frac{1}{2}$. Then we have that $d_C(x,z) \leq C
  \leq 2 \alpha C$ and \eqref{eq-triangle} is satisfied.
  \end{description}
\end{proof}

Note that the proof also shows that for a strictly increasing metric $d$ (such as $\ell_1$ or $\ell_2$) without cutoff, 
$(\mathbb{R}^k \cup \{\emptyset\},d_{\infty,\alpha})$ never is a metric space since \eqref{triangle} is always
violated for $y = \emptyset$ and $d(x,z)>2 \alpha C$. 
This is the reason why we do not treat location problems with empty set, but without cutoff. 

\subsection*{Relations between \eqref{classic}, \eqref{cutoff}, and \eqref{empty}}

We summarize a few observations on the relations between the optimal
values of the three problems.
\medskip

\begin{lemma}\label{lem:UpperBounds}
  We always have
  \begin{itemize}
  \item [(i)] $\mathcal{Z}_{C,\alpha}^\ast \leq \mathcal{Z}_C^\ast \leq \mathcal{Z}^\ast$
    \item [(ii)] $\mathcal{Z}_C^\ast \leq (n -1) \cdot C$
    \item [(iii)] $\mathcal{Z}_{C,\alpha}^\ast \leq
       C \cdot \min \{ n -1, n \cdot \alpha\}$.
  \end{itemize}
\end{lemma}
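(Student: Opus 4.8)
The plan is to establish each inequality by exhibiting a concrete feasible point whose objective value already realizes the claimed bound, exploiting that all three problems are minimizations over (nested) feasible domains. No auxiliary machinery is needed; everything reduces to one-line substitutions into the respective objective functions.

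For part (i), I would separate the two inequalities because they rest on conceptually different mechanisms. The inequality $\mathcal{Z}_C^\ast \leq \mathcal{Z}^\ast$ is a \emph{pointwise domination} of the objectives: for every $x \in \R^k$ and every $a \in \cA$ we have $\min\{d^q(x,a),C\} \leq d^q(x,a)$, hence $f_C(x) \leq f(x)$ for all $x$, and minimizing over the common domain preserves the inequality. The inequality $\mathcal{Z}_{C,\alpha}^\ast \leq \mathcal{Z}_C^\ast$, by contrast, comes from an \emph{enlargement of the feasible set}: since $f_{C,\alpha}$ restricted to $\R^k$ coincides with $f_C$ (by definition $d^q_{C,\alpha}(x,a)=d^q_C(x,a)$ for $x\neq\emptyset$), minimizing over the larger domain $\R^k \cup \{\emptyset\}$ can only lower the optimal value.

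For part (ii), I would evaluate $f_C$ at an arbitrary existing point $a_0 \in \cA$. The summand for $a_0$ itself vanishes since $d^q(a_0,a_0)=0$, while each of the remaining $n-1$ summands is at most $C$ because of the cutoff; hence $f_C(a_0) \leq (n-1)C$, and $\mathcal{Z}_C^\ast \leq f_C(a_0)$ finishes the bound. For part (iii), the two candidates inside the minimum correspond to the two natural feasible choices. The bound $(n-1)C$ is inherited directly from (i) and (ii) via $\mathcal{Z}_{C,\alpha}^\ast \leq \mathcal{Z}_C^\ast \leq (n-1)C$, while the bound $n\alpha C$ follows by evaluating the objective at the empty barycenter $x=\emptyset$, where each of the $n$ points contributes exactly $\alpha C$, so that $f_{C,\alpha}(\emptyset)=n\alpha C$. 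Taking the smaller of the two feasible values yields $\mathcal{Z}_{C,\alpha}^\ast \leq C\cdot\min\{n-1,\,n\alpha\}$.

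I do not expect a genuine obstacle here, since every step is an elementary substitution. The only point requiring a little care is keeping the two arguments in part (i) distinct, as the pointwise-domination reasoning and the feasible-set-enlargement reasoning are different in kind even though both are immediate.
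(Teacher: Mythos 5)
Your proof is correct and follows essentially the same route as the paper: pointwise domination of the objectives plus enlargement of the feasible set for (i), evaluation at an existing point for (ii), and evaluation at the empty barycenter combined with (i) and (ii) for (iii). No discrepancies.
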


\begin{proof} \
  \begin{itemize}
  \item [(i)]
    Since $d^q_C(x,y) \leq d^q(x,y)$ we get $f_C(x) \leq f(x)$ for all $x \in \mathbb{R}^k$, hence also
      $\min_{x \in \mathbb{R}^k} f_C(x) \leq \min_{x \in \mathbb{R}^k} f(x)$ and $\mathcal{Z}_C^\ast \leq \mathcal{Z}^\ast$ holds.

      Furthermore, the empty barycenter increases the set of feasible solutions, i.e., \eqref{empty} is
      a relaxation of \eqref{cutoff}. We conclude  $\mathcal{Z}_{C,\alpha}^\ast \leq \mathcal{Z}_C^\ast$.
  \item [(ii)]
      Let $a \in \mathcal{A}$. This is a feasible barycenter with objective value of
      $f_C(a)=\sum_{a' \in \mathcal{A}} d^q_C(a,a') \leq 0 + (n-1)\cdot C$, hence an
      upper bound on \eqref{cutoff}.
    \item [(iii)]
      The empty barycenter is feasible and has an objective value of $f_{C,\alpha}=n \cdot \alpha \cdot C$, hence an upper bound on \eqref{empty}. Together with (i) and (ii), the result follows.
  \end{itemize}
\end{proof}

\section{Exploiting the local structure of \Nam$_C$} \label{sec:LocalStructure}

We start with some general properties of the cutoff. To this end we need some further notation.
\begin{de}
  Let $x \in \R^k$. Then 
  \begin{itemize}
  \item $\act_{C}(x):=\{a \in \cA: d^{q}(x,a) \leq C\}$ denotes the \emph{active points} w.r.t $x$ and $C$, and
  \item $\con_{C}(x):=\cA \setminus \act_{C}(x)$ denotes the \emph{constant points} w.r.t $x$ and $C$, i.e., the points whose distances remain locally constant.
  \end{itemize}  
  When we know the value of $C$ we just write $\act(x)$ and $\con(x)$.
\end{de}

We can now split the objective function into an active and a constant part,
\begin{eqnarray}
  f_C(x,\cA) & = & f_C(x,\act_{C}(x)) + f_C(x,\con_{C}(x)) \nonumber \\
            & = & \sum_{a \in \act_{C}(x)} d^{q}(x,a) + C \cdot |\con_{C}(x)| \nonumber\\
            & = & f(x,\act_{C}(x)) + C \cdot |\con_{C}(x)| \label{eq-split}.               
\end{eqnarray}           

This decomposition gives us a first basic result showing that the \Name\
problem with cutoff is equivalent
to a problem of type (\Nam), but w.r.t a subset of the existing points.

The following Lemma is an extension of Lemma $2$ of \cite{drezner1991facility}, who proved this result for $d\in \lbrace \ell_1, \ell_2 \rbrace$, but it is visible that the proof works more generally. For the sake of completeness we present a proof for any metric $d$ and any $q \geq 1$.

\begin{lemma}
  \label{lemma1}
  Let $\xi^\ast \in \cX^*_C$ be an optimal solution to (\Nam$_C(\cA)$). Then the following hold:
  \begin{enumerate}
    \item[(i)]  $\xi^\ast$ is an optimal solution to {\rm (\Nam($\act(\xi^\ast)$))}.
    \item[(ii)] All optimal solutions for {\rm (\Nam($\act(\xi^\ast)$))}
      are optimal solutions to     {\rm (\Nam$_C(\cA)$)}
      i.e., $\cX^*(\act(\xi^\ast)) \subseteq \cX^*_C(\cA)$.
  \end{enumerate}    
\end{lemma}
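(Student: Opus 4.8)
The plan is to base everything on a single uniform inequality that bounds the cutoff objective by a restricted Weber objective plus a constant. Writing $A^\ast := \act(\xi^\ast)$ and $m := |\con(\xi^\ast)|$, I would first observe that for \emph{every} $x \in \R^k$ one has
$$ f_C(x,\cA) \;\leq\; f(x,A^\ast) + C\cdot m. $$
This follows by splitting the sum defining $f_C(x,\cA)$ over $A^\ast$ and $\con(\xi^\ast)$ and bounding each term separately: for $a \in A^\ast$ we use $\min\{d^{q}(x,a),C\} \leq d^{q}(x,a)$, and for $a \in \con(\xi^\ast)$ we use $\min\{d^{q}(x,a),C\} \leq C$. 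Crucially, at the specific point $x = \xi^\ast$ this inequality becomes an \emph{equality}: by definition of $\act$ and $\con$, every active point satisfies $d^{q}(\xi^\ast,a)\leq C$ and every constant point contributes exactly $C$, which is precisely the decomposition \eqref{eq-split}. This asymmetry — inequality everywhere, equality at $\xi^\ast$ — is the whole engine of the argument.

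For part (i), I would argue by contradiction. Suppose $\xi^\ast$ were \emph{not} optimal for {\rm (\Nam($A^\ast$))}, so that some $y$ satisfies $f(y,A^\ast) < f(\xi^\ast,A^\ast)$. Feeding $y$ into the uniform inequality and $\xi^\ast$ into the equality gives
$$ f_C(y,\cA) \;\leq\; f(y,A^\ast)+C\cdot m \;<\; f(\xi^\ast,A^\ast)+C\cdot m \;=\; f_C(\xi^\ast,\cA), $$
contradicting the assumed optimality of $\xi^\ast$ for {\rm (\Nam$_C(\cA)$)}. Hence $\xi^\ast$ minimizes $f(\cdot,A^\ast)$.

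For part (ii), let $y \in \cX^\ast(A^\ast)$ be any optimal solution of the restricted Weber problem. By part (i) we have $f(y,A^\ast) = f(\xi^\ast,A^\ast)$, so the uniform inequality applied at $y$ together with the equality at $\xi^\ast$ yields
$$ f_C(y,\cA) \;\leq\; f(y,A^\ast)+C\cdot m \;=\; f(\xi^\ast,A^\ast)+C\cdot m \;=\; f_C(\xi^\ast,\cA) \;=\; \mathcal{Z}_C^\ast. $$
Since $\mathcal{Z}_C^\ast$ is the minimal value of $f_C(\cdot,\cA)$, the reverse bound $f_C(y,\cA)\geq \mathcal{Z}_C^\ast$ holds automatically, forcing equality and hence $y \in \cX^\ast_C(\cA)$.

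I do not expect a genuine obstacle here; the only thing to guard against is the tempting but false belief that $f_C(x,\cA) = f(x,A^\ast)+C\cdot m$ holds for \emph{all} $x$. It does not — moving away from $\xi^\ast$ can change which points lie within distance $C$, so a point active at $\xi^\ast$ may become constant and vice versa. One must therefore be disciplined about invoking only the \emph{inequality} off of $\xi^\ast$ and reserving the equality for $x=\xi^\ast$ itself, where the active/constant split is the one used to define $A^\ast$ and $m$.
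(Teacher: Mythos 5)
Your proof is correct and follows essentially the same route as the paper: the same split of $f_C(\cdot,\cA)$ over $\act(\xi^\ast)$ and $\con(\xi^\ast)$, the same contradiction for (i), and the same two-part comparison for (ii). If anything, your explicit warning that the decomposition \eqref{eq-split} holds with equality only at $x=\xi^\ast$ (and only as an inequality elsewhere) makes precise a step the paper's proof of (i) leaves implicit when it invokes \eqref{eq-split} at the point $y$.
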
  

\begin{proof} \
  \begin{enumerate}
    \item[ad (i)]
Let $\xi^\ast \in \R^k$ be a minimizer of $f_C(x,\cA)$, but assume $f(y,\act(\xi^\ast)) < f(\xi^\ast,\act(\xi^\ast))$
for some $y \in \R^k$. Due to \eqref{eq-split} we then receive $f_C(y,\cA) < f_C(\xi^\ast, \cA)$, a
  contradiction to the optimality of $\xi^\ast$.
\item[ad (ii)]
  For the second statement, take $\eta^\ast \in \cX^*(\act(\xi^\ast))$. We consider $\act(\xi^\ast)$ and
  $\con(\xi^\ast)=\cA \setminus \act(\xi^\ast)$ separately:
\begin{eqnarray*}
  f_C(\xi^\ast,\act(\xi^\ast)) &=& f(\xi^\ast,\act(\xi^\ast)) = f(\eta^\ast, \act(\xi^\ast)) \ \geq \ f_C(\eta^\ast,\act(\xi^\ast))\\
  f_C(\xi^\ast,\con(\xi^\ast)) &=& \sum_{a \in \con(\xi^\ast)} C \ \geq \ \sum_{a \in \con(\xi^\ast)} \min\{d^{q}(\eta^\ast,a),C\} \ = \ f_C(\eta^\ast,\con(\xi^\ast))
\end{eqnarray*}
and together we receive that $f_C(\eta^\ast,\cA) \leq f_C(\xi^\ast,\cA)$, hence $\eta^\ast$ is also optimal.
\end{enumerate}
\end{proof}

This result is one of the main ideas needed for Algorithm~\ref{algo:Drezner} and its improved versions
which are described next. We state the approach of \cite{drezner1991facility} for our special case
of cut off distances. Note that the versions of \cite{aloise2012improved} and \cite{venkateshan2020note}
are not relevant for this setting.
\medskip

\SetKwInOut{input}{Input}\SetKwInOut{output}{Output}
\begin{algorithm}[ht]
  \caption{Algorithm for \eqref{cutoff}, based on \cite{drezner1991facility}.}
  \label{algo:Drezner}  
  \input{Set $\mathcal{A} = \lbrace a_1, \ldots, a_n \rbrace$, cutoff $C>0$}
  \output{A barycenter $\xi^{\ast}$ of $\eqref{cutoff}$, objective function value $\mathcal{Z}_C^\ast$}
  \BlankLine
  Set $\xi^{\ast} \leftarrow a_1$, $\mathcal{Z}_C^\ast \leftarrow \infty$\;
  \For{$i \leftarrow 1$ \KwTo $(n-1)$}{
    Inner Loop\;
  }
  \Return{$\xi^{\ast}, \mathcal{Z}_C^\ast$}\ 
  
  \BlankLine 
  \nonl \hrulefill \ 

  \nonl Inner Loop:\ 
  
  \For{$j \leftarrow (i+1)$ \KwTo $n$}{
    \If{$d^q(a_i,a_j) \leq 2^q\cdot C$}{
      Compute the centers $c_1, c_2$ of the balls with radius $C$ that fulfill $d^q(c_1,a_i) = d^q(c_1,a_j) = d^q(c_2,a_i) = d^q(c_2,a_j) = C$\;
      \For{$k \leftarrow 1$ \KwTo $2$}{
        Set $S := \lbrace a \in \mathcal{A}\ \vert \ d^q(c_k,a) \leq C \rbrace$\;
        Compute for the four sets $S, S\setminus \lbrace a_i \rbrace, S\setminus \lbrace a_j \rbrace, S\setminus \lbrace a_i, a_j \rbrace$ the barycenters $\xi_1, \ldots \xi_4$ and the corresponding objective function values $d_1, \ldots d_4$\;
        $l \leftarrow \underset{l \in \lbrace 1,\ldots, 4 \rbrace}{\min}  d_l$\;
        \If{$d_l < \mathcal{Z}_C^\ast$}{
          $\mathcal{Z}_C^\ast \leftarrow d_l$, $\xi^{\ast} \leftarrow \xi_l$\;
        }
      }
    }
  }
\end{algorithm}

The following observations are true in the plane. 
We know from Lemma~\ref{lemma1} that any optimal solution to \eqref{cutoff} is a solution to
(\Nam($A$)) for a subset $A \subseteq \mathcal{A}$. Algorithm~\ref{algo:Drezner} uses brute force
to calculate the optimal solutions for these subsets. But instead of enumerating all
theoretically possible $2^{n}-1$ subsets, \cite{drezner1991facility}
use the following geometric observation to cut down the
number of subsets to look at: Say we have an optimal solution $\xi^\ast \in \mathcal{X}^\ast_C$.
Set $A := \act(\xi^\ast)$. $A$ is contained in a ($2$-dimensional) ball around $\xi^\ast$ with radius $C$. This
ball can be "moved" so that two points, $a_1,a_2$ of $\mathcal{A}$ lie on the circumference of the
ball and all points of $A$ are still inside. We hence can restrict our search to all balls with radius $C$
that are defined by two points of $\mathcal{A}$ on its circumference.
\cite{drezner1991facility} proved that there are at most $\mathcal{O}(n^2)$ of these balls, so we only need to solve (\Nam($A$)) for $\mathcal{O}(n^2)$ subsets.
The arguments of the proof hold for all norm metrics $d$ and all $q \geq 1$, although \cite{drezner1991facility} did not state these cases explicitly.

\begin{theo}[\cite{drezner1991facility}]
  Let $\mathcal{A} \subseteq \mathbb{R}^2$, let $d$ be a norm-metric and say we can solve \eqref{classic} in $h(n)$ time. 
  Then Algorithm~\ref{algo:Drezner} solves the problem \eqref{cutoff} in $\mathcal{O}(n^2\cdot h(n))$ time.
\end{theo}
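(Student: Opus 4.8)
The plan is to split the statement into a correctness part (Algorithm~\ref{algo:Drezner} returns an optimal solution of \eqref{cutoff}) and a complexity part (it runs in $\mathcal{O}(n^2\cdot h(n))$ time), using Lemma~\ref{lemma1} as the bridge between the two problems. Writing $r:=C^{1/q}$, the geometric core is the arrangement of the $n$ spheres $\partial B(a,r)=\{x:d^q(x,a)=C\}$, $a\in\cA$, in the plane. On each face (a maximal connected region meeting no sphere) the set $\act(x)$ is constant, since it changes only when $x$ crosses some $\partial B(a,r)$. Hence it suffices to enumerate enough faces that the one carrying an optimal active set is reached, and to solve \eqref{classic} on the corresponding subset.

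For correctness I would fix an optimal $\xi^\ast\in\cX^*_C$ and set $A:=\act(\xi^\ast)$. By Lemma~\ref{lem:UpperBounds}(ii) we have $\mathcal{Z}_C^\ast\le(n-1)C<nC$, so $A\neq\emptyset$; the cases $|A|\le 1$ amount to placing the facility at a single point of $\cA$ and are covered by the trivial candidates, so assume $|A|\ge 2$. Then $A$ equals $\act$ on some bounded face $F$ of the arrangement, and since $F$ lies inside the intersection of at least two of the disks $\bar B(a_i,r),\bar B(a_j,r)$ (two distinct radius-$r$ disks are never nested), its closure contains a vertex $c$, i.e.\ a common point of $\partial B(a_i,r)$ and $\partial B(a_j,r)$. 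This $c$ is one of the centers $c_1,c_2$ computed for the pair $(a_i,a_j)$, and $d(a_i,a_j)\le 2r$, equivalently $d^q(a_i,a_j)\le 2^q C$, so the pair passes the inner-loop test. Crossing $\partial B(a_i,r)$ toggles membership of $a_i$ and crossing $\partial B(a_j,r)$ toggles that of $a_j$, so the four faces incident to $c$ carry exactly the active sets $S,\,S\setminus\{a_i\},\,S\setminus\{a_j\},\,S\setminus\{a_i,a_j\}$ with $S:=\act(c)$; in particular $A$ is one of them. The algorithm computes a \eqref{classic}-optimal point $\xi_l$ for $T=A$, which by Lemma~\ref{lemma1}(ii) is optimal for \eqref{cutoff}, so its value attains $\mathcal{Z}_C^\ast$. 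As every candidate satisfies $d_l\ge f_C(\xi_l,\cA)\ge\mathcal{Z}_C^\ast$, with equality for $T=A$, the running minimum returned equals $\mathcal{Z}_C^\ast$.

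For the complexity I would count the work directly: the inner loop ranges over the $\binom{n}{2}=\mathcal{O}(n^2)$ pairs; for each admissible pair the two centers are obtained in $\mathcal{O}(1)$ by intersecting two spheres, the set $S=\act(c_k)$ is formed in $\mathcal{O}(n)$, the four \eqref{classic}-instances are solved in $4\,h(n)=\mathcal{O}(h(n))$, and the four objective values are evaluated in $\mathcal{O}(n)$. Since forming $S$ and evaluating $f_C$ already read all $n$ points, $h(n)=\Omega(n)$, so the per-pair cost is $\mathcal{O}(h(n))$ and the total is $\mathcal{O}(n^2\cdot h(n))$.

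The hard part will be the geometric lemma behind the enumeration, namely that inspecting pairwise sphere intersections together with the four incident active sets is enough. Two points need care. First, the bound of $\mathcal{O}(n^2)$ candidate centers and the clean four-face description at a vertex rest on two spheres meeting transversally in at most two points; this is immediate for strictly convex norms such as $\ell_2$, but for $\ell_1$ and $\ell_\infty$ the unit spheres are polygons whose translates can overlap along segments, so one must argue that such degenerate vertices are perturbable (or directly handled) without inflating the asymptotics. Second, the corner cases $|A|\le 1$ and the initialization of the running optimum must be arranged so that no configuration is missed, in particular one in which all pairwise distances exceed $2r$; these are subsumed by additionally evaluating single-point placements, which does not affect the $\mathcal{O}(n^2\cdot h(n))$ bound.
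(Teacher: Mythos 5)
Your proof is correct and follows essentially the same strategy as the paper's: Lemma~\ref{lemma1} reduces \eqref{cutoff} to Weber problems on candidate active sets, and the $\mathcal{O}(n^2)$ enumeration of those sets via pairs of points at mutual distance at most $2\sqrt[q]{C}$ is exactly the argument of \cite{drezner1991facility} that the paper cites rather than reproves --- you simply supply the arrangement-of-spheres details (constancy of $\act(\cdot)$ on faces, existence of a vertex on the closure of the relevant face, the four incident active sets $S$, $S\setminus\{a_i\}$, $S\setminus\{a_j\}$, $S\setminus\{a_i,a_j\}$) that the paper outsources, together with the standard general-position caveats for non-strictly-convex norms. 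Your remark about initialization is moreover a genuine catch: as written, Algorithm~\ref{algo:Drezner} starts with $\mathcal{Z}_C^\ast \leftarrow \infty$ and would return an infinite value whenever no pair satisfies $d^q(a_i,a_j)\leq 2^qC$, so the fix of additionally evaluating single-point placements (equivalently, initializing $\mathcal{Z}_C^\ast$ with $f_C(a_1,\mathcal{A})$) is needed for full correctness and does not affect the stated complexity.
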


\begin{proof}
  The result was proven in \cite{drezner1991facility} for $d \in \{\ell_1,\ell_2\}$. The proof is based on two arguments. First the solution of \eqref{cutoff} is a solution to (Bar(A)) for some $A \subseteq \mathcal{A}$. And second the number of these subsets $A$ we need to check for the optimal solution is of order $n^2$. 
  Lemma~\ref{lemma1} states the first argument for any metric $d$ and any $q \geq 1$. 
  And the second argument follows directly from the proof of Theorem 1 in \cite{drezner1991facility}. The argument in the proof works for any ball defined by a norm-metric $d$. A ball that is defined by $d^q$ only differs in its radius from a ball that is defined by $d$. So the number of candidate subsets $A$ is of order $n^2$ for any norm-metric $d$ and any $q \geq 1$. 
\end{proof}  

\begin{remark}
  Algorithm~\ref{algo:Drezner} is presented only for finite subsets $\mathcal{A}$ of $\mathbb{R}^2$. 
  The method of cutting down the number of $2^n -1$ theoretically possible subsets of $\mathcal{A}$ to a polynomial number of subsets also works in $\mathbb{R}^k$ for $k\geq 3$.
  The $k$-dimensional ball with radius $C$ is uniquely defined by $k$ points that define a $k-1$ dimensional hyperplane.
  For $k=2$ we need two points that are not identical, for $k=3$ we need three points that are not collinear.
  With the same arguments as for the the $2$-dimensional case, the number of candidate sets is bound by $\mathcal{O}(\binom{n}{k})$ = $\mathcal{O}(n^k)$.
  Therefore Algorithm~\ref{algo:Drezner} can be solved in $k$ dimensions in $\mathcal{O}(n^k\cdot h(n))$ time.
\end{remark}

We additionally suggest the following improvement that 
is obtained by replacing lines $1$-$5$ by Algorithm~\ref{algo:improvement}: Instead of investigating all $a_i,a_j$ with
$d(a_i,a_j) \leq 2^q C$ we sort out points $a_i$ for which we can be sure that 
they will not lead to a solution that improves our current best objective function value. The sorting out is based on the following lemmas.

\begin{lemma}\label{lem:manyfarpts}
  Let $a\in \mathcal{A}$ and $A^\prime:=\lbrace a^\prime \in \mathcal{A} \ \vert \ d^q(a,a^\prime) > 2^qC \rbrace$. Let $x\in \mathbb{R}^k$ s.t. $d^q(x,a) \leq C$. Then 
  \begin{itemize}
    \item[(i)] $A^\prime \subseteq \con(x)$,
    \item[(ii)] $f_C(x) \geq C\cdot \vert A^\prime \vert$.
  \end{itemize}
  
\end{lemma}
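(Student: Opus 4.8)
The plan is to establish (i) directly from the triangle inequality of the underlying metric and then obtain (ii) as an immediate consequence of (i) combined with the decomposition \eqref{eq-split}.

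For part (i), I would first translate both hypotheses from $d^q$ into the metric $d$ itself, since it is $d$ (and not $d^q$ for $q>1$) that satisfies the triangle inequality. Because $t\mapsto t^q$ is increasing on $[0,\infty)$, the assumption $d^q(a,a')>2^qC$ is equivalent to $d(a,a')>2\,C^{1/q}$, and $d^q(x,a)\le C$ is equivalent to $d(x,a)\le C^{1/q}$. Applying the triangle inequality $d(a,a')\le d(a,x)+d(x,a')$ and rearranging then gives $d(x,a')\ge d(a,a')-d(a,x)>2C^{1/q}-C^{1/q}=C^{1/q}$. Raising both sides to the $q$-th power yields $d^q(x,a')>C$, which is exactly the statement that $a'\in\con(x)$. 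Since $a'$ was an arbitrary element of $A'$, this proves $A'\subseteq\con(x)$.

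For part (ii), I would invoke the decomposition \eqref{eq-split} in the form $f_C(x)=f(x,\act(x))+C\cdot|\con(x)|$. The active part $f(x,\act(x))$ is a sum of nonnegative distances, so $f_C(x)\ge C\cdot|\con(x)|$. By part (i) we have $A'\subseteq\con(x)$, hence $|\con(x)|\ge|A'|$, and combining the two inequalities gives $f_C(x)\ge C\cdot|A'|$.

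The only delicate point, and therefore the sole (minor) obstacle I anticipate, is the correct bookkeeping of the exponent $q$: the factor $2^q$ in the definition of $A'$ is precisely what is needed so that, after taking $q$-th roots, the factor $2$ appears and absorbs the slack in the triangle inequality, leaving a strict margin of $C^{1/q}$. One must also take care to apply the triangle inequality to $d$ rather than to $d^q$, which need not be a metric for $q>1$. Once this translation is handled, both parts are routine.
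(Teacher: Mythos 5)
Your proof is correct and follows essentially the same route as the paper: part (i) by translating both hypotheses into the metric $d$ and applying the triangle inequality to obtain $d(x,a')>\sqrt[q]{C}$, and part (ii) by combining $A'\subseteq\con(x)$ with the decomposition \eqref{eq-split}. No gaps.
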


\begin{proof} \ 
  \begin{itemize}
    \item[ad (i)] 
    Take $a^\prime \in A^\prime$.
    We know by definition of $A^\prime$ that $$d^q(a,a^\prime) > 2^q C \Rightarrow d(a,a^\prime) > 2 \sqrt[q]{C}.$$
    We also know that $$d^q(a,x) \leq C \Rightarrow d(a,x) \leq \sqrt[q]{C}.$$
    With the triangle inequality we get that $$\underbrace{d(a,a^\prime)}_{> 2 \sqrt[q]{C}} - \underbrace{d(a,x)}_{\leq \sqrt[q]{C}} \leq d(x,a^\prime).$$
    The left side of the inequality is $> \sqrt[q]{C}$. Hence $\sqrt[q]{C} < d(x,a^\prime) \Rightarrow C < d^q(x,a^\prime) \Rightarrow a^\prime \in \con(x)$.
    \item[ad (ii)] Now we know from (i) that $A^\prime \subseteq \con(x)$. Therefore $f_C(x) \geq C\cdot \vert \con(x) \vert \geq C\cdot \vert A^\prime \vert$.
  \end{itemize}
  
\end{proof}

\begin{lemma}\label{lem:skipa}
  Let $a \in \mathcal{A}$, let $z=f_C(x)$ for some $x\in \mathbb{R}^k$.
  Let $A^\prime:=\lbrace a^\prime \in \mathcal{A} \ \vert \ d^q(a,a^\prime) > 2^qC \rbrace$ like in Lemma~\ref{lem:manyfarpts}. If $C\cdot \vert A^\prime \vert > z$ then 
  no set $S \ni a$ constructed in Algorithm~\ref{algo:Drezner} will lead to an optimal solution $\xi^\ast \in \mathcal{X}^\ast_C$.
\end{lemma}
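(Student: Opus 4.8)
The plan is to reduce the claim to Lemma~\ref{lem:manyfarpts} combined with the elementary bound $\mathcal{Z}_C^\ast \leq z$. First I would record that trivial inequality: since $z = f_C(x)$ is the cutoff objective at a genuine feasible point $x \in \mathbb{R}^k$, the optimal value of \eqref{cutoff} cannot exceed it, so $\mathcal{Z}_C^\ast \leq z$. This is the only place where the specific role of $z$ enters.

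Next I would argue by contradiction and isolate the situation in which $a$ is actually used. Suppose some set $S \ni a$ built in Algorithm~\ref{algo:Drezner} leads to an optimal solution $\xi^\ast \in \mathcal{X}^\ast_C$ for which $a$ is an active point, i.e.\ $d^q(\xi^\ast,a) \leq C$, equivalently $a \in \act(\xi^\ast)$. This is precisely the hypothesis required to apply Lemma~\ref{lem:manyfarpts}(ii) with the choice $x = \xi^\ast$, which immediately gives $f_C(\xi^\ast) \geq C\cdot |A^\prime|$. Chaining this with the first step and optimality yields $C\cdot |A^\prime| \leq f_C(\xi^\ast) = \mathcal{Z}_C^\ast \leq z$, contradicting the standing assumption $C\cdot |A^\prime| > z$. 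Hence no optimal solution can keep $a$ active, i.e.\ every $\xi^\ast \in \mathcal{X}^\ast_C$ satisfies $a \in \con(\xi^\ast)$.

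The remaining work is the bookkeeping that turns this statement about active points into the stated skip rule, and I expect this to be the main obstacle, since there is a genuine gap between ``$a \in S$'' and ``$a \in \act(\xi^\ast)$'': the \Name\ of a set $S$ that happens to contain $a$ need not keep $a$ within cutoff distance of that barycenter. I would close this gap using the structure already established. The sets $S$ constructed in the algorithm are exactly the active sets $\act(c_k)$ of the two ball centers, and by Lemma~\ref{lemma1} any optimal $\xi^\ast$ is recovered as the \Name\ of $\act(\xi^\ast)$. Since we have just shown $a \notin \act(\xi^\ast)$, the set that genuinely produces an optimal $\xi^\ast$ never needs to contain $a$.

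To finish, I would dispose of the two possibilities for a candidate generated from a set $S \ni a$. Either $a$ fails to be active at the resulting barycenter, in which case the point $a$ is irrelevant to that candidate and the very same candidate is produced from $S\setminus\{a\}$; or $a$ is active at the barycenter, in which case the contradiction of the second paragraph shows the candidate is strictly worse than $z$ and thus not optimal. In both cases, discarding every set $S \ni a$ removes no optimal solution from consideration, which is exactly the assertion of the lemma. I do not expect any delicate estimates here beyond the single application of Lemma~\ref{lem:manyfarpts}; the care is purely in matching the algorithmic objects to the inequality.
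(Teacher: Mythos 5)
There is a genuine gap in the final paragraph. Your first two steps are fine: $\mathcal{Z}_C^\ast \leq z$ is immediate, and applying Lemma~\ref{lem:manyfarpts}(ii) at an optimal $\xi^\ast$ with $a \in \act(\xi^\ast)$ gives $C\cdot|A^\prime| \leq f_C(\xi^\ast) = \mathcal{Z}_C^\ast \leq z$, so indeed $a \in \con(\xi^\ast)$ for every optimal $\xi^\ast$ (this is exactly the fact the paper later uses in the proof of Theorem~\ref{lem:reduction1}). You also correctly identify the remaining gap between ``$a \in S$'' and ``$a \in \act(\xi^\ast)$''. But your bridge does not hold: in the case where $a$ is not active at the candidate, it is \emph{not} true that ``the very same candidate is produced from $S\setminus\{a\}$''. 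The candidate is the classical barycenter of $S$, i.e.\ the minimizer of $\sum_{a^\prime \in S} d^q(\cdot,a^\prime)$ \emph{without} cutoff, so it depends on every point of $S$ whether or not that point ends up within distance $C$ of the minimizer; deleting $a$ generally moves the candidate. Moreover $S\setminus\{a\}$ is not even among the four subsets the algorithm examines unless $a$ happens to be one of the two defining points $a_i,a_j$. So your case analysis does not show that discarding $S$ is harmless.

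The paper closes the gap by applying Lemma~\ref{lem:manyfarpts}(i) at the \emph{ball center} $c_k$ rather than at $\xi^\ast$: by construction $S = \{a^\prime \in \mathcal{A} : d^q(c_k,a^\prime) \leq C\}$, so $a \in S$ means $d^q(c_k,a) \leq C$ and part (i) yields $A^\prime \subseteq \con(c_k) = \mathcal{A}\setminus S$. Consequently, for any hypothetical optimal $\xi$ whose active set is $S$ (or any subset $T \subseteq S$), one has $\con(\xi) = \mathcal{A}\setminus T \supseteq \mathcal{A}\setminus S \supseteq A^\prime$ and hence $f_C(\xi) \geq C\cdot|A^\prime| > z \geq \mathcal{Z}_C^\ast$, a contradiction. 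This argument needs no assumption about whether $a$ is active at the candidate, and it covers all four subsets derived from $S$ at once, which is what the correctness of the skip rule actually requires (the algorithm's guarantee rests on capturing the active set of an optimal solution, and the paper's argument shows no such active set is a subset of a skipped $S$). If you replace your final paragraph by this application of part (i) at $c_k$, the proof is complete.
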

\begin{proof}
  Let $\xi^\ast \in \mathcal{X}^\ast_C$ be an optimal solution to \eqref{cutoff}.
  We know from Lemma~\ref{lemma1} that any optimal $\xi^\ast \in \mathcal{X}^\ast_C$ is a solution of (Bar($ \act(\xi^\ast)$)). 
  Say we have constructed a set $S$ containing $a\in \mathcal{A}$ in line $10$ of Algorithm~\ref{algo:Drezner}. Suppose there is a $\xi \in \mathcal{X}^\ast_C$, such that $\act(\xi) = S$. 
  We know then that $f_C(\xi) \geq C\cdot |\mathcal{A}\setminus S|$. With Lemma~\ref{lem:manyfarpts}(i) and the construction of the set $S$ we know that $\mathcal{A}\setminus S \supseteq A^\prime$. Therefore $f_C(\xi) \geq C\cdot |\mathcal{A}\setminus S| \geq C\cdot \vert A^\prime \vert > z \geq \mathcal{Z}^\ast_{C}$. Therefore no set $S$ that we construct 
  in Algorithm~\ref{algo:Drezner} that contains the point $a$, nor any of its subsets are the active set of an optimal solution.
\end{proof}

\begin{algorithm}[!ht]
  \caption{Improvement of Algorithm~\ref{algo:Drezner}}
  \label{algo:improvement}
  \input{Set $\mathcal{A} = \lbrace a_1, \ldots, a_n \rbrace$, cutoff $C>0$}
  \output{A barycenter $\xi^{\ast}$ of $\eqref{cutoff}$, objective function value $\mathcal{Z}_C^\ast$}
  \BlankLine
  Set $\xi^{\ast} \leftarrow a_1$, $\mathcal{Z}_C^\ast \leftarrow \infty$, $\mathcal{B} \leftarrow \mathcal{A}$\;
  \For{$i \leftarrow 1$ \KwTo $(n-1)$}{
    $continue$ $\leftarrow$ $true$\;
    $m \leftarrow \vert \lbrace a \in \mathcal{B} \ \vert \ d^q(a_i,a) \leq 2^qC \rbrace \vert$\;
    \If{$(n-m) \cdot C \geq \mathcal{Z}_C^\ast$}{
      $\mathcal{B} \leftarrow \mathcal{B}\setminus \lbrace a_i \rbrace$\;
      $continue$ $\leftarrow$ $false$\;
      }
    \If{continue}{
      Inner Loop\;
    }
  }
  \Return{$\xi^{\ast}, \mathcal{Z}_C^\ast$}
\end{algorithm}

\begin{theo}\label{lem:reduction1}
  Let $\mathcal{A} \subseteq\mathbb{R}^2$, let $d$ be a norm-metric and say we can solve \eqref{classic} in $h(n)$ time. Then Algorithm~\ref{algo:improvement} solves the problem \eqref{cutoff} in $\mathcal{O}(n^2\cdot h(n))$ time.
\end{theo}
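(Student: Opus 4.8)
This statement bundles two claims — correctness and runtime — and I would establish them separately. Since Algorithm~\ref{algo:improvement} is literally Algorithm~\ref{algo:Drezner} with the Inner Loop skipped (and $a_i$ deleted from $\mathcal{B}$) whenever $(n-m)C\geq\mathcal{Z}_C^\ast$, the plan for correctness is to reduce to the already-proven correctness of Algorithm~\ref{algo:Drezner} by showing that no such skip can destroy an optimal solution. Throughout I write $z$ for the running incumbent value (the code variable, evaluated by the true objective $f_C(\cdot,\mathcal{A})$) and reserve $\mathcal{Z}_C^\ast$ for the optimum; note $z$ is non-increasing and always satisfies $z\geq\mathcal{Z}_C^\ast$, with equality iff the incumbent is optimal. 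The runtime is the easy half: each of the at most $n-1$ outer passes adds only the $\mathcal{O}(n)$ cost of computing $m$ plus an $\mathcal{O}(1)$ test, for $\mathcal{O}(n^2)$ overhead, while the Inner Loop now runs on a (weakly) smaller index set and so costs no more than in Algorithm~\ref{algo:Drezner}, i.e.\ $\mathcal{O}(n^2 h(n))$; as $h(n)\geq 1$ the overhead is absorbed.

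For correctness I would induct on the outer loop, maintaining the invariant $(\ast)$: at the start of iteration $i$, either $z=\mathcal{Z}_C^\ast$, or every deleted point $a\in R:=\mathcal{A}\setminus\mathcal{B}$ lies in $\con(\xi^\ast)$ for \emph{every} optimal $\xi^\ast$. This holds trivially at the start. In the step the only case to check is a deletion, so $(n-m)C\geq z$ with $z>\mathcal{Z}_C^\ast$. With $A':=\{a'\in\mathcal{A}:d^q(a_i,a')>2^qC\}$, a count gives $n-m=|A'|+|R\setminus A'|=|A'\cup R|$. If some optimal $\xi^\ast$ had $a_i\in\act(\xi^\ast)$, then $d^q(\xi^\ast,a_i)\leq C$, so Lemma~\ref{lem:manyfarpts}(i) gives $A'\subseteq\con(\xi^\ast)$ and the induction hypothesis gives $R\subseteq\con(\xi^\ast)$; hence $|\con(\xi^\ast)|\geq|A'\cup R|=n-m$ and $f_C(\xi^\ast)\geq C|\con(\xi^\ast)|\geq(n-m)C\geq z>\mathcal{Z}_C^\ast$, contradicting optimality. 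So $a_i\in\con(\xi^\ast)$ for every optimal $\xi^\ast$ and $(\ast)$ survives. The identical count yields the sharpening I really need: if $a_i$ is deleted, then no optimal active set is contained in the ball-set $S$ of any pair $\{a_i,a_j\}$ — for if $\act(\xi)\subseteq S$ with $\xi$ optimal, then $a_i\in S$ forces $A'\subseteq\mathcal{A}\setminus S\subseteq\con(\xi)$, and with $R\subseteq\con(\xi)$ we again get $|\con(\xi)|\geq|A'\cup R|=n-m$, the same contradiction.

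To finish, I would assume for contradiction that the output is suboptimal, $z>\mathcal{Z}_C^\ast$; by monotonicity this held throughout, so every deletion sat in the second branch of $(\ast)$ and the sharpening applies to all deleted points. Correctness of Algorithm~\ref{algo:Drezner}, together with Lemma~\ref{lemma1}, supplies an optimal $\xi^\ast$ and a pair $\{a,b\}$ whose radius-$C$ ball has set $S$ with $\act(\xi^\ast)\subseteq S$ and $\act(\xi^\ast)\in\{S,S\setminus\{a\},S\setminus\{b\},S\setminus\{a,b\}\}$, so that processing $\{a,b\}$ solves \eqref{classic} on $\act(\xi^\ast)$ and recovers an optimal point. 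But $\act(\xi^\ast)$ is an optimal active set inside the ball of $\{a,b\}$, so the sharpening forbids deleting either $a$ or $b$; in particular the smaller-indexed of the two is never pruned, its Inner Loop runs and processes the pair $\{a,b\}$ (which passes the test $d^q(a,b)\leq 2^qC$, as $a,b$ lie in a common radius-$C$ ball), and the incumbent attains value $\mathcal{Z}_C^\ast$ — contradicting $z>\mathcal{Z}_C^\ast$. Hence $z=\mathcal{Z}_C^\ast$ at termination.

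The hard part is precisely the gap papered over by $(\ast)$. Because $m$ counts neighbours within the \emph{shrinking} set $\mathcal{B}$, the test $(n-m)C\geq z$ is \emph{weaker} than the condition $C|A'|>z$ of Lemma~\ref{lem:skipa} (indeed $n-m\geq|A'|$), so that lemma cannot be invoked verbatim; one must justify treating already-deleted points as constant points of every optimal solution, which is exactly what $(\ast)$ encodes. A secondary wrinkle is that Drezner's two defining points of an optimal active set need not be active themselves, so their premature deletion has to be excluded through the sharpened form of Lemma~\ref{lem:skipa} rather than from $(\ast)$ directly.
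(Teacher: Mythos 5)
Your proof is correct, and it is in fact more complete than the paper's own argument. The paper's proof simply invokes Lemma~\ref{lem:skipa}, asserting $|A^\prime|\cdot C=(n-m)\cdot C$; but since $m$ is counted inside the shrinking set $\mathcal{B}$ rather than $\mathcal{A}$, one only has $n-m=|A^\prime|+|\{a\in\mathcal{A}\setminus\mathcal{B}:d^q(a_i,a)\leq 2^qC\}|\geq|A^\prime|$, so the algorithm's test is genuinely weaker than the hypothesis of Lemma~\ref{lem:skipa} and the lemma cannot be applied verbatim once a point has been deleted. You identify exactly this gap and close it with the invariant that every deleted point lies in $\con(\xi^\ast)$ for \emph{every} optimal $\xi^\ast$, which via $|A^\prime\cup R|=n-m$ restores the counting argument; this is the missing ingredient in the published proof. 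Your second refinement --- the observation that the two points defining a candidate ball need not themselves be active, so that the paper's justification ``$a_i\notin\act(\xi^\ast)$'' does not by itself license skipping the Inner Loop for $a_i$ --- is also a real subtlety that the paper glosses over, and your sharpened form of Lemma~\ref{lem:skipa} (no optimal active set is contained in the ball-set of any pair involving a deleted point) handles it cleanly. The only point worth tightening is cosmetic: the algorithm prunes on the non-strict inequality $(n-m)C\geq z$ while Lemma~\ref{lem:skipa} is stated with a strict one, so in the boundary case $(n-m)C=z$ your contradiction $f_C(\xi^\ast)\geq z>\mathcal{Z}_C^\ast$ still goes through only because you have already assumed $z>\mathcal{Z}_C^\ast$ in that branch of the invariant --- which you do, so the argument stands. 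The runtime half matches the paper's.
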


\begin{proof}
  We have to prove two things: first the runtime and second the correctness.
  \\
  First: The calculation of $m$ takes $\mathcal{O}(n)$ time and hence does not increase the runtime of the algorithm.
  \\
  Second: 
  For the correctness we have to prove that although we skip the \emph{Inner Loop} for some $a_i$ we still compute the optimal solution.
  Lemma~\ref{lem:skipa} implies that we can skip any point $a_i$ if for $A^\prime:=\lbrace a^\prime \in \mathcal{A} \ \vert \ d^q(a_i,a^\prime) > 2^qC \rbrace$ the value $|A^\prime| \cdot C = (n-m)\cdot C$ is larger than the current best objective function value. 
  In addition the proof of Lemma~\ref{lem:skipa} yields that $a_i \not\in \act(\xi^\ast)$ for $\xi^\ast \in \mathcal{X}^\ast_C$. It is therefore justified to permanently remove $a_i$ from the candidate set of potentially active points in line $4$ of Algorithm~\ref{algo:improvement}.
\end{proof}

Later in Section~\ref{sec:emptyCenter}, where we solve \eqref{empty}, we can further reduce the computation time 
with the knowledge that also any point $a_i$ 
for which $m \leq (1-\alpha)\cdot n$ can also be disregarded,
compare Lemma~\ref{lem:largeActiveXi}.

\subsubsection*{Other consequences of Lemma~\ref{lemma1}}

Apart from its algorithmic implication, Lemma~\ref{lemma1} has several other consequences
since it transfers properties that depend on the local structure from \eqref{classic} to \eqref{cutoff}. 
This holds for properties which are only based on the metric and on the existing facilities. 
Such properties then also hold for subsets of the existing facilities,
and in particular for $\act(\xi^*)$ where $\xi^*$ is an optimal solution of \eqref{cutoff}. 
A first example of such a condition which will be used later in Theorem~\ref{thm:ignorecut} 
is the property \eqref{convAssumption} for \eqref{classic} that there always exists an optimal solution
$\xi^*$ to the barycenter problem which is contained in the convex hull $\conv(\cA)$ of the existing
facilties. 

\begin{equation*}\label{convAssumption}
  \text{ For any subset} A \subseteq \mathcal{A} : \cX^*(A) \cap \conv(A) \neq \emptyset \tag{conv}
\end{equation*}

If \eqref{convAssumption} holds for \eqref{classic} then it also holds for \eqref{cutoff}.

\begin{lemma}\label{cor:convexhulls}
  If \eqref{convAssumption} then $\mathcal{X}_C^{\ast}(\mathcal{A}) \cap \conv(\mathcal{A}) \neq \emptyset$.
\end{lemma}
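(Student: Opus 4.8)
The plan is to leverage Lemma~\ref{lemma1}, which lets me transfer an in-hull optimizer of the \emph{classical} problem on the active set to an in-hull optimizer of the \emph{cutoff} problem on all of $\mathcal{A}$. The whole argument is a short chain of inclusions, so I expect no serious computation; the only thing to watch is a degenerate edge case.

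First I would fix any optimal solution $\xi^\ast \in \cX^\ast_C(\mathcal{A})$ of \eqref{cutoff} and set $A := \act(\xi^\ast)$. Before applying \eqref{convAssumption} I need $A \neq \emptyset$, since otherwise $\conv(A)$ is empty and the hypothesis is vacuous for $A$. This is where I would invoke the cheap upper bound: if $A = \emptyset$ then every distance is cut off and $f_C(\xi^\ast) = nC$, whereas Lemma~\ref{lem:UpperBounds}(ii) gives $\mathcal{Z}_C^\ast \leq (n-1)C < nC$ (using $C>0$), contradicting optimality of $\xi^\ast$. Hence $A$ is a nonempty subset of $\mathcal{A}$.

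Next I would apply assumption \eqref{convAssumption} to the subset $A$ to obtain a point
\[
  \eta^\ast \in \cX^\ast(A) \cap \conv(A),
\]
i.e.\ an optimizer of the classical problem (\Nam($A$)) lying in $\conv(A)$. By Lemma~\ref{lemma1}(ii) applied to $\xi^\ast$ we have $\cX^\ast(\act(\xi^\ast)) \subseteq \cX^\ast_C(\mathcal{A})$, so $\eta^\ast$ is in fact optimal for the cutoff problem on $\mathcal{A}$. Finally, since $A \subseteq \mathcal{A}$ implies $\conv(A) \subseteq \conv(\mathcal{A})$, the point $\eta^\ast$ lies in $\conv(\mathcal{A})$ as well. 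Thus $\eta^\ast \in \cX^\ast_C(\mathcal{A}) \cap \conv(\mathcal{A})$, which is therefore nonempty.

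The only point requiring care---and the closest thing to an obstacle---is the nonemptiness of $\act(\xi^\ast)$, which simultaneously guarantees that \eqref{convAssumption} may be applied to $A$ and that $\conv(A)$ is meaningful; this is settled entirely by the upper bound of Lemma~\ref{lem:UpperBounds}(ii). (One should also note that a minimizer $\xi^\ast$ exists: $f_C$ is continuous and equals $nC$ once $x$ is so far that $d^q(x,a)>C$ for every $a\in\mathcal{A}$, while $f_C(a)\le (n-1)C$ for $a\in\mathcal{A}$, so the infimum is attained on a bounded set.) Everything else is a direct substitution into Lemma~\ref{lemma1}.
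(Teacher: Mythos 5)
Your proof is correct and follows essentially the same route as the paper's: apply Lemma~\ref{lemma1}(i) to get that $\xi^\ast$ is optimal for (\Nam($\act(\xi^\ast)$)), use \eqref{convAssumption} to pick $\eta^\ast \in \cX^\ast(\act(\xi^\ast)) \cap \conv(\act(\xi^\ast)) \subseteq \conv(\cA)$, and conclude via Lemma~\ref{lemma1}(ii). Your extra check that $\act(\xi^\ast) \neq \emptyset$ (via Lemma~\ref{lem:UpperBounds}(ii)) is a small tidy-up the paper omits, but it does not change the argument.
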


\begin{proof}
  \ 
  Let $\xi^\ast \in \cX^*_C(\cA)$. From Lemma~\ref{lemma1} we know that
  $\xi^\ast$ is optimal for \Nam$(\act(\xi^\ast))$.
  There exists $\eta^\ast \in \cX^*(\act(\xi^\ast))$
  with $\eta^\ast \in \conv(\act(\xi^\ast)) \subseteq \conv(\cA)$.
  From the second part of Lemma~\ref{lemma1} we know that
  $\eta^\ast \in \cX^*_C(\cA)$. Together, the result follows.
\end{proof}

Condition \eqref{convAssumption} is satisfied for many location problems. We list
cases in which it holds below.

\begin{itemize}
\item In $\mathbb{R}^2$ (and in $\mathbb{R}^1$) \eqref{convAssumption} holds for all
  distances $d$ dervied from norms \cite{plastria1984localization}.
\item For $k>2$, \eqref{convAssumption} only holds in general if $d$ is a norm which is linearly equivalent
  to the $\ell_2$-norm \cite{plastria1984localization}.
\item \eqref{convAssumption} holds for $d=\ell_2^2$, since the (unique) optimal solution of \eqref{classic} is in
  this case the coordinate-wise mean of the points in $\cA$ which is always contained in $\conv(\cA)$. 
\end{itemize}

There are many other examples of conditions which can be transferred form \eqref{classic}
to \eqref{cutoff}. Among them are:
\begin{itemize}
\item There exists a finite candidate set for \eqref{cutoff} if $d$ is derived from a
  polyhedral norm. This candidate set can be found by using the
  intersection points of the fundamental directions.
\item For problems \eqref{cutoff} with restricted set $R$ all optimal solutions are either optimal
  solutions for the unrestricted problem or are contained
  in the boundary of $R$.
\end{itemize}
\bigskip

Above we stated the property \eqref{convAssumption}, which will enable us to make a connection between \eqref{classic} and \eqref{cutoff} in Theorem~\ref{thm:ignorecut}.
We now state a weaker assumption that also allows for a connection between \eqref{classic} and \eqref{cutoff}.

\begin{align*}\label{ballAssumption}
  &\text{There exists a ball } B = B(x,r) \text{ such that:}  
  \\
  &\text{for all } A \subseteq \mathcal{A}: \mathcal{X}^{\ast}(A) \cap B \neq \emptyset
  \tag{B}
\end{align*}

\begin{lemma}
  Condition \eqref{ballAssumption} implies that $\mathcal{A} \subseteq B$.
\end{lemma}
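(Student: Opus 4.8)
The plan is to exploit condition \eqref{ballAssumption} by specializing the subset $A$ to singletons. The key observation is that for a single existing point $a \in \mathcal{A}$, the barycenter problem {\rm (\Nam($\{a\}$))} has objective function $f(x,\{a\}) = d^q(x,a)$, and this is minimized precisely where the distance vanishes.

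First I would pin down $\mathcal{X}^*(\{a\})$ explicitly. Since $d$ is a metric and $q \geq 1$, we have $d^q(x,a) \geq 0$ for every $x \in \mathbb{R}^k$, with equality if and only if $x = a$ by definiteness. Hence $x = a$ is the unique minimizer of $f(\cdot,\{a\})$, so $\mathcal{X}^*(\{a\}) = \{a\}$.

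Next I would apply the defining property of \eqref{ballAssumption} to the subset $A = \{a\}$. Condition \eqref{ballAssumption} guarantees $\mathcal{X}^*(\{a\}) \cap B \neq \emptyset$, which by the previous step reads $\{a\} \cap B \neq \emptyset$, i.e.\ $a \in B$. As $a \in \mathcal{A}$ was arbitrary, ranging over all singletons shows $a \in B$ for every $a \in \mathcal{A}$, and therefore $\mathcal{A} \subseteq B$.

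I do not expect any genuine obstacle here: the argument is a direct instantiation of \eqref{ballAssumption}. The only point requiring care is the claim $\mathcal{X}^*(\{a\}) = \{a\}$, which rests on the definiteness of the metric (so that a positive distance cannot be optimal); with that in hand the conclusion is immediate.
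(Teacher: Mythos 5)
Your proposal is correct and follows exactly the paper's argument: specialize \eqref{ballAssumption} to the singleton $A=\{a\}$, note that $\mathcal{X}^*(\{a\})=\{a\}$ by definiteness of the metric, and conclude $a\in B$ for every $a\in\mathcal{A}$. The only difference is that you spell out the uniqueness of the minimizer, which the paper asserts without comment.
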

\begin{proof}
  For any point $a \in \mathcal{A}$ the singleton $\lbrace a \rbrace$ is a subset of $\mathcal{A}$. The optimal solution $\xi^\ast$ of (Bar$(\lbrace a \rbrace)$) is $\xi^\ast = a$. Therefore $B$ must contain all points $a \in \mathcal{A}$.
\end{proof}

The set $\mathcal{A}$ is finite. That means there are only finitely many subsets $A \subseteq \mathcal{A}$ and a ball $B$ that fulfills \eqref{ballAssumption} always exists. 
We consider the smallest one.

\begin{de}
  We define a \emph{ball} with center $x \in \mathbb{R}^k$ and radius $r>0$ by $B := B(x,r) := \lbrace y\in \mathbb{R}^k \; \vert \; d(x,y) \leq r \rbrace$.
  We denote by 
  \\
  $B_0 = B(x_0,r_0)$ 
  a \emph{smallest ball} (in terms of radius) 
  that fulfills \eqref{ballAssumption}.
\end{de}

We can now make a connection between the optimal objective function values $\mathcal{Z}^\ast$ and $\mathcal{Z}^\ast_C$.

\begin{theo}\label{thm:2r0}
  If $2r_0 \leq \sqrt[q]{C}$, then $\mathcal{Z}^\ast = \mathcal{Z}_C^\ast$
\end{theo}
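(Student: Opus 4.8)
The plan is to establish the nontrivial inequality $\mathcal{Z}^\ast \leq \mathcal{Z}_C^\ast$, since the reverse $\mathcal{Z}_C^\ast \leq \mathcal{Z}^\ast$ is already supplied by Lemma~\ref{lem:UpperBounds}(i). The whole argument hinges on producing a single minimizer of \eqref{cutoff} that lies inside $B_0$ and at which the cutoff is never actually triggered, so that the two objective functions agree there.

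First I would record two geometric consequences of the hypothesis. Since $B_0$ satisfies \eqref{ballAssumption}, we have $\mathcal{A} \subseteq B_0$ (this is exactly the content of the observation that \eqref{ballAssumption} forces the data set into the ball). Combined with $2r_0 \leq \sqrt[q]{C}$, the triangle inequality gives that any two points $x,y \in B_0$ satisfy $d(x,y) \leq d(x,x_0) + d(x_0,y) \leq 2r_0 \leq \sqrt[q]{C}$, hence $d^q(x,y) \leq C$. In particular, for any $x \in B_0$ and every $a \in \mathcal{A}$ we get $d^q(x,a) \leq C$, so $\act_C(x) = \mathcal{A}$, $\con_C(x) = \emptyset$, and therefore $f_C(x) = f(x)$ by the splitting \eqref{eq-split}.

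The central step is then to exhibit an optimal solution of \eqref{cutoff} lying in $B_0$. I would start from an arbitrary $\xi^\ast \in \cX^*_C(\cA)$ and set $A := \act(\xi^\ast)$. By Lemma~\ref{lemma1}(ii) we have $\cX^*(A) \subseteq \cX^*_C(\cA)$, so every optimum of the Weber problem on the active set is simultaneously a cutoff optimum; by \eqref{ballAssumption} applied to the subset $A$, there exists $\eta^\ast \in \cX^*(A) \cap B_0$. Thus $\eta^\ast$ is a minimizer of \eqref{cutoff} that lies inside $B_0$. Finally, evaluating at $\eta^\ast$ and using the first paragraph gives $\mathcal{Z}_C^\ast = f_C(\eta^\ast) = f(\eta^\ast) \geq \mathcal{Z}^\ast$, which together with Lemma~\ref{lem:UpperBounds}(i) yields $\mathcal{Z}^\ast = \mathcal{Z}_C^\ast$. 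The one genuinely delicate point is this central step: an arbitrary cutoff optimum need not lie in $B_0$, so one must invoke Lemma~\ref{lemma1} together with \eqref{ballAssumption} to \emph{relocate} to an optimum that is at once cutoff-optimal and contained in $B_0$ — the hypothesis on $r_0$ then guarantees the cutoff is inactive precisely at that relocated point.
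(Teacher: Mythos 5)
Your proof is correct and follows essentially the same route as the paper's: both arguments rest on locating an optimum of \eqref{cutoff} inside $B_0$, where the hypothesis $2r_0 \leq \sqrt[q]{C}$ forces $f_C = f$. In fact you are slightly more careful than the paper, which merely asserts that a cutoff-optimal solution in $B_0$ ``must exist due to \eqref{ballAssumption}''; your explicit relocation step via Lemma~\ref{lemma1}(ii) applied to $\act(\xi^\ast)$ is exactly the justification that assertion needs.
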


\begin{proof}
  Take an optimal solution $\xi^\ast$ of \eqref{cutoff} inside $B_0$ and an optimal solution $\eta^\ast$ of \eqref{classic} inside $B_0$. 
  Both solutions must exist due to \eqref{ballAssumption}. We prove that $\mathcal{Z}^\ast = f(\eta^\ast) = f_C(\xi^\ast) = \mathcal{Z}^\ast_C$.
  
  We know that $\xi^\ast \in B_0$. That means for every $a \in \mathcal{A}$ that $d(\xi^\ast,a) \leq d(\xi^\ast,x_0) + d(x_0,a) \leq 2r_0$. 
  Then $d^q(\xi^\ast,a) \leq (2r_0)^q \leq C$ and hence $f_C(\xi^\ast) = f(\xi^\ast)$. 
  Analogously we get that $f(\eta^\ast) = f_C(\eta^\ast)$.
  From the optimality of both $\xi^\ast$ and $\eta^\ast$ it follows that $f_C(\xi^\ast) \leq f_C(\eta^\ast)$ and 
  $f(\eta^\ast) \leq f(\xi^\ast)$ and therefore $f(\xi^\ast) = f_C(\xi^\ast) \leq f_C(\eta^\ast) = f(\eta^\ast) \leq f(\xi^\ast)$, i.e. $f(\xi^\ast) = f(\eta^\ast)$.
\end{proof}

We will see in the next section in Lemma~\ref{lem:atob} that $\mathcal{Z}^\ast = \mathcal{Z}_C^\ast$ also implies that $\mathcal{X}^{\ast} \subseteq \mathcal{X}^{\ast}_C$.

\section{Comparing \Nam$_C$ and \Nam}\label{sec:cutoff}

In this section we have a closer look at the barycenter problem with cutoff in comparison to the barycenter problem without cutoff. 
In general, problem \eqref{classic} has an easier structure than problem~\eqref{cutoff}.
While \eqref{classic} is a convex problem for every norm-metric $d$, the cutoff destroys convexity and can, e.g., lead to non-connected optimal solution sets. 
In the following we identify conditions under which solving \eqref{classic} gives us the objective function value of \eqref{cutoff} or
even an optimal solution of the latter. 
\begin{itemize}
\item[(a)] $(\Nam)$ has the same objective function value as $(\Nam_C)$, i.e. $\cZ^*=\mathcal{Z}^\ast_C$. 
\item[(b)] Any solution to $(\Nam)$ is a solution to $(\Nam_C)$, i.e., $\mathcal{X}^{\ast} \subseteq \mathcal{X}^{\ast}_C$.
\end{itemize}

If the second condition holds then it is sufficient to solve $(\Nam)$. We first show that
condition (b) already follows from (a) (but not vice versa), so either condition is useful. 

\begin{lemma}\label{lem:atob}
If condition $(a)$ holds then $(b)$ holds as well.
\end{lemma}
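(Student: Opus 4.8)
The plan is to use a simple squeeze argument built on the pointwise inequality $f_C \leq f$ that was already recorded in the proof of Lemma~\ref{lem:UpperBounds}(i). The starting point is to fix an \emph{arbitrary} optimal solution $\eta^\ast \in \mathcal{X}^\ast$ of \eqref{classic}, so that $f(\eta^\ast) = \mathcal{Z}^\ast$, and then to verify that $\eta^\ast$ is optimal for \eqref{cutoff} as well, i.e. that $f_C(\eta^\ast) = \mathcal{Z}_C^\ast$. Since $\eta^\ast$ is an arbitrary element of $\mathcal{X}^\ast$, establishing this for each such $\eta^\ast$ immediately yields the desired inclusion $\mathcal{X}^\ast \subseteq \mathcal{X}_C^\ast$.

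First I would recall that, because $d^q_C(x,a) = \min\{d^q(x,a),C\} \leq d^q(x,a)$ for every $a \in \mathcal{A}$, summation over $\mathcal{A}$ gives $f_C(x) \leq f(x)$ for all $x \in \mathbb{R}^k$. Evaluating this at $\eta^\ast$ and then invoking optimality of $\eta^\ast$ for \eqref{classic} together with assumption~$(a)$ produces the chain $f_C(\eta^\ast) \leq f(\eta^\ast) = \mathcal{Z}^\ast = \mathcal{Z}_C^\ast$. For the reverse inequality, $\mathcal{Z}_C^\ast$ is by definition the minimum of $f_C$ over $\mathbb{R}^k$, so trivially $f_C(\eta^\ast) \geq \mathcal{Z}_C^\ast$. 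Combining the two bounds forces $f_C(\eta^\ast) = \mathcal{Z}_C^\ast$, which is exactly the statement $\eta^\ast \in \mathcal{X}_C^\ast$.

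There is essentially no hard step here: the entire argument is the observation that $f_C \leq f$ pointwise squeezes $f_C(\eta^\ast)$ between $\mathcal{Z}_C^\ast$ from below (by minimality) and $\mathcal{Z}^\ast = \mathcal{Z}_C^\ast$ from above (by assumption~$(a)$). The only point deserving care is \emph{where} assumption~$(a)$ enters: it is used precisely to upgrade the generic bound $\mathcal{Z}_C^\ast \leq \mathcal{Z}^\ast$ of Lemma~\ref{lem:UpperBounds}(i) into the equality that closes the squeeze. This also clarifies why the converse implication need not hold, matching the paper's ``but not vice versa'': an $\eta^\ast \in \mathcal{X}^\ast$ could lie in $\mathcal{X}_C^\ast$ while having some distances cut off at $\eta^\ast$, so that $f_C(\eta^\ast) < f(\eta^\ast)$ and hence $\mathcal{Z}_C^\ast < \mathcal{Z}^\ast$; thus $(b)$ can hold without $(a)$, so the value equality $(a)$ is genuinely the stronger hypothesis.
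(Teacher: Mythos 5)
Your proof is correct and is essentially the paper's own argument: both establish the squeeze $\mathcal{Z}_C^\ast \leq f_C(\xi^\ast) \leq f(\xi^\ast) = \mathcal{Z}^\ast = \mathcal{Z}_C^\ast$ for an arbitrary $\xi^\ast \in \mathcal{X}^\ast$, using the pointwise bound $f_C \leq f$ together with hypothesis $(a)$. The only differences are notational (you phrase the lower bound via the definition of $\mathcal{Z}_C^\ast$ rather than via a fixed minimizer of $f_C$), and your closing remark correctly explains why the converse fails, consistent with the paper's Example~\ref{ex:XinXc1dim}.
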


\begin{proof}
 Let $\xi^{\ast} \in \mathcal{X}^{\ast}$ be an optimal solution to \eqref{classic} and
  $\eta^{\ast} \in \mathcal{X}^{\ast}_C$ an optimal solution to \eqref{cutoff}. 
  From condition (a) we know that $f(\xi^{\ast}) = f_C(\eta^{\ast})$. In order
  to show that $\xi^{\ast} \in \mathcal{X}^{\ast}_C$, we compute
  \[f_C(\xi^{\ast}) \leq f(\xi^{\ast})=f_C(\eta^{\ast})\leq f_C(\xi^{\ast}). \]
Consequently, $f_C(\xi^{\ast}) = f_C(\eta^{\ast})$ and  hence $\xi^{\ast} \in \mathcal{X}^{\ast}_C$.
\end{proof}

With this result we know, that as soon as condition $(a)$ holds, we can solve the problem $(\Nam)$ and automatically get a solution to $(\Nam_C)$.

The implication $(b) \Rightarrow (a)$ is not true in general, as a simple one-dimensional example shows: 
\begin{example}[Counterexample to (b) $\Rightarrow$ (a)]\label{ex:XinXc1dim}
  Let 4 points in $\R$ be given, $a_1=a_2=0$, $a_3=C + \varepsilon$ and $a_4=-C-\varepsilon$ and let $q=1$. 
  The solution to both problems, \eqref{classic} and \eqref{cutoff} is $\mathcal{X}^{\ast} = \mathcal{X}^{\ast}_C = \lbrace 0 \rbrace$, but $\mathcal{Z}^\ast =2(C+\varepsilon) > 2C = \mathcal{Z}^\ast_C$.
\end{example}

We now show that for a set $\mathcal{A}$ with a large diameter, condition $(a)$ is not met. To this end,
we use that for two points, a barycenter is given by their arithmetic mean.

\begin{lemma}\label{lem:halfisoptimal}
  For two points $a,b \in \mathbb{R}^k$, for any $\ell_p$-norm and for all $q\geq 1$, a minimizer of $\Vert a-x \Vert^q + \Vert b-x \Vert^q$ is $x = \frac{a+b}{2}$.
\end{lemma}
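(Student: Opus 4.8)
The plan is to exploit two structural features of the objective $g(x) := \|a-x\|^q + \|b-x\|^q$: it is \emph{convex}, and it is \emph{symmetric} under the point reflection about the midpoint $m := \frac{a+b}{2}$. Once both are established the claim follows at once, since a convex function that is invariant under reflection through a point attains its minimum at that point. I note in advance that for $q=1$ and $d=\ell_1$ the minimizer need not be unique, so I will only argue that $m$ \emph{is} a minimizer, which is exactly what the statement asserts.

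First I would verify convexity. Each map $x \mapsto \|a-x\|$ is convex, being a norm composed with an affine map, and $t \mapsto t^q$ is convex and non-decreasing on $[0,\infty)$ for $q \geq 1$. Composing a convex non-decreasing outer function with a convex non-negative inner function again yields a convex function, so $x \mapsto \|a-x\|^q$ is convex; the same holds for the $b$-term, and $g$ is convex as a sum of convex functions.

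Next I would establish the symmetry and conclude. Consider the affine involution $\sigma(x) := 2m - x = a+b-x$, which fixes $m$. Since $a - \sigma(x) = -(b-x)$ and $b - \sigma(x) = -(a-x)$, and norms satisfy $\|-v\| = \|v\|$, the two summands of $g$ merely swap under $\sigma$, so $g(\sigma(x)) = g(x)$ for all $x$. Because $m$ is the midpoint of $x$ and $\sigma(x)$, convexity then gives
\[
  g(m) = g\!\left(\tfrac{1}{2}x + \tfrac{1}{2}\sigma(x)\right) \leq \tfrac{1}{2}g(x) + \tfrac{1}{2}g(\sigma(x)) = g(x),
\]
where the final equality uses the symmetry. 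Hence $g(m) \leq g(x)$ for every $x$, i.e.\ $m = \frac{a+b}{2}$ is a minimizer. I expect no genuine obstacle here beyond assembling these pieces; the only point requiring mild care is that only convexity, not \emph{strict} convexity, is available in general, so one cannot conclude uniqueness of the minimizer — but uniqueness is not claimed.
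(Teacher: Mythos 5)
Your proof is correct, but it takes a genuinely different route from the paper's. The paper reduces to one dimension: it asserts that for $\ell_p$-norms the minimizers lie on the segment between $a$ and $b$, normalizes to $a=0$, $b=1$, and minimizes the scalar function $x^q+(1-x)^q$ at $\bar{x}=\tfrac12$. You instead combine convexity of $g(x)=\|a-x\|^q+\|b-x\|^q$ (norm composed with an affine map, then the convex non-decreasing outer function $t\mapsto t^q$ on $[0,\infty)$) with invariance under the reflection $\sigma(x)=a+b-x$, and conclude $g(m)\le\tfrac12 g(x)+\tfrac12 g(\sigma(x))=g(x)$. Your argument is, if anything, tighter and more general: it uses only $\|-v\|=\|v\|$ and convexity, so it applies to \emph{any} norm on $\mathbb{R}^k$, not just $\ell_p$-norms, and it sidesteps the reduction-to-the-segment step, which in the paper is stated without justification and is the one place where the paper's proof actually leans on properties of $\ell_p$ (for $\ell_1$ or $\ell_\infty$ the minimizer set need not be contained in the segment, though it meets it). What the paper's approach buys is an explicit one-dimensional formula that makes the constant $2^{-(q-1)}\|a-b\|^q$ for the minimal value immediately visible, which is what is actually used later in Theorem~\ref{thm:largediam}; your symmetry argument identifies the minimizer but you would still need one extra line of evaluation at $m$ to recover that constant. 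You are also right to flag that only non-uniqueness-safe conclusions are drawn; the lemma claims only that $m$ \emph{is a} minimizer, and both proofs respect that.
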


\begin{proof}
  For $\ell_p$-norms this can be treated as a one-dimensional problem, since the optimal solutions are on the line between $a$ and $b$. W.l.o.g say $a=0, b=1$. Every other case follows by scaling.
  The resulting objective function is $f(x)=x^q + (1-x)^q$ whose minimium is attained at
    $\bar{x}=\frac{1}{2}$.
\end{proof}

The next theorem identifies cases in which condition (a) does not hold; i.e., cases in which the objective
function value of \eqref{cutoff} is strictly smaller than that of \eqref{classic}.

\begin{theo}\label{thm:largediam}
  \eqref{cutoff} has a strictly smaller objective function value than \eqref{classic} in the following
    two cases:
  \begin{itemize}
  \item [(i)] $\diam(\mathcal{A}) > 2C$, $q=1$ and $d$ is a metric,
  \item [(ii)] $\diam(\mathcal{A}) > 2\sqrt[q]{C}$, $q>1$ and $d$ is derived from an $\ell_p$-norm.
\end{itemize}
\end{theo}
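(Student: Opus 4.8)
The plan is to exhibit, under either hypothesis, a single point at which the cutoff objective $f_C$ is \emph{strictly} smaller than the optimal value $\mathcal{Z}^\ast$ of \eqref{classic}. The natural candidate is an optimal solution $\eta^\ast$ of \eqref{classic} itself: since $\mathcal{Z}_C^\ast \leq f_C(\eta^\ast)$ while $f(\eta^\ast)=\mathcal{Z}^\ast$, it suffices to show $f_C(\eta^\ast) < f(\eta^\ast)$. By the splitting \eqref{eq-split} this happens precisely when $\con(\eta^\ast)\neq\emptyset$, i.e.\ when at least one existing point lies at $d^q$-distance strictly larger than $C$ from $\eta^\ast$. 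So the whole argument reduces to producing one such far-away point at the optimum of the uncut problem.

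The key step is to extract that far point from the diameter. I would fix $a_1,a_2\in\mathcal{A}$ realizing $d(a_1,a_2)=\diam(\mathcal{A})$ and apply the triangle inequality at $\eta^\ast$:
\[ d(a_1,\eta^\ast)+d(\eta^\ast,a_2)\ \geq\ d(a_1,a_2)=\diam(\mathcal{A}). \]
In case (i) the right-hand side exceeds $2C$, so $\max\{d(a_1,\eta^\ast),d(a_2,\eta^\ast)\}>C$; since $q=1$ this gives $d^q(\eta^\ast,a_i)>C$ for some $i$, i.e.\ $a_i\in\con(\eta^\ast)$. In case (ii) the right-hand side exceeds $2\sqrt[q]{C}$, so $\max\{d(a_1,\eta^\ast),d(a_2,\eta^\ast)\}>\sqrt[q]{C}$ and again $d^q(\eta^\ast,a_i)>C$ for some $i$. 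In both cases $\con(\eta^\ast)\neq\emptyset$, and then $f_C(\eta^\ast)=f(\eta^\ast)-\sum_{a\in\con(\eta^\ast)}(d^q(\eta^\ast,a)-C)<f(\eta^\ast)=\mathcal{Z}^\ast$, whence $\mathcal{Z}_C^\ast<\mathcal{Z}^\ast$. Note that only the triangle inequality for $d$ is used here, so the genuine difference between the two cases is merely the exponent $q$ and the corresponding threshold ($C$ versus $\sqrt[q]{C}$).

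The main obstacle is ensuring that an optimal solution $\eta^\ast$ of \eqref{classic} actually exists, since the argument evaluates $f_C$ at that specific point rather than merely comparing $f_C$ and $f$ pointwise; a pointwise bound $f_C(x)<f(x)$ alone would not force the two infima to be strictly separated, as the gap could shrink to $0$ along a minimizing sequence. This is exactly where the restriction to an $\ell_p$-norm in case (ii) pays off: for $q\geq 1$ the map $x\mapsto\sum_{a}\|a-x\|^q$ is convex and coercive, so a barycenter exists, and Lemma~\ref{lem:halfisoptimal} records the prototypical instance (the midpoint being optimal for two points), which I would use to locate the minimizer of the two-point contribution $d^q(x,a_1)+d^q(x,a_2)$. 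For case (i), with $q=1$ and a general metric, I would likewise invoke existence of a barycenter (standard for the metrics under consideration) and then run the identical triangle-inequality argument. An alternative that sidesteps existence altogether would be to observe that the diameter condition forces \emph{every} $x\in\mathbb{R}^k$ to have a constant point and then argue via a minimizing sequence together with a uniform lower bound on the gap over the bounded near-optimal region; this is more delicate, and I would prefer the minimizer-based route whenever a barycenter is guaranteed to exist.
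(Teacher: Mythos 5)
Your proof is correct, and its overall skeleton matches the paper's: pick $a,b\in\mathcal{A}$ realizing the diameter, use the triangle inequality to force one of them into $\con(\xi^\ast)$ at a classic optimum $\xi^\ast$, and conclude $\mathcal{Z}_C^\ast\leq f_C(\xi^\ast)<f(\xi^\ast)=\mathcal{Z}^\ast$. The genuine difference is in case (ii). The paper proves the stronger statement that the \emph{sum} satisfies $d^q(a,x)+d^q(x,b)\geq 2^{-(q-1)}\|a-b\|^q>2C$ for all $x$, which requires Lemma~\ref{lem:halfisoptimal} (the midpoint is optimal for two points) and hence the $\ell_p$-norm structure. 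You instead observe that $d(a,x)+d(b,x)>2\sqrt[q]{C}$ already forces $\max\{d(a,x),d(b,x)\}>\sqrt[q]{C}$, so $d^q(a_i,x)>C$ for some $i$ by monotonicity of $t\mapsto t^q$. This is all the theorem needs, it bypasses Lemma~\ref{lem:halfisoptimal} entirely, and it uses only the triangle inequality --- so your argument in fact proves case (ii) for an arbitrary metric $d$, not just one derived from an $\ell_p$-norm, which slightly strengthens the result. Your worry about the existence of a minimizer is legitimate in principle (the paper silently assumes $\mathcal{X}^\ast\neq\emptyset$, as it does throughout), but the fallback you call ``delicate'' is actually immediate: since $\max\{d(a,x),d(b,x)\}\geq\frac{1}{2}\diam(\mathcal{A})$ for \emph{every} $x$, one gets the uniform bound $f(x)\geq f_C(x)+\bigl(\tfrac{1}{2}\diam(\mathcal{A})\bigr)^q-C$ with a positive constant independent of $x$, and taking infima on both sides yields $\mathcal{Z}^\ast>\mathcal{Z}_C^\ast$ without ever invoking a minimizer.
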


\begin{proof}
Since $\mathcal{A}$ is finite there exist two points $a,b \in \mathcal{A}$ such that $d(a,b) = \diam(\mathcal{A}) > 2\sqrt[q]{C}$. 
\begin{itemize}
\item[ad (i):] for any point $x \in \mathbb{R}^k$ the triangle inequality directly gives
  $d(x,a) + d(x,b) \geq d(a,b) > 2C$.
\item[ad (ii):] we use Lemma~\ref{lem:halfisoptimal}, namely 
  that a minimizer of $\Vert a-x \Vert^q + \Vert b-x \Vert^q$ is given by $\bar{x} = \frac{a+b}{2}$. 
  We receive that for any point $x \in \R^k$: 
  \begin{eqnarray*}
    d^q(a,x) + d^q(x,b) & = & \Vert a-x \Vert^q + \Vert x-b \Vert^q
    \geq \Vert a- \bar{x} \Vert^q + \Vert \bar{x} - b \Vert^q
    \\ 
     & = & 2^{-(q-1)} \Vert a-b \Vert^q > 2^{-(q-1)}\cdot 2^qC = 2C.
\end{eqnarray*}                              
\end{itemize}
In both cases, at least one of the distances $d^q(a,x)$ or $d^q(x,b)$ is larger than the cutoff $C$
for any $x \in \R^k$. This holds especially for a barycenter $\xi^\ast \in \mathcal{X}^{\ast}$. Therefore
\begin{align*}
  f(\xi^\ast) = \sum_{a\in \cA} d^q(\xi^\ast,a) > \sum_{a\in \cA} \min \lbrace d^q(\xi^\ast,a), C\rbrace = f_C(\xi^\ast).
\end{align*}
Let $\eta^\ast \in \mathcal{X}^{\ast}_C$. We know that $f_C(\xi^\ast) \geq f_C(\eta^\ast)$.
Hence,
\[ \cZ^*=f(\xi^\ast) > f_C(\xi^\ast) \geq f_C(\eta^\ast)=\cZ^*_C.\]
\end{proof}

The next theorem identifies a setting in which condition (a) and hence also condition (b) hold, i.e., in which
\eqref{classic} can be used to obtain an optimal solution to \eqref{cutoff}.

\begin{theo}\label{thm:ignorecut}
  Consider a location problem \eqref{classic} which satisfies property \eqref{convAssumption}. 
  If $\diam(\mathcal{A}) \leq \sqrt[q]{C}$, then
  $\mathcal{Z}^\ast = \mathcal{Z}^\ast_C$ and $\mathcal{X}^{\ast} \subseteq \mathcal{X}^{\ast}_C$.
\end{theo}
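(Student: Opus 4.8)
The plan is to establish condition (a), $\mathcal{Z}^\ast = \mathcal{Z}^\ast_C$, and then to invoke Lemma~\ref{lem:atob} to obtain condition (b), $\mathcal{X}^\ast \subseteq \mathcal{X}^\ast_C$, for free. Since the upper bound $\mathcal{Z}^\ast_C \leq \mathcal{Z}^\ast$ is already guaranteed by Lemma~\ref{lem:UpperBounds}(i), the only thing left to prove for (a) is the reverse inequality $\mathcal{Z}^\ast_C \geq \mathcal{Z}^\ast$.

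First I would produce optimal solutions of both problems that lie inside $\conv(\mathcal{A})$. Property \eqref{convAssumption} applied to $A=\mathcal{A}$ yields an optimal solution $\eta^\ast$ of \eqref{classic} with $\eta^\ast \in \conv(\mathcal{A})$, and Lemma~\ref{cor:convexhulls} yields an optimal solution $\xi^\ast$ of \eqref{cutoff} with $\xi^\ast \in \conv(\mathcal{A})$. Both solutions are then available as points of the convex hull, which is exactly where the next step can be applied.

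The key step is to observe that on $\conv(\mathcal{A})$ the cutoff never becomes active. For any $x \in \conv(\mathcal{A})$, write $x = \sum_j \lambda_j a_j$ as a convex combination of points $a_j \in \mathcal{A}$; then for every $a \in \mathcal{A}$, using subadditivity and homogeneity of the norm,
\[ d(x,a) = \Bigl\| \sum_j \lambda_j (a_j - a) \Bigr\| \leq \sum_j \lambda_j \|a_j - a\| \leq \diam(\mathcal{A}) \leq \sqrt[q]{C}, \]
so $d^q(x,a) \leq C$ and hence $\min\{d^q(x,a),C\} = d^q(x,a)$ for all $a$, i.e. $f_C(x) = f(x)$. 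In particular $f_C(\xi^\ast) = f(\xi^\ast)$, so that optimality of $\eta^\ast$ for \eqref{classic} gives
\[ \mathcal{Z}^\ast_C = f_C(\xi^\ast) = f(\xi^\ast) \geq f(\eta^\ast) = \mathcal{Z}^\ast, \]
which together with Lemma~\ref{lem:UpperBounds}(i) establishes (a). Lemma~\ref{lem:atob} then delivers (b).

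I expect the main obstacle to be the diameter bound in the displayed inequality: this is precisely the point where the norm-metric structure enters, since for a general metric on $\mathbb{R}^k$ a point of $\conv(\mathcal{A})$ need not lie within $\diam(\mathcal{A})$ of every $a \in \mathcal{A}$. The whole argument hinges on \eqref{convAssumption} placing an optimal solution \emph{inside} the convex hull, where this bound is available. I would avoid routing through Theorem~\ref{thm:2r0}, as an enclosing ball of $\mathcal{A}$ can have radius close to $\diam(\mathcal{A})$ rather than $\tfrac{1}{2}\diam(\mathcal{A})$, so the hypothesis $2r_0 \leq \sqrt[q]{C}$ of that theorem need not follow from $\diam(\mathcal{A}) \leq \sqrt[q]{C}$.
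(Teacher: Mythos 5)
Your proposal is correct and follows essentially the same route as the paper's proof: both place optimal solutions of \eqref{classic} and \eqref{cutoff} inside $\conv(\mathcal{A})$ via \eqref{convAssumption} and Lemma~\ref{cor:convexhulls}, observe that the diameter bound makes the cutoff inactive there so that $f = f_C$ on $\conv(\mathcal{A})$, and then compare objective values before invoking Lemma~\ref{lem:atob}. The only difference is cosmetic: you spell out the convex-combination estimate $d(x,a)\leq\diam(\mathcal{A})$ that the paper asserts without detail, and you shorten the final chain by citing Lemma~\ref{lem:UpperBounds}(i) for one of the two inequalities.
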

  
\begin{proof}
  Let $\xi^\ast \in \mathcal{X}^{\ast}$ be an optimal solution to \eqref{classic} and
  $\eta^{\ast} \in \mathcal{X}^{\ast}_C$ be an optimal solution to \eqref{cutoff}.
  By \eqref{convAssumption} and Corollary~\ref{cor:convexhulls} we may choose both,
  $\xi^\ast$ and $\eta^\ast \in \conv(\mathcal{A})$.

  For any point $x \in \conv(\mathcal{A})$ and for all $a \in \mathcal{A}$ we have that
  $d(x,a) \leq \diam(\mathcal{A}) \leq \sqrt[q]{C}$, therefore $d^q(x,a) \leq C$ and thus $f(x) = f_C(x)$. In particular, we receive
  \begin{eqnarray*}
    f(\xi^\ast) & = & f_C(\xi^\ast)\\
    f_C(\eta^{\ast}) & = & f(\eta^{\ast}).
  \end{eqnarray*}                         
  Hence we obtain  
  \[ f_C(\xi^\ast) = f(\xi^\ast) \leq f(\eta^{\ast}) = f_C(\eta^{\ast}) \leq f_C(\xi^\ast),\]
  i.e., $\cZ^*=f(\xi^\ast) = f_C(\eta^{\ast})=\cZ^*_C$. By Lemma~\ref{lem:atob}, we also get
  $\mathcal{X}^{\ast} \subseteq \mathcal{X}^{\ast}_C$.
\end{proof}

We hence know that $\cZ^*=\cZ^*_C$ if the diameter of the set $\cA$ is smaller or equal to $\sqrt[q]{C}$ and that
$\cZ^*>\cZ^*_C$ if the diameter is greater than $2\sqrt[q]{C}$. The following examples demonstrate that
for the remaining cases, $\sqrt[q]{C} < \diam(\cA) \leq 2\sqrt[q]{C}$ everything may happen.

\begin{example}\label{ex:XnotinXc1dim}[Example where (a) holds for $q=1$ and $C < \diam(\cA) \leq 2C$]
  Let $n+2$ points in $\mathbb{R}$ be given, $n \geq 2$. Say $a_1=\ldots=a_n=0, a_{n+1}= C-\varepsilon, a_{n+2}= -(C-\varepsilon)$. The diameter of this set is $\diam(\mathcal{A}) = d(a_{n+1},a_{n+2}) = 2C-2\varepsilon$.
  Now $\mathcal{X}^{\ast} = \mathcal{X}^{\ast}_C = \lbrace 0 \rbrace$ and $\mathcal{Z}^\ast = \mathcal{Z}^\ast_C$.
\end{example} 

In this case the solution of $(\Nam)$ is also a solution of $(\Nam_C)$. But there are simple examples, where $\mathcal{X}^{\ast} \not\subseteq \mathcal{X}^{\ast}_C$.

\begin{example}\label{ex:XnotinXc1norm}[Example where (b) does not hold for $q=1$, $d=\ell_1$ and $C < \diam(\cA) \leq 2C$]
  \begin{center}
    \begin{tikzpicture}[scale = 1]
      \draw [draw=black] (-1.6,-1.6) rectangle ++(7.2,3.2);
      \tkzInit[xmax=5,ymax=1.5,xmin=-1.5,ymin=-1.5]
      \draw [->, color=gray, dotted] (-1.4,0) -- (5,0);
      \draw [->, color=gray, dotted] (0,-1.4) -- (0,1.5);
      \node  (1) at (-1, 1) {$a_1$};
      \node  (2) at (-1, -1) {$a_2$};
      \node  (3) at (0, 0) {$a_3$};
      \node  (4) at (4, 0.5) {$a_4$};
      \node  (5) at (4, -0.5) {$a_5$};
      \node  (6) at (-1, 0) {$\eta$};
    \end{tikzpicture}
  \end{center} 

Let $5$ points $a_1$ to $a_5$ in $\mathbb{R}^2$ be given with coordinates $a_1=(-\frac{\varepsilon}{2}/\frac{\varepsilon}{2}), a_2=(-\frac{\varepsilon}{2}/ -\frac{\varepsilon}{2}), a_3=(0/0), a_4=(C-\frac{\varepsilon}{4}/\frac{\varepsilon}{4}), a_5=(C-\frac{\varepsilon}{4}/-\frac{\varepsilon}{4})$ for some $\varepsilon > 0$.
The placement is pictured above. We get the following distances: $d(a_3,a_4)=d(a_3,a_5)=C$, $d(a_1,a_3)=d(a_2,a_3) = \varepsilon$. The diameter of this set is $d(a_1,a_5) = d(a_2,a_4) = d(a_1,a_3) + d(a_3,a_5) = C+\varepsilon$. The optimal solution of \eqref{classic} is $a_3$ 
with $\mathcal{Z}^\ast = 2C + 2\varepsilon = f_C(a_3)$. For $\eta = (-\frac{\varepsilon}{2}/0)$ we get $f_C(\eta) = \frac{3\varepsilon}{2}+2C < f_C(a_3)$. Thus $a_3 \in \mathcal{X}^\ast$ but $a_3 \not\in \mathcal{X}_C^\ast$.

\end{example}

\begin{example}\label{ex:triangle}[Example where (b) does not hold for $q=1$, $d=\ell_2$ and $C < \diam(\cA) \leq 2C$]
  \begin{center}
    \begin{tikzpicture}[scale = 1]
      \draw [draw=black] (-1.6,-1.6) rectangle ++(7.2,3.2);
      \tkzInit[xmax=5,ymax=1.5,xmin=-1.5,ymin=-1.5]
      \draw [->, color=gray, dotted] (-1.4,0) -- (5,0);
      \draw [->, color=gray, dotted] (0,-1.4) -- (0,1.5);
      \node  (1) at (-1, 1) {$a_1$};
      \node  (2) at (-1, -1) {$a_2$};
      \node  (3) at (0, 0) {$\xi$};
      \node  (4) at (4, 0) {$a_3$};
      \node  (5) at (-1, 0) {$\eta$};
    \end{tikzpicture}
  \end{center}

Let $3$ points $a_1$ to $a_3$ in $\mathbb{R}^2$ be given with coordinates $a_1=(-\frac{C}{2}/\frac{\sqrt{3}C}{2}),a_2=(-\frac{C}{2}/-\frac{\sqrt{3}C}{2}),a_3=(C/0)$. 
They form an equilateral triangle with sidelength $\sqrt{3}C$. The placement is sketched above. 
The diameter of this set is equal to the length of one side of the triangle which is larger than $C$ but smaller than $2C$. 
The optimal solution to \eqref{classic} is $\xi = (0/0)$ with $\mathcal{Z}^\ast=3C = f_C(\xi)$. But for $\eta = (-\frac{C}{2}/0)$ we get $f_C(\eta) = (\sqrt{3} + 1)C < f_C(\xi)$. Therefore $\xi \in \mathcal{X}^\ast$ but $\xi \not\in \mathcal{X}_C^\ast$. 
(Optimal solutions to \eqref{cutoff} would be each of the points $a_1$ to $a_3$ with $\mathcal{Z}^\ast_C = 2C$.)
\end{example}

\begin{example}\label{ex:triangleq2}[Example where (b) is not true for $q=2$, $d=\ell_2$ and $\sqrt{C} < \diam(\mathcal{A}) \leq 2\sqrt{C}$]
Take the same situation as in Example~\ref{ex:triangle}, but for simplicity set $C=1$. 
The diameter of this set is equal to the length of one side of the triangle which is $\sqrt[2]{3} \approx 1.732$ and therefore smaller than $2$. 
The optimal solution to \eqref{classic} is $\xi = (0/0)$ with $\mathcal{Z}^\ast =3 = f_C(\xi)$. 
But for $\eta = \left(-\frac{1}{2}/0 \right)$ we get $f_C(\eta) = \left(2\cdot \left(\frac{\sqrt{3}}{2}\right)^2 + 1\right) = \frac{5}{2} < f_C(\xi)$. 
Therefore $\xi \in \mathcal{X}^\ast$ but $\xi \not\in \mathcal{X}_C^\ast$.
\end{example}

We remark that in the last three examples above we have $f(\xi^\ast) = f_C(\xi^\ast)$ for the
(respective) optimal solution $\xi^\ast$ to \eqref{classic} but still $\xi^\ast \not\in \mathcal{X}^{\ast}_C$,
i.e., this solution is not optimal for \eqref{cutoff}. 
\medskip

In the following table we summarize the results for metrics $d$ and $q\geq 1$: 
\begin{table}[!ht]
  \begin{tabular}{|c|c|c|c|}
    \hline
    & $\diam(\mathcal{A}) \leq \sqrt[q]{C}$ & $\sqrt[q]{C} < \diam(\mathcal{A}) \leq 2\sqrt[q]{C}$ & $\diam(\mathcal{A}) > 2\sqrt[q]{C}$
    \\
    \hline
    $(a)\ \mathcal{Z}^\ast = \mathcal{Z}^\ast_C$ & holds if \eqref{convAssumption}, & may or may not hold,  & never for $q=1$,
    \\
     & see Thm~\ref{thm:ignorecut} &  see Examples~\ref{ex:XnotinXc1dim} to~\ref{ex:triangleq2} & never for $q>1$ for $\ell_p$-norms,
    \\
     &   &  & see Thm~\ref{thm:largediam}
    \\
    \hline
    $(b)\ \mathcal{X}^\ast \subseteq \mathcal{X}^\ast_C$ & holds if \eqref{convAssumption}, & may or may not hold, & may or may not hold,
    \\
    & follows from Lem~\ref{lem:atob} & see Examples~\ref{ex:XnotinXc1dim} to~\ref{ex:triangleq2} & see Examples~\ref{ex:XinXc1dim} and~\ref{ex:XnotinXc1norm}
    \\
    \hline
  \end{tabular}
  \label{table:summary1}
\end{table}

Furthermore, we have seen in Theorem~\ref{thm:2r0} that (a) and (b) are always true if $2r_0 \leq \sqrt[q]{C}$, where $r_0$ is the radius of a smallest ball $B$ such that $\mathcal{X}^\ast(A) \cap B \neq \emptyset$ for all $A \subseteq \mathcal{A}$.
\medskip

For a very small cutoff $C$ relative to the distances between the points of $\mathcal{A}$ 
and for a large cutoff compared to the diameter of $\mathcal{A}$ we can say something about the optimal solutions to \eqref{cutoff}:

\begin{lemma}
  Let $\xi^\ast$ be an optimal solution to \eqref{cutoff}.
  \begin{itemize}
    \item [(i)] If $C < \frac{1}{2^q} \underset{a_1\neq a_2 \in \mathcal{A}}{\min} d^q(a_1,a_2)$ we have $\xi^\ast \in \mathcal{A}$ and $|\act(\xi^\ast)| = 1$. 
    \item [(ii)] If $\sqrt[q]{C} \geq 2\cdot \diam(\mathcal{A})$ we have $|\act(\xi^\ast)| = n$, implying $\mathcal{Z}^\ast = \mathcal{Z}^\ast_C$ and $\mathcal{X}^\ast = \mathcal{X}^\ast_C$. 
  \end{itemize}
\end{lemma}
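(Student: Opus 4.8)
The plan is to handle the two parts separately, since they concern opposite regimes of $C$, but in both the workhorse is the triangle inequality together with the splitting \eqref{eq-split}. For part~(i) I would first rewrite the hypothesis: setting $\delta := \min_{a_1\neq a_2\in\mathcal{A}} d(a_1,a_2)$, the assumption $C < 2^{-q}\delta^q$ is the same as $2\sqrt[q]{C} < \delta$. The decisive observation is that \emph{no} point can have two active points: if $a_1,a_2\in\act(x)$ were distinct, then $d(x,a_i)\leq\sqrt[q]{C}$ for $i=1,2$, so the triangle inequality would force $d(a_1,a_2)\leq 2\sqrt[q]{C} < \delta$, contradicting the definition of $\delta$. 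Hence $|\act(x)|\leq 1$ for every $x$, and by \eqref{eq-split} the objective collapses to $f_C(x) = C\,(n-|\act(x)|) + \sum_{a\in\act(x)} d^q(x,a)$. Evaluating at any $a\in\mathcal{A}$ gives $\act(a)=\{a\}$ and $f_C(a)=C(n-1)$, while $|\act(x)|=0$ yields $f_C(x)=Cn$ and $|\act(x)|=1$ with $\act(x)=\{a\}$ yields $f_C(x)=C(n-1)+d^q(x,a)\geq C(n-1)$, with equality iff $x=a$. Thus $\mathcal{Z}^*_C=C(n-1)$, and every optimizer must satisfy $|\act(\xi^\ast)|=1$ and $\xi^\ast\in\mathcal{A}$.

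For part~(ii) the assumption is $\sqrt[q]{C}\geq 2\diam(\mathcal{A})$; I would discard the trivial case $\diam(\mathcal{A})=0$ and assume $\diam(\mathcal{A})>0$. Fixing any $a_0\in\mathcal{A}$ records the cheap upper bound $\mathcal{Z}^*_C\leq f_C(a_0)\leq f(a_0)\leq (n-1)\diam(\mathcal{A})^q$. The crux is to show no optimal $\xi^\ast$ can leave a point constant. Suppose $a\in\con(\xi^\ast)$, so $d(\xi^\ast,a)>\sqrt[q]{C}\geq 2\diam(\mathcal{A})$; then for every $a'\in\mathcal{A}$ the triangle inequality gives $d(\xi^\ast,a')\geq d(\xi^\ast,a)-d(a,a') > 2\diam(\mathcal{A})-\diam(\mathcal{A}) = \diam(\mathcal{A})$. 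Since also $C\geq 2^q\diam(\mathcal{A})^q > \diam(\mathcal{A})^q$, each summand obeys $\min\{d^q(\xi^\ast,a'),C\} > \diam(\mathcal{A})^q$, whence $f_C(\xi^\ast) > n\,\diam(\mathcal{A})^q > (n-1)\diam(\mathcal{A})^q \geq \mathcal{Z}^*_C$, a contradiction. Therefore $\act(\xi^\ast)=\mathcal{A}$ and $f_C(\xi^\ast)=f(\xi^\ast)$.

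The remaining two identities follow by running the identical far-point estimate on a classical optimizer. Taking $\eta^\ast\in\mathcal{X}^\ast$ and using $f(\eta^\ast)=\mathcal{Z}^\ast\leq (n-1)\diam(\mathcal{A})^q$ shows $\act(\eta^\ast)=\mathcal{A}$ and $f_C(\eta^\ast)=f(\eta^\ast)$. Combining the two computations, $\mathcal{Z}^*_C = f(\xi^\ast)\geq\mathcal{Z}^\ast$ and $\mathcal{Z}^\ast = f_C(\eta^\ast)\geq\mathcal{Z}^*_C$ give $\mathcal{Z}^\ast=\mathcal{Z}^*_C$; and since every member of either optimal set has all points active, $f$ and $f_C$ coincide there, so $\mathcal{X}^\ast=\mathcal{X}^*_C$.

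I expect the main obstacle to be the natural but doomed attempt to bound the cost of a single constant point from below by $C$ and pit it against the upper bound: this fails once $n$ is large, because $C$ may be far smaller than $\mathcal{Z}^*_C\leq (n-1)\diam(\mathcal{A})^q$. The resolution is to notice that one far point automatically drags \emph{all} points past distance $\diam(\mathcal{A})$ from $\xi^\ast$, so the full sum is bounded below by $n\,\diam(\mathcal{A})^q$, which already exceeds the attainable value $(n-1)\diam(\mathcal{A})^q$; it is this counting over all $n$ terms, rather than a single term, that closes the argument.
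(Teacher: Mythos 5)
Your proof is correct and takes essentially the same route as the paper's: in (i) the triangle inequality shows no point can have two distinct active points, so the value $(n-1)C$ attained on $\mathcal{A}$ is optimal and attained only there, and in (ii) a single constant point would push every distance beyond $\diam(\mathcal{A})$, contradicting the upper bound $(n-1)\diam(\mathcal{A})^q$. The only cosmetic difference is in wrapping up (ii), where the paper invokes Lemma~\ref{lemma1} and Lemma~\ref{lem:atob} to conclude $\mathcal{X}^\ast=\mathcal{X}^\ast_C$ while you rerun the far-point estimate on a classical optimizer and sandwich the two values directly; both close the argument.
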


\begin{proof}
  \begin{itemize}
    \item [(i)] We show that there is no better barycenter than a point $a \in \mathcal{A}$. The cutoff is smaller than the shortest distance between two points of $\mathcal{A}$. Therefore for any $a\in \mathcal{A}:$ $f_C(a,\mathcal{A}) = (n-1)\cdot C$. 
    \\
    Suppose $|\act_C(a)|=1$ and there is a point $\xi \in \mathbb{R}^k: f_C(\xi,\mathcal{A}) < (n-1)\cdot C$, then $\vert \act(\xi) \vert \geq 2$. 
    Take two different points $a_1,a_2 \in \act(\xi)$. 
    Then $d(a_1,\xi) + d(\xi,a_2) \geq d(a_1,a_2) > 2\sqrt[q]{C}$. Therefore one of the distances $d(a_1,\xi), d(\xi,a_2)$ is larger than $\sqrt[q]{C}$ and thus one of the distances $d^q(a_1,\xi), d^q(\xi,a_2)$ is larger than $C$. This contradicts the assumption that both points are in $\act(\xi)$. 

    \item [(ii)] We prove that if $|\act(\xi^\ast)| < n$, then $\xi^\ast$ is not optimal for \eqref{cutoff}. Suppose $|\act(\xi^\ast)| < n$. Then there is a point $a_1 \in \mathcal{A}$ such that $d^q(\xi^\ast, a_1) > C \geq 2^q\cdot \diam(\mathcal{A})^q$ and therefore 
    $$d(\xi^\ast, a_1) > 2\cdot \diam(\mathcal{A}).$$ 
    
    Thus for any $a \in \mathcal{A}$
    $$ d(\xi^\ast,a) \geq \underbrace{d(\xi^\ast,a_1)}_{>2 \diam(\mathcal{A})} - \underbrace{d(a_1,a)}_{\leq \diam(\mathcal{A})} > \diam(\mathcal{A}).$$

    We know now that for all $a\in \mathcal{A}$: $d^q(\xi^\ast,a) \geq \diam(\mathcal{A})^q$. 
    Therefore 
    \\
    $f_C(\xi^\ast) \geq n \cdot \diam(\mathcal{A})^q > (n-1) \cdot \diam(\mathcal{A})^q \geq f_C(a_1)$, which means $\xi^\ast$ is not optimal for \eqref{cutoff}. 
    
    Thus under the conditions of $(ii)$ we do have $|\act(\xi^\ast)|=n$. So we know that $\xi^\ast$ is a barycenter of all points in $\mathcal{A}$ and therefore an optimal solution to \eqref{classic} with $\mathcal{Z}^\ast_C = f_C(\xi^\ast) = f(\xi^\ast) = \mathcal{Z}^\ast$. Since $\xi^\ast \in \mathcal{X}^\ast_C$ was arbitrary, we obtain $\mathcal{X}^\ast_C \subseteq \mathcal{X}^\ast$ and Lemma~\ref{lem:atob} implies $\mathcal{X}^\ast \subseteq \mathcal{X}^\ast_C$, hence $\mathcal{X}^\ast = \mathcal{X}^\ast_C$. 
  \end{itemize}
\end{proof}

\section{Comparing \Nam$_C$ with \Nam$_{C,\alpha}$}\label{sec:emptyCenter}

From an applied point of view it might be interesting to consider the empty barycenter as a valid solution. The barycenter of a set of points is 
representative for said set. Having no barycenter can then be interpreted as "the points are so widely spread, that no single point represents them".

After solving \eqref{cutoff} it is easy to check if the empty barycenter is a better solution. But it would save computation time if we knew before the calculations that the empty barycenter \emph{must} be better. In this section we compare \eqref{cutoff} with \eqref{empty} and work out criteria under which we know that either the empty barycenter is the optimal solution to \eqref{empty} or that the empty barycenter cannot be the optimal solution.
\\[1em]
If the empty barycenter is not the best solution to \eqref{empty} we know that the points of $\mathcal{A}$ must contain a cluster which has a certain \emph{density}. That means that there must exist a subset $A \subseteq \mathcal{A}$ with $\diam(A) \leq 2\sqrt[q]{C}$ containing at least $(1-\alpha)\cdot n$ points:
\begin{lemma}\label{lem:largeActiveXi}
  The empty barycenter is an optimal solution if there is no ball $B$ with radius $\sqrt[q]{C}$ that contains more than $(1-\alpha)\cdot n$ points.
\end{lemma}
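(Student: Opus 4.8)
The plan is to show directly that for every non-empty candidate $x \in \mathbb{R}^k$ the objective $f_C(x)$ is at least the cost $n\alpha C$ of the empty barycenter, so that $\emptyset$ is optimal for \eqref{empty}. The main tool is the decomposition \eqref{eq-split} together with the density hypothesis, which I would first translate into an upper bound on the number of active points.

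First I would fix an arbitrary $x \in \mathbb{R}^k$ and observe that $\act_C(x) = \{a \in \mathcal{A} : d^q(x,a) \leq C\} = \{a \in \mathcal{A} : d(x,a) \leq \sqrt[q]{C}\} = \mathcal{A} \cap B(x, \sqrt[q]{C})$, i.e. the active points are exactly the points of $\mathcal{A}$ lying in the closed ball of radius $\sqrt[q]{C}$ centered at $x$. By hypothesis no such ball contains more than $(1-\alpha)n$ points, so $|\act_C(x)| \leq (1-\alpha)n$, and therefore $|\con_C(x)| = n - |\act_C(x)| \geq \alpha n$.

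Next I would invoke the split \eqref{eq-split}, namely $f_C(x) = f(x, \act_C(x)) + C\cdot|\con_C(x)|$. Since $f(x,\act_C(x)) = \sum_{a \in \act_C(x)} d^q(x,a) \geq 0$, this yields $f_C(x) \geq C\cdot|\con_C(x)| \geq C\alpha n = f_{C,\alpha}(\emptyset)$. As $x$ was arbitrary, no non-empty location beats the empty barycenter, whence $\mathcal{Z}_{C,\alpha}^\ast = n\alpha C$ is attained by $\emptyset$ and the empty barycenter is optimal.

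I do not expect a genuine obstacle here: the argument is a counting estimate feeding into the active/constant decomposition. The only points requiring care are the precise correspondence between $\act_C(x)$ and the closed ball of radius $\sqrt[q]{C}$ (matching the paper's definition of $B(x,r)$), and verifying that the non-strict bound $|\act_C(x)| \leq (1-\alpha)n$ already forces $|\con_C(x)| \geq \alpha n$, so that the constant contribution alone dominates the empty-barycenter cost.
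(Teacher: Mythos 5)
Your proof is correct and follows essentially the same route as the paper: both identify $\act_C(x)$ with the points of $\mathcal{A}$ in a ball of radius $\sqrt[q]{C}$, deduce $|\con_C(x)| \geq \alpha n$ from the density hypothesis, and conclude via the active/constant split that the constant contribution alone already matches $n\alpha C$. The only cosmetic difference is that you argue directly for arbitrary $x$ while the paper phrases it as a contradiction with an optimal $\xi^\ast$; your version is, if anything, marginally cleaner since it does not presuppose that a minimizer in $\mathbb{R}^k$ exists.
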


\begin{proof}
Suppose such a ball does not exist. Let $\xi^\ast \in \mathbb{R}^k$ be an optimal solution to \eqref{empty}. 
We know for all $a \in \act(\xi^\ast)$ that $d(\xi^\ast,a) \leq \sqrt[q]{C}$. 
Therefore there exists a ball $B$ with radius $\sqrt[q]{C}$ that contains all points of $\act(\xi^\ast)$ and no points of $\con(\xi^\ast)$. 
Since, by assumption, $B$ can not contain more than $(1-\alpha)\cdot n$ points, we know that $\act(\xi^\ast)$ does not contain more than $(1-\alpha)\cdot n$ points, i.e., $\vert \act(\xi^\ast) \vert \leq (1-\alpha)\cdot n$.
But then $\vert \con(\xi^\ast)\vert \geq n - (1-\alpha)\cdot n = \alpha\cdot n$. 
So the points in $\con(\xi^\ast)$ alone contribute at least $\alpha\cdot C \cdot n = f_{C,\alpha}(\emptyset, \mathcal{A})$ to $f_{C,\alpha}(\xi^\ast, \mathcal{A})$, hence $f_{C,\alpha}(\xi^\ast, \mathcal{A}) \geq \alpha\cdot C \cdot n = f_{C,\alpha}(\emptyset, \mathcal{A})$. 
If $\emptyset \notin \mathcal{X}^\ast_{C, \alpha}$, then $f_{C,\alpha}(\xi^\ast,\mathcal{A}) < \alpha\cdot C \cdot n$, which contradicts the optimality of $\xi^\ast$. 
\end{proof}

In Lemma~\ref{lem:largeActiveXi} we could argue with $\act(\xi^\ast)$ alone.  If $\vert \act(\xi^\ast) \vert \leq (1-\alpha)\cdot n$, then the empty barycenter is an optimal solution. 
But we do not know $\xi^\ast$ and therefore $\act(\xi^\ast)$ before solving \eqref{cutoff}. 
Checking if such a ball exists might in general be computationally more easy than solving \eqref{cutoff}.
E.g. for data in $\mathbb{R}^2$ and the Euclidean distance, i.e. $d=\ell_2$, $q=1$, it can be checked in $\mathcal{O}(n^2)$ time if such a ball exists, see \cite{chazelle1986circle}.
\medskip

We further improve Algorithm~\ref{algo:improvement} by using the empty barycenter as an upper bound on the optimal solution to \eqref{empty}. For the empty barycenter we know directly the value $f_{C,\alpha}(\emptyset, \mathcal{A}) = n\cdot \alpha \cdot C$ and initialize the algorithm with this value as current best solution.

\begin{algorithm}[!ht]
  \caption{Second improvement of Algorithm~\ref{algo:Drezner}}\label{algo:improvement2}
  \input{Set $\mathcal{A} = \lbrace a_1, \ldots, a_n \rbrace$, cutoff $C>0$, $\alpha >0$}
  \output{A barycenter $\xi^{\ast}$ of $\eqref{empty}$, objective function value $\mathcal{Z}_{C,\alpha}^\ast$}
  \BlankLine
  Set $\xi^{\ast} \leftarrow \emptyset$, $\mathcal{Z}_{C,\alpha}^\ast \leftarrow n\cdot \alpha \cdot C$, $\mathcal{B} \leftarrow \mathcal{A}$\;
  \For{$i \leftarrow 1$ \KwTo $(n-1)$}{
    $continue$ $\leftarrow$ $true$\;
    $m \leftarrow \vert \lbrace a \in \mathcal{B}\ \vert \ d^q(a_i,a) \leq 2^qC \rbrace \vert$\;
    \If{$(n-m) \cdot C \geq \mathcal{Z}_{C,\alpha}^\ast$}{
      $\mathcal{B} \leftarrow \mathcal{B}\setminus \lbrace a_i \rbrace$\;
      $continue$ $\leftarrow$ $false$\;
    }
    \If{continue}{
      Inner Loop\;
    }
  }
  \Return{$\xi^{\ast}, \mathcal{Z}_{C,\alpha}^\ast$}
\end{algorithm}

\begin{theo}\label{lem:reduction2}
  Let $\mathcal{A} \subseteq\mathbb{R}^2$, let $d$ be a norm-metric and say we can solve \eqref{classic} in $h(n)$ time. Then Algorithm~\ref{algo:improvement2} solves the problem \eqref{cutoff} in $\mathcal{O}(n^2\cdot h(n))$ time.
\end{theo}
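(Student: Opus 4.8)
The plan is to prove the statement in the same two parts used for Theorem~\ref{lem:reduction1} — first the runtime bound, then correctness — since Algorithm~\ref{algo:improvement2} is structurally Algorithm~\ref{algo:improvement} with a single change in its initialisation, so most of the earlier argument transfers verbatim.

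For the runtime I would observe that the only operations added to the inner-loop engine of Algorithm~\ref{algo:Drezner} are, per outer index $i$, the computation of $m = |\{a \in \mathcal{B} : d^q(a_i,a) \leq 2^q C\}|$ and the single comparison in line~5. Computing $m$ is one scan over $\mathcal{B} \subseteq \mathcal{A}$ and costs $\mathcal{O}(n)$, so over all $i$ this bookkeeping contributes $\mathcal{O}(n^2)$, which is dominated by the $\mathcal{O}(n^2 \cdot h(n))$ cost of the inner loops. Hence the total runtime is $\mathcal{O}(n^2 \cdot h(n))$, word for word as in Theorem~\ref{lem:reduction1}.

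For correctness I would reduce everything to Lemmas~\ref{lemma1} and~\ref{lem:skipa}. By Lemma~\ref{lemma1} an optimal solution $\xi^\ast$ of \eqref{cutoff} is a barycenter of its own active set $\act(\xi^\ast)$, and Algorithm~\ref{algo:Drezner} enumerates every candidate active set, so it suffices to show that no outer index $i$ with $a_i \in \act(\xi^\ast)$ is ever skipped in line~5. I would maintain as an invariant that the stored value $\mathcal{Z}_{C,\alpha}^\ast$ is at all times an upper bound on the value against which Lemma~\ref{lem:skipa} tests, and note that the skipping test $(n-m)\cdot C \ge \mathcal{Z}_{C,\alpha}^\ast$ is exactly the hypothesis $C\cdot|A'| > z$ of Lemma~\ref{lem:skipa} with $A' = \{a' : d^q(a_i,a') > 2^q C\}$ and $|A'| = n-m$. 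Lemma~\ref{lem:skipa} then certifies that each skipped $a_i$ lies outside $\act(\xi^\ast)$, so removing it from $\mathcal{B}$ never destroys the candidate set of a cutoff-optimal barycenter, and the incumbent returned at termination attains $\mathcal{Z}_C^\ast$.

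The delicate step, and the one I expect to be the main obstacle, is the initialisation $\mathcal{Z}_{C,\alpha}^\ast \leftarrow n\alpha C$, $\xi^\ast \leftarrow \emptyset$: Lemma~\ref{lem:skipa} is phrased for a threshold of the form $z = f_C(x)$ with $x \in \mathbb{R}^2$, whereas $n\alpha C = f_{C,\alpha}(\emptyset,\mathcal{A})$ is the value of the empty solution and need not equal any $f_C(x)$. I would settle this by tracing the inequality chain in the proof of Lemma~\ref{lem:skipa}: it only uses that $C\cdot|A'|$ exceeds the incumbent, together with the fact that $f_C(\xi) \ge C\cdot|A'|$ for every candidate $\xi$ with $a_i \in \act(\xi)$. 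The point requiring care is thus to verify that pruning against the starting value $n\alpha C$ can never discard the active set of a cutoff-optimal $\xi^\ast$ — that is, to pin down precisely the regime in which the empty value is a legitimate upper bound for the comparison, so that the invariant above is preserved from the very first iteration. Once that bookkeeping invariant is phrased cleanly, correctness follows verbatim from Lemmas~\ref{lemma1} and~\ref{lem:skipa}, and combining it with the runtime bound completes the proof.
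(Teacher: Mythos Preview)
Your approach is exactly the paper's: both reduce runtime and correctness to Theorem~\ref{lem:reduction1} and observe that the only change is the initialisation. The paper's own proof is in fact just two sentences saying precisely this.

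The concern you flag at the end is real and is not an obstacle you should expect to ``phrase cleanly'' away. Taken literally, the statement asks you to show that Algorithm~\ref{algo:improvement2} solves \eqref{cutoff}, i.e.\ that the returned incumbent attains $\mathcal{Z}_C^\ast$. But the initial incumbent value $n\alpha C = f_{C,\alpha}(\emptyset,\mathcal{A})$ is only guaranteed to upper bound $\mathcal{Z}_{C,\alpha}^\ast$, not $\mathcal{Z}_C^\ast$; when $n\alpha C < \mathcal{Z}_C^\ast$ the pruning test $(n-m)C \geq n\alpha C$ can fire for every $a_i$ (take all pairwise $d^q$-distances $> 2^q C$, so $m=1$ and the test reads $(n-1)C \geq n\alpha C$, which holds for $\alpha \leq \tfrac{n-1}{n}$), and the algorithm terminates with $\xi^\ast = \emptyset$, which is not even feasible for \eqref{cutoff}. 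So the invariant you want --- that the stored value always upper bounds $\mathcal{Z}_C^\ast$ --- is simply false at initialisation in this regime.

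Read instead as a statement about \eqref{empty}, which is what Algorithm~\ref{algo:improvement2} is declared to output, everything you wrote goes through: $n\alpha C$ is a genuine feasible value for \eqref{empty}, so it is a legitimate upper bound on $\mathcal{Z}_{C,\alpha}^\ast$, and the chain in Lemma~\ref{lem:skipa} then shows that any skipped $a_i$ cannot lie in the active set of a non-empty optimum that beats the incumbent. The ``\eqref{cutoff}'' in the theorem is almost certainly a slip for ``\eqref{empty}''; your instinct that the initialisation is the one step needing a separate word is exactly right, but the resolution is to state the claim for \eqref{empty}, not to force the invariant for \eqref{cutoff}.
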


\begin{proof}
  The runtime and the correctness of the algorithm follow directly from the proof of Theorem~\ref{lem:reduction1}.
  Formally, the only difference is that Algorithm~\ref{algo:improvement2} is initialized with the empty barycenter as the current best solution.
\end{proof}

\medskip
Compared with Algorithm~\ref{algo:improvement} we replace the initial $\xi^\ast$ in the declaration from $\xi^\ast \leftarrow a_1$ with $\xi^\ast \leftarrow \emptyset$. 
This is of course only better, if $f_{C,\alpha}(\emptyset,\mathcal{A}) \leq f_C(a_1,\mathcal{A})$, which implies $\alpha \leq \frac{n-1}{n}$, compare Lemma~\ref{lem:UpperBounds}.
We will see in the following Lemma that for $\alpha$ close enough to $1$, the empty barycenter cannot be an optimal solution to \eqref{empty}:

\begin{lemma}\label{lem:LowerBoundAlpha}
  If $\alpha > \frac{n -1}{n}$ then the empty barycenter is never an optimal solution.
\end{lemma}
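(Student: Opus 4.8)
The claim is that when $\alpha > \frac{n-1}{n}$, the empty barycenter cannot be optimal for \eqref{empty}. The plan is to exhibit an explicit finite candidate solution in $\R^k$ whose cost is strictly below the empty cost $f_{C,\alpha}(\emptyset,\mathcal{A}) = n\cdot\alpha\cdot C$. The natural candidate is any single existing point $a \in \mathcal{A}$: placing the barycenter at $a$ gives, by the upper bound already established in Lemma~\ref{lem:UpperBounds}(ii), the estimate $f_C(a,\mathcal{A}) \leq (n-1)\cdot C$. Since $f_{C,\alpha}(a,\mathcal{A}) = f_C(a,\mathcal{A})$ for any genuine point $a \in \R^k$ (the empty-distance only matters when the argument is $\emptyset$), the point $a$ is a feasible solution of \eqref{empty} with cost at most $(n-1)\cdot C$.

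The whole argument then reduces to comparing this with the empty cost. I would write
\[
  f_{C,\alpha}(a,\mathcal{A}) \leq (n-1)\cdot C < n\cdot\alpha\cdot C = f_{C,\alpha}(\emptyset,\mathcal{A}),
\]
where the strict middle inequality is exactly the hypothesis $\alpha > \frac{n-1}{n}$ rearranged (multiply through by $nC>0$). This shows $a$ is strictly better than $\emptyset$, so $\emptyset \notin \mathcal{X}^*_{C,\alpha}$, which is the assertion.

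There is essentially no obstacle here; the result is an immediate consequence of the bound in Lemma~\ref{lem:UpperBounds}. The only points worth a moment's care are (i) noting that the cost of a point $a\in\mathcal{A}$ under $f_{C,\alpha}$ coincides with its cost under $f_C$, since the modified distance $d^q_{C,\alpha}$ agrees with $d^q_C$ on all pairs of genuine points, and (ii) observing that the rearrangement of $\alpha > \frac{n-1}{n}$ into $(n-1)C < n\alpha C$ requires only $C>0$, which holds by assumption. One may wish to remark that this is sharp: at $\alpha = \frac{n-1}{n}$ the inequality becomes non-strict and $\emptyset$ can tie with an existing point, which is consistent with the discussion preceding the lemma comparing the initializations of Algorithm~\ref{algo:improvement} and Algorithm~\ref{algo:improvement2}.
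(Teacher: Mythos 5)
Your proposal is correct and follows essentially the same route as the paper: both compare the empty cost $n\alpha C$ against the bound $\mathcal{Z}^\ast_C \leq (n-1)C$ from Lemma~\ref{lem:UpperBounds}(ii), which is itself witnessed by an existing point $a \in \mathcal{A}$. You merely unfold that bound explicitly, which is a harmless (and arguably clearer) presentation of the identical argument.
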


\begin{proof}
  Referring to Lemma~\ref{lem:UpperBounds} we compare $f_{C,\alpha}(\emptyset,\mathcal{A})$, with $\alpha > \frac{n-1}{n}$, to the upper bound of $\mathcal{Z}_{C}^\ast$:
  $f_{C,\alpha}(\emptyset,\mathcal{A}) = \alpha \cdot n \cdot C > \frac{n - 1}{n} \cdot n \cdot C = (n - 1) \cdot C \geq \mathcal{Z}^\ast_{C}$.
  \\
  Hence a point $\xi^\ast \in \mathbb{R}^k$ exists, such that $f_C(\xi^\ast, \mathcal{A}) < f_{C,\alpha}(\emptyset,\mathcal{A})$.
\end{proof}

To determine if the empty barycenter is a better solution than any solution in $\mathbb{R}^k$ before solving $\eqref{cutoff}$, we can look at the pairwise distances between the points of $\mathcal{A}$. 

If the points of $\mathcal{A}$ are close to each other compared to $C$, it is more likely that the cost of an empty barycenter exceeds the cost of a solution in $\mathbb{R}^k$. If on the other hand the points are far apart, it is more likely that the empty barycenter is optimal. We define the \emph{mean pairwise distance} between points of $\mathcal{A}$ and study its relation to the optimal solution of \eqref{empty} w.r.t $n, \alpha$ and $C$.

\begin{de}
  Let $\mathcal{A} = \lbrace a_1, \ldots, a_n \rbrace \subseteq \mathbb{R}^k$. 
  We define the \emph{mean pairwise distance} 
  \begin{align*}
    \mpd(\mathcal{A}) := 
    \frac{1}{n(n-1)}\sum_{i=1}^n\sum_{j=1}^n d^q_{C}(a_i,a_j) = \frac{2}{n(n-1)}\sum_{i=1}^{n-1}\sum_{j=i+1}^n d^q_{C}(a_i,a_j).
  \end{align*}
\end{de} 

The following statements are immediately clear.

\begin{lemma}
  We always have 
  \begin{itemize}
    \item $0 \leq \mpd(\mathcal{A}) \leq C$,
    \item $0 \leq \mpd(\mathcal{A}) \leq \diam(\mathcal{A})^q$.
  \end{itemize}
\end{lemma}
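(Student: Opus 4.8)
The statement to prove is the final Lemma, asserting two bounds on the mean pairwise distance:
\[
0 \leq \mpd(\mathcal{A}) \leq C, \qquad 0 \leq \mpd(\mathcal{A}) \leq \diam(\mathcal{A})^q.
\]
The plan is to argue directly from the definition of $\mpd(\mathcal{A})$ together with the definition of the cutoff distance $d^q_C$. Both bounds follow from elementary pointwise estimates on each summand $d^q_C(a_i,a_j)$, which then propagate through the averaging since $\mpd(\mathcal{A})$ is just a convex combination (an arithmetic mean) of these terms.

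First I would establish the lower bound $\mpd(\mathcal{A}) \geq 0$, which is immediate: each summand $d^q_C(a_i,a_j) = \min\{d^q(a_i,a_j),C\}$ is a minimum of two nonnegative quantities (since $d \geq 0$ and $C > 0$), hence nonnegative, and a sum of nonnegative terms divided by the positive factor $n(n-1)$ is nonnegative. Next, for the first upper bound, I would observe that by the very definition of the cutoff, $d^q_C(a_i,a_j) = \min\{d^q(a_i,a_j),C\} \leq C$ for every pair $(i,j)$. Summing this bound over all $n(n-1)$ ordered pairs with $i \neq j$ gives $\sum_{i,j} d^q_C(a_i,a_j) \leq n(n-1)\cdot C$; dividing by $n(n-1)$ yields $\mpd(\mathcal{A}) \leq C$. (One should note the diagonal terms $d^q_C(a_i,a_i)=0$ contribute nothing, consistent with the normalization by $n(n-1)$ rather than $n^2$.)

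For the second upper bound, I would again use a pointwise estimate on each summand, now comparing against the diameter. By definition $\diam(\mathcal{A}) = \max_{a_1,a_2 \in \mathcal{A}} d(a_1,a_2)$, so $d(a_i,a_j) \leq \diam(\mathcal{A})$ for all $i,j$, and since $t \mapsto t^q$ is nondecreasing for $q \geq 1$ on $[0,\infty)$, we get $d^q(a_i,a_j) \leq \diam(\mathcal{A})^q$. Combined with $d^q_C(a_i,a_j) \leq d^q(a_i,a_j)$ (the cutoff only decreases distances), each summand is at most $\diam(\mathcal{A})^q$. Summing over all ordered pairs and dividing by $n(n-1)$ gives $\mpd(\mathcal{A}) \leq \diam(\mathcal{A})^q$.

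I expect no genuine obstacle here, as both bounds reduce to termwise inequalities carried through a finite average. The only points requiring mild care are bookkeeping ones: confirming that the normalization constant $n(n-1)$ matches the number of nondiagonal ordered pairs (equivalently, twice the number of unordered pairs, which is why the two displayed sums in the definition agree), and recalling that $q \geq 1$ guarantees monotonicity of $t \mapsto t^q$ so that the diameter bound transfers from $d$ to $d^q$. Both bounds share the same lower bound $0$, which need only be shown once.
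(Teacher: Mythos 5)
Your proof is correct: the paper states this lemma without proof (calling it ``immediately clear''), and your argument---termwise bounds $0 \leq d^q_C(a_i,a_j) \leq \min\{C,\, \diam(\mathcal{A})^q\}$ carried through the average, with the diagonal terms vanishing---is exactly the routine reasoning the paper intends. Nothing is missing.
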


The mean pairwise distance can be computed in $\mathcal{O}(n^2)$ time. If it is `small' compared to the cutoff $C$ and $\alpha$ and $n$, we know that the empty barycenter can again not be an optimal solution to \eqref{empty}. Let us hence study $\mpd_C(\mathcal{A}) := \frac{1}{C}\mpd(\mathcal{A}) \in [0,1]$ as percentage of $C$. We can strengthen Lemma~\ref{lem:LowerBoundAlpha} as follows:

\begin{lemma}\label{lem:emptyNotBetter}
If $\alpha > \mpd_C(\mathcal{A}) \cdot \frac{n-1}{n}$, then for at least one point $a \in \mathcal{A}: f_{C,\alpha}(a,\cA) < f_{C,\alpha}(\emptyset,\cA)$, i.e. the empty barycenter is never an optimal solution.
\end{lemma}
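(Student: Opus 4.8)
The plan is to use a simple averaging (pigeonhole) argument: I would show that the \emph{average} of $f_{C,\alpha}(a,\cA)$ over all $a \in \mathcal{A}$ is strictly smaller than $f_{C,\alpha}(\emptyset,\cA)$, which immediately forces at least one demand point to beat the empty barycenter.

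First I would compute the total cost summed over all demand points. Since a genuine point $a \in \R^k$ never incurs the empty-set distance, we have $f_{C,\alpha}(a,\cA) = f_C(a,\cA) = \sum_{a' \in \cA} d^q_C(a,a')$. Summing over $a \in \cA$ gives
\[
  \sum_{a \in \cA} f_{C,\alpha}(a,\cA) = \sum_{i=1}^n \sum_{j=1}^n d^q_C(a_i,a_j).
\]
The diagonal terms vanish because $d^q_C(a_i,a_i)=0$, so by the definition of $\mpd$ the right-hand side equals $n(n-1)\cdot\mpd(\cA)$, and hence the average over the $n$ points is $(n-1)\cdot\mpd(\cA)$.

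Next I would rewrite the hypothesis. Using $\mpd_C(\cA) = \mpd(\cA)/C$, the assumption $\alpha > \mpd_C(\cA) \cdot \frac{n-1}{n}$ is equivalent to $\alpha\, n\, C > (n-1)\,\mpd(\cA)$. The left-hand side is exactly $f_{C,\alpha}(\emptyset,\cA) = n\alpha C$, while the right-hand side is the average just computed. We therefore obtain
\[
  f_{C,\alpha}(\emptyset,\cA) > \frac{1}{n}\sum_{a \in \cA} f_{C,\alpha}(a,\cA).
\]
Since $f_{C,\alpha}(\emptyset,\cA)$ strictly exceeds the average of the values $f_{C,\alpha}(a,\cA)$, at least one point $a \in \cA$ must satisfy $f_{C,\alpha}(a,\cA) < f_{C,\alpha}(\emptyset,\cA)$, so the empty barycenter cannot be optimal.

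There is no genuine obstacle here; the only point requiring care is the bookkeeping of the factor $n-1$. One must remember that the double sum of $d^q_C(a_i,a_j)$ has $n$ vanishing diagonal terms, so after normalization it contributes $n(n-1)$ (not $n^2$) copies of $\mpd(\cA)$. This is precisely why the factor $\frac{n-1}{n}$ in the hypothesis is the sharp threshold that makes the averaging argument go through, and it explains why Lemma~\ref{lem:emptyNotBetter} strengthens the cruder bound $\alpha > \frac{n-1}{n}$ of Lemma~\ref{lem:LowerBoundAlpha} by the factor $\mpd_C(\cA) \in [0,1]$.
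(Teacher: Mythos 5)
Your proof is correct and uses essentially the same idea as the paper: both arguments rest on the identity $\sum_{a \in \mathcal{A}} f_{C,\alpha}(a,\mathcal{A}) = n(n-1)\mpd(\mathcal{A})$ and the observation that the hypothesis makes $n\alpha C$ exceed this sum divided by $n$. The paper merely phrases it contrapositively (assuming every $a$ is at least as costly as $\emptyset$ and deriving $\mpd_C(\mathcal{A})\cdot\frac{n-1}{n} \geq \alpha$), whereas you state the same averaging computation directly as a pigeonhole argument.
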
 

\begin{proof}
Suppose such a point $a$ does not exist. We show that then $\mpd_C(\mathcal{A}) \cdot \frac{n-1}{n} > \alpha$: For any $i \in \lbrace 1, \ldots ,n \rbrace $: $\sum_{j=1}^n d_C^q(a_i,a_j) = f_{C,\alpha}(a_i,\mathcal{A}) > f_{C,\alpha}(\emptyset,\cA) = \alpha \cdot C\cdot n$. The mean pairwise distance then is
\begin{align*}
  \mpd(\mathcal{A}) &= \frac{1}{n(n-1)}\sum_{i=1}^n\sum_{j=1}^n d^q_{C}(a_i,a_j)
  \\
  &> \frac{1}{n(n-1)} \sum_{i=1}^n \alpha \cdot C \cdot n =\frac{1}{n(n-1)}\cdot \alpha \cdot C \cdot n^2  = \alpha \cdot C \cdot\frac{n}{n-1},
\end{align*}
hence $\mpd_C(\mathcal{A}) \cdot \frac{n-1}{n} > \alpha$.
\end{proof}

We can directly transfer this result to the diameter $\diam(\mathcal{A})$ which we used in Section~\ref{sec:cutoff}, since the mean pairwise distance is never larger than the diameter raised to the power $q$.

\begin{corollary}
  If $\alpha > \frac{\diam(\mathcal{A})^q}{C} \cdot \frac{n-1}{n}$ then the empty barycenter is never an optimal solution. 
\end{corollary}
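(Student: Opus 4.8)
The plan is to derive this Corollary directly from Lemma~\ref{lem:emptyNotBetter}, treating it as an immediate specialization rather than a standalone argument. The key observation is that the hypothesis of the Corollary, namely $\alpha > \frac{\diam(\mathcal{A})^q}{C} \cdot \frac{n-1}{n}$, is \emph{stronger} than the hypothesis of Lemma~\ref{lem:emptyNotBetter}, namely $\alpha > \mpd_C(\mathcal{A}) \cdot \frac{n-1}{n}$. So the entire content of the proof reduces to showing the inequality chain that connects these two thresholds.

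First I would recall the bound on the mean pairwise distance already stated in the excerpt: we always have $\mpd(\mathcal{A}) \leq \diam(\mathcal{A})^q$. Dividing through by $C > 0$ gives $\mpd_C(\mathcal{A}) = \frac{1}{C}\mpd(\mathcal{A}) \leq \frac{\diam(\mathcal{A})^q}{C}$. Multiplying both sides by the positive factor $\frac{n-1}{n}$ preserves the inequality, yielding
\[
  \mpd_C(\mathcal{A}) \cdot \frac{n-1}{n} \;\leq\; \frac{\diam(\mathcal{A})^q}{C} \cdot \frac{n-1}{n}.
\]
Now the assumption $\alpha > \frac{\diam(\mathcal{A})^q}{C} \cdot \frac{n-1}{n}$ combined with this inequality gives $\alpha > \mpd_C(\mathcal{A}) \cdot \frac{n-1}{n}$, which is exactly the hypothesis of Lemma~\ref{lem:emptyNotBetter}. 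Applying that Lemma then immediately yields that the empty barycenter is never an optimal solution, completing the argument.

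There is essentially no main obstacle here, since this is a routine transitivity-of-inequalities deduction; the only point requiring a moment's care is confirming that $\mpd(\mathcal{A}) \leq \diam(\mathcal{A})^q$ is legitimately available (it is, as one of the two bullet points in the unnumbered Lemma preceding the definition of $\mpd_C$), and that dividing and multiplying by positive quantities is valid, which holds because $C > 0$ by the standing assumptions on the cutoff and $\frac{n-1}{n} > 0$ for $n \geq 2$. I would keep the write-up to two or three lines, invoking the bound on $\mpd$ and then Lemma~\ref{lem:emptyNotBetter} directly.
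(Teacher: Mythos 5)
Your proposal is correct and follows exactly the same route as the paper: both deduce $\alpha > \mpd_C(\mathcal{A}) \cdot \frac{n-1}{n}$ from the hypothesis via the bound $\mpd(\mathcal{A}) \leq \diam(\mathcal{A})^q$ and then invoke Lemma~\ref{lem:emptyNotBetter}. Your version merely spells out the intermediate inequality a little more explicitly than the paper does.
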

\begin{proof}
  From $\alpha > \frac{\diam(\mathcal{A})^q}{C} \cdot \frac{n-1}{n}$ it follows that $\alpha > \mpd_C \cdot \frac{n-1}{n}$. With Lemma~\ref{lem:emptyNotBetter} we know that then the emtpy barycenter is not an optimal solution to \eqref{empty}.
\end{proof}

We can reformulate Lemma~\ref{lem:emptyNotBetter} and Corollary~$25$ to get a condition for the diameter and the mean pairwise distance for the empty barycenter not being optimal:
$\mpd(\mathcal{A}) < \alpha \cdot C \cdot \frac{n}{n-1}$ or $\diam(\mathcal{A})^q < \alpha\cdot C \cdot \frac{n}{n-1}$ then $\emptyset$ is not optimal.

On the other hand we show that for small $\alpha$ the empty barycenter is always an optimal solution to \eqref{empty}:
\begin{lemma}\label{lem:smallAlpha}
  If $\alpha \leq \min \left\{ \frac{1}{2^q} \frac{\diam(\mathcal{A})^q}{n\cdot C}, \frac{1}{n} \right\}$ then the empty barycenter is an optimal solution to \eqref{empty}.
\end{lemma}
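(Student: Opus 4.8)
The plan is to show that the empty barycenter, whose cost is $f_{C,\alpha}(\emptyset,\mathcal{A}) = n \cdot \alpha \cdot C$ by definition, is no worse than any point $x \in \R^k$. Since for $x \in \R^k$ the empty set never enters the distance, we have $f_{C,\alpha}(x,\mathcal{A}) = f_C(x,\mathcal{A})$, so it suffices to prove that $f_C(x,\mathcal{A}) \geq n\alpha C$ for every $x \in \R^k$.

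First I would establish a universal lower bound on $f_C(x,\mathcal{A})$ coming from the two most distant points. Since $\mathcal{A}$ is finite, choose $a,b \in \mathcal{A}$ with $d(a,b) = \diam(\mathcal{A})$. For arbitrary $x \in \R^k$ the triangle inequality gives $d(x,a) + d(x,b) \geq d(a,b) = \diam(\mathcal{A})$, so at least one of the two distances is $\geq \diam(\mathcal{A})/2$; raising to the power $q \geq 1$ preserves this, whence $\max\{d^q(x,a), d^q(x,b)\} \geq (\diam(\mathcal{A})/2)^q$. The contribution of that far point to $f_C(x,\mathcal{A})$ is $\min\{d^q(x,\cdot), C\} \geq \min\{(\diam(\mathcal{A})/2)^q, C\}$, and since all remaining summands are nonnegative I obtain the key bound
\[ f_C(x,\mathcal{A}) \geq \min\left\{ \left(\tfrac{\diam(\mathcal{A})}{2}\right)^q, C \right\} \qquad \text{for all } x \in \R^k. \]

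It then remains to check that the hypothesis forces $n\alpha C$ below this minimum, and here the two terms of the assumed bound on $\alpha$ match the two terms of the minimum exactly: $\alpha \leq \frac{1}{2^q}\frac{\diam(\mathcal{A})^q}{n\cdot C}$ rearranges to $n\alpha C \leq (\diam(\mathcal{A})/2)^q$, while $\alpha \leq \frac{1}{n}$ rearranges to $n\alpha C \leq C$. Taking both together yields $n\alpha C \leq \min\{(\diam(\mathcal{A})/2)^q, C\} \leq f_C(x,\mathcal{A})$, so that $f_{C,\alpha}(\emptyset,\mathcal{A}) \leq f_{C,\alpha}(x,\mathcal{A})$ for all $x \in \R^k$, and the empty barycenter is optimal.

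The argument is short and I do not expect a genuine obstacle; the only point that needs care is the lower bound step, where one must see that a single faraway demand point already contributes $\min\{(\diam(\mathcal{A})/2)^q, C\}$ no matter where $x$ lies. In particular, when $(\diam(\mathcal{A})/2)^q$ exceeds $C$ the cutoff caps that contribution at $C$, which is precisely why the second hypothesis $\alpha \leq 1/n$ is needed \emph{in addition} to the first. Matching the two halves of the $\min$ in the hypothesis to the two regimes of the cutoff is essentially the whole content of the proof.
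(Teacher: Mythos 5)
Your argument is correct and is essentially the paper's own proof: both lower-bound the cutoff objective by $\min\{(\diam(\mathcal{A})/2)^q,\,C\}$ via the triangle inequality applied to the two points realizing the diameter, and then observe that the hypothesis on $\alpha$ is exactly the rearrangement of $n\alpha C \leq \min\{(\diam(\mathcal{A})/2)^q,\,C\}$. The only cosmetic difference is that you state the lower bound for every $x \in \mathbb{R}^k$ rather than just for an optimal $\xi^\ast$ of \eqref{cutoff}, which if anything makes the argument slightly cleaner.
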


\begin{proof}
  Let $\xi^\ast \in \mathcal{X}^\ast_C$ be an optimal solution to \eqref{cutoff}. 
  We know from the triangle inequality that there exists a point $a \in \mathcal{A}$ such that $d(\xi^\ast,a) \geq \frac{1}{2} \diam(\mathcal{A})$. 
  Therefore $d_C^q(\xi^\ast,a) \geq \min \lbrace \frac{1}{2^q} \diam(\mathcal{A})^q, C \rbrace$ and hence $\mathcal{Z}_C^\ast \geq \min \lbrace \frac{1}{2^q} \diam(\mathcal{A})^q, C \rbrace$.
  Then 
  \begin{align*}
    f_{C,\alpha}(\emptyset,\mathcal{A}) &= n\cdot \alpha \cdot C \leq \min \left\{\frac{1}{2^q} \diam(\mathcal{A})^q, C \right\}
    \\
    \Leftrightarrow \alpha &\leq \min \left\{ \frac{1}{2^q} \frac{\diam(\mathcal{A})^q}{n\cdot C}, \frac{1}{n} \right\}
  \end{align*}  
\end{proof}

\begin{remark}
  When $\diam(\mathcal{A}) \geq 2C$ then $\min \left\{ \frac{1}{2^q} \frac{\diam(\mathcal{A})^q}{n\cdot C}, \frac{1}{n} \right\} = \frac{1}{n}$. And thus for $\alpha \leq \frac{1}{n}$ and $\diam(\mathcal{A}) \geq 2C$ the empty barycenter is an optimal solution to \eqref{empty}.
\end{remark}

The following lemma and example show that for larger $\mpd(\mathcal{A})$ both cases, $\emptyset \in \mathcal{X}^\ast_{C,\alpha}$ or $\emptyset \not\in \mathcal{X}^\ast_{C,\alpha}$, are possible.

\begin{lemma}\label{lem:ChalfplusEps}
Let $\alpha=\frac{1}{2}, q=1$. Then for any $\varepsilon > 0$ there exists a set $\mathcal{A} \subseteq \mathbb{R}^k$ such that $\mpd(\mathcal{A}) = \frac{1}{2} \cdot C + \varepsilon$, but where $f_{C,\frac{1}{2}}(\emptyset,\mathcal{A}) < f_{C,\frac{1}{2}}(x,\mathcal{A})$ for any $x \in \mathbb{R}^k$.
\end{lemma}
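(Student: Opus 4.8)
The plan is to build $\mathcal{A}$ as two far-apart clusters, each carrying exactly half of the points, and to use a continuous size parameter to tune $\mpd(\mathcal{A})$ to the prescribed value $\frac12 C+\varepsilon$. Since $\mpd(\mathcal{A})\le C$ always, only $\varepsilon\in(0,\tfrac C2)$ can occur, and this is exactly the regime in which the lemma is of interest: it shows the threshold of Lemma~\ref{lem:emptyNotBetter} is tight just above $C/2$. So I fix an even number $n=2m$ and place $m$ points around a center $P$ and $m$ points around a center $Q$, with $\lVert P-Q\rVert$ large (say $\ge 10C$) and each cluster realized as a scaled regular simplex (possible since the dimension $k$ is free) of small scale $t\ge 0$, so that the $m$ points of a cluster are pairwise distinct once $t>0$.

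First I would record the two quantities to be compared. The empty barycenter has value $f_{C,\frac12}(\emptyset,\mathcal{A})=n\alpha C=mC$. For a point $x\in\R^k$, since the clusters are far apart, no $x$ can be within cutoff distance $\sqrt[q]{C}=C$ of points from both clusters (two such points would lie within $2C$ of each other by the triangle inequality, contradicting $\lVert P-Q\rVert\ge 10C$ once $t$ is small). Hence $\act(x)$ lies inside a single cluster, and the split \eqref{eq-split} gives $f_C(x)=f(x,\act(x))+C\,\lvert\con(x)\rvert\ge C\,(n-\lvert\act(x)\rvert)\ge C(n-m)=mC$. I would then argue strictness: if $\lvert\act(x)\rvert<m$ the constant part alone exceeds $mC$; if $\lvert\act(x)\rvert=m$ (a whole cluster is active) then, the $m$ cluster points being distinct for $t>0$, the active part $f(x,\act(x))$ is strictly positive. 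Either way $f_{C,\frac12}(x,\mathcal{A})=f_C(x)>mC=f_{C,\frac12}(\emptyset,\mathcal{A})$ for every $x$, so $\emptyset$ is strictly optimal for \emph{any} $t>0$, independently of the exact scale.

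It remains to hit $\mpd(\mathcal{A})=\frac12 C+\varepsilon$. Every inter-cluster pair has distance $\ge\lVert P-Q\rVert-2tR>C$, so its cutoff distance is exactly $C$; there are $m^2$ such pairs. Writing $\Sigma(t)$ for the total cutoff distance over the $m(m-1)$ intra-cluster pairs, the definition of $\mpd$ gives $\mpd(\mathcal{A})=\frac{2}{n(n-1)}\bigl(m^2 C+\Sigma(t)\bigr)$. At $t=0$ the clusters are coincident, $\Sigma(0)=0$, and $\mpd=\frac{mC}{2m-1}=\frac C2+\frac{C}{2(2m-1)}$; choosing $m$ with $2m-1\ge\frac{C}{2\varepsilon}$ makes this baseline value $\le\frac C2+\varepsilon$. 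As $t$ grows (while all intra distances stay $<C$) the simplex edges increase together, so $\Sigma(t)$ is continuous and sweeps through $[0,m(m-1)C)$. Solving $\frac{2}{n(n-1)}(m^2C+\Sigma)=\frac C2+\varepsilon$ yields the target $\Sigma=m(2m-1)\varepsilon-\frac{mC}{2}$, which is nonnegative by the choice of $m$ and strictly below $m(m-1)C$ precisely when $\varepsilon<\frac C2$. By the intermediate value theorem there is a scale $t^\ast>0$ realizing this $\Sigma$, and the corresponding $\mathcal{A}$ has the required mean pairwise distance while, by the previous paragraph, admitting $\emptyset$ as its strict optimum.

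The main obstacle is the tension between the two requirements: driving $\mpd$ down toward $C/2$ forces the points to cluster densely, which is exactly what makes a single location competitive with $\emptyset$. The resolution — and the one delicate point to get right — is to decouple the two demands: taking $n$ large shrinks the unavoidable overshoot $\frac{C}{2(2m-1)}$ of the coincident configuration below $\varepsilon$, after which a purely local, tie-breaking perturbation (the positive scale $t^\ast$) simultaneously restores strict optimality of $\emptyset$ and fine-tunes $\mpd$ to the exact value, without ever letting a cluster grow to more than half the points.
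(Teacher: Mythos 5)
Your construction is correct in substance and is structurally the same as the paper's: two equal-size clusters of $m$ (the paper: $n$) points each, separated so that every inter-cluster pair is cut off at $C$, with a small intra-cluster spread tuned so that $\mpd(\mathcal{A})=\frac{C}{2}+\varepsilon$; your parameter count ($2m-1\ge C/(2\varepsilon)$, target $\Sigma=m(2m-1)\varepsilon-\frac{mC}{2}$) is exactly the paper's computation of $\delta=\frac{2n(2n-1)\varepsilon-nC}{4(n-1)}$ in disguise. The genuine differences are in execution, and they mostly favour your version. First, you place the clusters at distance $\ge 10C$ rather than exactly $C$, which forces $\act(x)$ into a single cluster and yields the clean lower bound $f_C(x)\ge mC+f(x,\act(x))>mC$ via \eqref{eq-split}; the paper's clusters are only $C$ apart, so a candidate $x$ between them can be active for points of both, and the paper's claim that the cluster points are optimal for \eqref{cutoff} is asserted rather than proved. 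Second, your argument works in any dimension and explicitly records that the lemma can only be nonvacuous for $\varepsilon<C/2$ (since $\mpd\le C$), a restriction the paper leaves implicit (its formula for $\delta$ silently requires $\delta\le C$, and even its condition $n>\frac{C}{4\varepsilon}+\frac12$ fails for $n$ too large). Two small repairs to your write-up: you should require $2m-1>\frac{C}{2\varepsilon}$ \emph{strictly}, since equality gives $\Sigma=0$, hence $t^\ast=0$, coincident cluster points, and then $f_{C,\frac12}(x,\mathcal{A})=mC=f_{C,\frac12}(\emptyset,\mathcal{A})$ at a cluster centre, destroying the strict inequality (the paper's $n>\frac{C}{4\varepsilon}+\frac12$ is strict for the same reason); and the intermediate value theorem is overkill, as $\Sigma(t)=m(m-1)t$ is linear on $[0,C)$, so $t^\ast$ can be written down explicitly.
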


\begin{proof}
We construct a set $\mathcal{A} = \lbrace a_1, \ldots, a_{2n} \rbrace \subseteq \mathbb{R}$, where $n > \frac{C}{4\varepsilon} + \frac{1}{2}$ and set $\delta := \frac{2n(2n-1)\varepsilon - nC}{4(n-1)}$. 
The points have the coordinates $a_1=-\delta, a_2=\ldots =a_n=0, a_{n+1}= \ldots = a_{2n-1} = C, a_{2n}=C+\delta$.
Then $\mpd(\mathcal{A}) = 
\frac{n^2C + 2(n-1)\delta}{n(2n-1)} = \frac{nC}{2n-1} + \frac{2(n-1)\delta}{n(2n-1)} = \frac{C}{2} + \frac{C}{2(2n-1)} + \frac{2(n-1)\delta}{n(2n-1)}$.With the specified $\delta$ we have $\mpd(\mathcal{A}) = \frac{C}{2} + \varepsilon$ and since $n  > \frac{C}{4\varepsilon} + \frac{1}{2}$ we have $\delta > 0$.

An optimal solution of \eqref{cutoff} is any of the points $a_2, \ldots a_{2n-1}$. The optimal objective function value $\mathcal{Z}^\ast_C = f_{C,\frac{1}{2}}(a_2,\mathcal{A}) = nC + \delta$ is larger than $nC = f_{C,\frac{1}{2}}(\emptyset,\mathcal{A})$. So the empty barycenter is a better solution than the best solution in $\mathbb{R}$.

\end{proof}
We have seen in Lemma~\ref{lem:emptyNotBetter} that for $\mpd(\mathcal{A}) < \alpha \cdot C \cdot \frac{n}{n-1}$ the empty barycenter is not optimal. 
Yet, Lemma~\ref{lem:ChalfplusEps} proves that the $\mpd$ can be arbitrarily close to $\alpha \cdot C$ and still the empty barycenter \emph{is} an optimal solution.
The following example proves on the other hand that there exist sets with an $\mpd$ arbitrarily close to $C$, for which the empty barycenter is \emph{not} an optimal solution.

\begin{example}\label{ex:CminusEps}[Example for large $\mpd(\mathcal{A})$ where $\emptyset \not\in \mathcal{X}^\ast_{C,\alpha}$ for $\alpha \geq \frac{1}{2}$.] 

Let two points in $\mathbb{R}$ be given, $a_1=0, a_2=\sqrt[q]{C-\varepsilon}$ for some $\varepsilon > 0$. Now $f_{C,\alpha}(a_1,\mathcal{A}) = C - \varepsilon < C \leq 2\cdot \alpha \cdot C = f_{C,\alpha}(\emptyset,\mathcal{A})$.
\end{example}
Note that the situation of this example is the same if we replace the $\mpd$ by the \emph{minimum} or \emph{median} distance between points, since both values are $C-\varepsilon$.

We finally summarize our findings.
We know that the empty barycenter \emph{is} an optimal solution to \eqref{empty} if
\begin{itemize}
  \item there is no ball $B$ with radius $\sqrt[q]{C}$ that contains at least $(1-\alpha)\cdot n$ points, see Lemma~\ref{lem:largeActiveXi},
  \item $\alpha \leq \min \left\{ \frac{1}{2^q} \frac{\diam(\mathcal{A})^q}{n\cdot C}, \frac{1}{n} \right\} $, see Lemma~\ref{lem:smallAlpha}.
\end{itemize}

On the other hand we know that the empty barycenter \emph{is not} an optimal solution if
\begin{itemize}
  \item $\alpha > \frac{n-1}{n}$, see Lemma~\ref{lem:LowerBoundAlpha},
  \item $\alpha > \mpd_C(\mathcal{A}) \cdot \frac{n-1}{n}$ or $\alpha > \frac{\diam(\mathcal{A})^q}{C} \cdot \frac{n-1}{n}$, see Lemma~\ref{lem:emptyNotBetter} and the subsequent Corollary.
\end{itemize}
In the other cases 
we do not know in advance if the empty barycenter is optimal, c.f. Lemma~\ref{lem:ChalfplusEps} and Example~\ref{ex:CminusEps}.

\section{Sensitivity analysis w.r.t $C$} \label{sec:variableC}

So far we have assumed that the cutoff value $C$ is a priori specified, but there is a wide range of scenarios where this is not the case.

In the application described in Section~\ref{sec:applications} we often (but not always) know the order of magnitude of a reasonable cutoff $C$ due to the physical reality of the data, but this typically still leaves a large interval of possible choices which may lead to very different outcomes.

In a more direct location problem setting the actual $C$ may be determined by another player trying to maximize her profit based on knowledge of the entire function $g = [C \mapsto  \min_{\xi} f_C(\xi,\mathcal{A})]$ (or $[C \mapsto  \min_{\xi} f_{C,\alpha}(\xi,\mathcal{A})]$, where we still assume $\alpha>0$ to be fixed). 
Taking up the waste dump example from the introduction, it may be that in the decision process the local transportation company is asked for the price $C$, at which it would offer to transport domestic waste to the dump. A profit maximizing choice of $C$ depends on detailed knowledge of the function $g$.

This function is what we study in the present section.

\begin{de}
  Let $x \in \mathbb{R}^k$ be a fixed point. Define the two functions
  \begin{itemize}
    \item [(i)] $g_{x}: \mathbb{R}_{+} \rightarrow \mathbb{R}, C \mapsto  f_C(x,\mathcal{A})$
    \item [(ii)] $g: \mathbb{R}_{+} \rightarrow \mathbb{R}, C \mapsto  f_C(\mathcal{A}) := \underset{\xi \in \mathbb{R}^k}{\min}\ f_C(\xi,\mathcal{A})$.
  \end{itemize}
  The function $g_{x}$ maps the cutoff $C$ to the objective function value $f_C(x,\mathcal{A})$, while the function $g$ maps the cutoff to the \emph{optimal} objective function value $\mathcal{Z}_C^\ast$, compare with \eqref{cutoff}.
\end{de}

We study how the barycenter $\xi^\ast = \xi^{\ast}_{C}$ and the values of $g_x$ and $g$ change with changing $C$.

\begin{example}\label{ex:discontinuity}[Example for discontinuity of the optimal solution to \eqref{cutoff} w.r.t $C$]
\begin{center}
  \begin{tikzpicture}[scale = 1]
    \draw [draw=black] (-0.6,-0.6) rectangle ++(8.8,1.2);
    \tkzInit[xmax=8,ymax=0.5,xmin=-0.5,ymin=-0.5]
    \draw [->, color=gray, dotted] (-0.4,0) -- (8,0);
    \draw [-, color=gray, dotted] (0,-0.2) -- (0,0.2);
    \node  (1) at (0, 0) {$a_1$};
    \node  (2) at (0.5, 0) {$a_2$};
    \node  (3) at (5, 0) {$a_3$};
    \node  (4) at (6, 0) {$a_4$};
    \node  (5) at (7, 0) {$a_5$};
  \end{tikzpicture}
\end{center}
  
  Let $5$ points in $\mathbb{R}$ be given, $a_1=0,a_2=0.5,a_3=5,a_4=6,a_5=7$. Consider $q=1$ and $d = |\cdot |$. The location of the points is sketched above. 
  The following gives a complete description of $\mathcal{X}^\ast_C$ for various $C$. Note that at the boundaries of the ranges the union of the barycenters in the lower and the higher range are in $\mathcal{X}^\ast_C$.
  For $0 \leq C \leq 0.5$ any of the points $a_1$ to $a_5$ is optimal for \eqref{cutoff}.
  For $0.5 \leq C \leq 1.5$ any point on the interval $[a_1,a_2]$ is optimal, 
  for $1.5 \leq C \leq 5.25$ $a_4$ is optimal, 
  for $5.25 \leq C$ $a_3$, the barycenter of the problem without cutoff, is optimal. 
\end{example}

As seen in the example, the optimal solution set for \eqref{cutoff} can change abruptly in $C$. 
On the other hand we will see that $g$, i.e. the objective function $C \mapsto f_C(\mathcal{A})$ is quite well-behaved and can be computed efficiently.

\begin{lemma}\label{lem:locallyConcaveAndCont}
Let $x \in \mathbb{R}^k$ be some fixed point, $\mathcal{A} = \lbrace a_1, \ldots, a_n \rbrace \subseteq \mathbb{R}^k$. 
Sort the points in\-crea\-sing\-ly by their distance to $x$ and define $d_i := d^q(x,a_i)$, so that $d_1 \leq d_2 \leq \ldots \leq d_n$. 
Then the function $g_x$ is 
  \begin{itemize}
    \item[(i)] of the form \begin{equation}
      \label{eq:gxofC}
      g_x(C) = \sum_{i = 1}^j d_i + (n-j)\cdot C, \ \text{ with } j=|\act_C(x)| 
    \end{equation}
    $g_{x}$ is therefore piecewise linear with kinks in $d_i$.
    \item[(ii)] continuous,
    \item[(iii)] non-decreasing, 
    \item[(iv)] concave.
  \end{itemize}
\end{lemma}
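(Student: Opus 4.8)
The plan is to establish (i) by direct computation from the definition of $g_x$, and then to derive (ii)--(iv) as essentially immediate consequences, using either the explicit piecewise-linear form or the observation that each summand $\min\{d_i,C\}$ is itself a well-behaved function of $C$.

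First I would prove (i). Since the points are sorted so that $d_1 \le d_2 \le \ldots \le d_n$, for a given $C>0$ the active set $\act_C(x)$ consists exactly of those $a_i$ with $d_i \le C$; writing $j = |\act_C(x)|$ we then have $d_j \le C < d_{j+1}$ (with the obvious conventions at the two ends). Plugging into the definition and splitting the sum accordingly,
\[
  g_x(C) = \sum_{i=1}^n \min\{d_i, C\} = \sum_{i=1}^j d_i + \sum_{i=j+1}^n C = \sum_{i=1}^j d_i + (n-j)\,C,
\]
which is the claimed formula \eqref{eq:gxofC}. On each interval where $j$ is constant, i.e.\ on $[d_j, d_{j+1}]$, this is an affine function of $C$ with slope $n-j$, so $g_x$ is piecewise linear; the slope can only change at the values $C=d_i$, which are therefore the only possible kinks.

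For (ii), continuity follows because each summand $C \mapsto \min\{d_i, C\}$ is continuous (a minimum of two continuous functions of $C$), and a finite sum of continuous functions is continuous; equivalently, one checks directly that the affine pieces in (i) agree at each breakpoint $d_i$. For (iii), each slope $n-j$ is nonnegative since $0 \le j \le n$, so $g_x$ is non-decreasing; alternatively, each $\min\{d_i,C\}$ is plainly non-decreasing in $C$. For (iv) I would argue via the slopes: as $C$ increases, the number of active points $j=|\act_C(x)|$ is non-decreasing, hence the slope $n-j$ is non-increasing, and a continuous piecewise-linear function with non-increasing slope is concave. Alternatively, each $\min\{d_i,C\}$ is the pointwise minimum of the two affine (hence concave) functions $C\mapsto d_i$ and $C\mapsto C$ and is therefore concave, and a sum of concave functions is concave.

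There is no serious obstacle here; the only point requiring a little care is the bookkeeping when several of the $d_i$ coincide, so that $j$ jumps by more than one at a single breakpoint. This does not affect any of the conclusions: the formula in (i) still holds with $j=|\act_C(x)|$, the function stays continuous (the one-sided slopes still meet), and the slope still only decreases. Ties are thus handled by the same argument, merely noting that some of the nominal linear pieces have zero length.
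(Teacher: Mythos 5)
Your proof is correct and follows essentially the same route as the paper: the explicit split of the sum into active and constant points for (i), checking that adjacent affine pieces meet at each $d_i$ for (ii), monotonicity of each summand for (iii), and the non-increasing slope $n-j$ for (iv). Your alternative summand-wise argument (each $C \mapsto \min\{d_i,C\}$ is continuous, non-decreasing, and concave as a minimum of two affine functions, and these properties are preserved under finite sums) is a slightly cleaner packaging of (ii)--(iv) than the paper's, and your remark on ties is a point the paper glosses over, but the substance is the same.
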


\begin{proof} (i) 
W.l.o.g we assume that $x \neq a_i$ for all $i \in \lbrace 1,2,\ldots,n\rbrace$. Otherwise we eliminate the first $j$ points from our list, where $j:= \max \lbrace i \in \lbrace 1, \ldots, n \rbrace \ \vert \ d_i =0 \rbrace$, use $n^\prime = n-j$ in the proof and re-enumerate $a_{j+1},\ldots,a_n$ to $a_1,\ldots,a_{n^\prime}$.
\\[0.5em]
By definition
\begin{align*}
g_x(C) = f_C(x,\mathcal{A}) =  \sum_{i=1}^n \text{min}\lbrace d_i,C \rbrace.
\end{align*}
We can split this sum into sums over indices of active and constant points, as defined in Section~\ref{sec:LocalStructure}. 
Let $m = m_C = |\act_C(x)|$. Then $d_1 \leq \ldots \leq d_{m_C} \leq C$ and $C < d_{m_C+1} \leq \ldots \leq d_n$ and 
\begin{align*}
  g_x(C) = \sum_{i=1}^n \text{min}\lbrace d_i,C \rbrace = \sum_{i=1}^{m_C} d_i + \sum_{i=m_C+1}^n C = \sum_{i=1}^{m_C} d_i + (n - m_C)\cdot C .
\end{align*}
For any $j \in \lbrace 1, \ldots , n \rbrace$ the sum over the $d_i$ is constant for $d_{j} \leq C < d_{j+1}$ so $g_x$ is piecewise linear. 
The slope of the $j$-th line segments is $(n-j)$.
\\[0.5em]
(ii) We know that $g_x$ is piecewise linear with kinks in $d_i$. On these line segments, i.e. $d_i < C < d_{i+1}$, the function is  continuous. We have to check for the kinks of $g_x$, i.e. $C=d_j$ for some $j$, if the two line segments for $C = d_j - \varepsilon$ and $C = d_j + \varepsilon$, $\varepsilon > 0$, intersect at $g_x(d_j)$.
\medskip

Take $j \in \lbrace 1, \ldots , n \rbrace$. For $d_{j-1} < C < d_j$: $g_x(C) = \sum_{i=1}^{j-1} d_i + \sum_{i=j}^n C $. For $j=1$ the first sum is $0$.
For $d_{j} < C < d_{j+1}$: $g_x(C) = \sum_{i=1}^{j} d_i + \sum_{i=j+1}^n C $. For $j=n$ the second sum is $0$.

For $C=d_j$: $\sum_{i=1}^{j-1} d_i + \sum_{i=j}^n C = \sum_{i=1}^{j-1} d_i + d_j + \sum_{i=j+1}^n C = \sum_{i=1}^{j} d_i + \sum_{i=j+1}^n C$. So the two line segments intersect and therefore is $g_x$ continuous. 
\\[0.5em]
(iii) Take two cutoffs $C_1 < C_2$. Since $C_1 < C_2$ we also have $\text{min}\lbrace d_i,C_1 \rbrace \leq \text{min}\lbrace d_i,C_2 \rbrace$ for all $i \in \lbrace 1,\ldots ,n \rbrace$. Now $g_x(C_1) = \sum_{i=1}^n \text{min}\lbrace d_i,C_1 \rbrace \leq \sum_{i=1}^n \text{min}\lbrace d_i,C_2 \rbrace = g_x(C_2)$, so $g_x$ is non-decreasing.
\\[0.5em]
(iv) With larger $C$ the cardinality $m_C$ of $\act_C(x)$ increases. 
Therefore the slope $(n-m_C)$ of the line segments decreases with growing $C$, so $g_x$ is also concave.
\end{proof}

We can now extend the results for $g_x$ which hold for all $x \in \mathbb{R}^k$ to the function $g$.

\begin{theo} \label{thm:gContNondecConc}
  The function $g$ is continuous, non-decreasing and concave.
\end{theo}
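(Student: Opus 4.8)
The plan is to exploit the fact that $g$ is the \emph{lower envelope} of the family $\{g_x\}_{x \in \mathbb{R}^k}$: since $f_C(x,\mathcal{A}) = g_x(C)$ for every fixed $x$, the definition of $g$ reads
\[ g(C) = \min_{x \in \mathbb{R}^k} f_C(x,\mathcal{A}) = \inf_{x \in \mathbb{R}^k} g_x(C). \]
By Lemma~\ref{lem:locallyConcaveAndCont} each $g_x$ is non-decreasing and concave, and the strategy is to show that these two properties pass to a pointwise infimum, while continuity is then recovered from concavity. I would emphasise up front that continuity cannot be obtained by appealing to the continuity of the individual $g_x$, because an infimum of continuous functions is in general only upper semicontinuous; routing continuity through concavity is the crucial step.

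First I would prove monotonicity. For $C_1 \le C_2$, part~(iii) of Lemma~\ref{lem:locallyConcaveAndCont} gives $g_x(C_1) \le g_x(C_2)$ for \emph{every} $x$, and taking the infimum over $x$ on both sides preserves the inequality, so $g(C_1) \le g(C_2)$. Next, for concavity, fix $C_1, C_2 \in \mathbb{R}_+$ and $t \in [0,1]$. By part~(iv) of Lemma~\ref{lem:locallyConcaveAndCont}, for every $x$,
\[ g_x\bigl(t C_1 + (1-t) C_2\bigr) \ge t\, g_x(C_1) + (1-t)\, g_x(C_2) \ge t\, g(C_1) + (1-t)\, g(C_2), \]
where the second inequality uses $g_x(C_i) \ge g(C_i)$. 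Taking the infimum over $x$ on the left-hand side yields $g\bigl(t C_1 + (1-t) C_2\bigr) \ge t\, g(C_1) + (1-t)\, g(C_2)$, i.e.\ $g$ is concave.

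Finally, continuity. Here I would first record that $g$ is real-valued: $g(C) \ge 0$ since $f_C \ge 0$, and $g(C) \le g_{a_1}(C) \le (n-1)\cdot C < \infty$ (the bound of Lemma~\ref{lem:UpperBounds}(ii), with the term for $a=a_1$ vanishing). A finite concave function on an open interval is continuous on the interior of its domain, and since $\mathbb{R}_+$ is open this is all of $\mathbb{R}_+$; hence $g$ is continuous. The only points requiring care are this finiteness of $g$ and the (standard) fact that finite concavity forces continuity on an open domain; the remainder is the routine observation that a lower envelope inherits monotonicity and concavity from its members, which is the main conceptual content of the statement.
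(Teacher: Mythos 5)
Your proof is correct, but it takes a genuinely different route from the paper's. The paper first invokes Lemma~\ref{lemma1} to argue that every optimal solution of \eqref{cutoff} is a barycenter of some subset $A \subseteq \mathcal{A}$, so that $g$ is in fact the minimum of \emph{finitely} many functions $g_\xi$ with $\xi$ ranging over a finite candidate set $S$; continuity, monotonicity and concavity then all follow at once because each property is preserved under a minimum of finitely many functions. You instead work with the infimum over \emph{all} of $\mathbb{R}^k$, observe that monotonicity and concavity pass to arbitrary pointwise infima, and then recover continuity from concavity together with the finiteness bounds $0 \le g(C) \le (n-1)C$. Your route has two advantages: it does not rely on the finite-candidate reduction at all (so it bypasses the mild subtlety that one must fix a representative of $\cX^*(A)$ for each subset $A$, independent of $C$, for the paper's ``finite set $S$'' to make sense), and it correctly identifies that continuity genuinely fails to pass to infinite infima, a point the paper's argument avoids only because of the finiteness reduction. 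What the paper's approach buys in exchange is the explicit piecewise-linear structure of $g$ (as a finite lower envelope of the piecewise-linear $g_\xi$), which is used in the discussion following the theorem and in Algorithm~\ref{algo:calAllC}; your proof establishes the stated properties but not that additional structure. One minor point to make explicit: continuity of a finite concave function is automatic only on the \emph{interior} of its domain, so if $\mathbb{R}_+$ is read as $[0,\infty)$ you should add the one-line check that $0 \le g(C) \le (n-1)C$ forces $g(C) \to 0 = g(0)$ as $C \downarrow 0$.
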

  
\begin{proof}
  For calculating the function $g$ we need to solve \eqref{cutoff} for every $C$. For a fixed $C$ an optimal solution to $\eqref{cutoff}$ is a solution of \eqref{classic} for some subset $A \subseteq \mathcal{A}$, compare Lemma~\ref{lemma1}.
  
  Since $\mathcal{A}$ is finite, there are only finitely many subsets of $\mathcal{A}$. 
  There is therefore only a finite set $S$ of candidates for an optimal solution to \eqref{cutoff}. 
  The function $g$ is the minimum of the functions $g_{\xi}$, i.e. $g(C) = \min_{\xi \in S} g_{\xi}(C)$. 
  
  The minimum of finitely many continuous functions is continuous. The same holds for the properties ``non-decreasing'' and ``concave''.
\end{proof}

Recall the piecewise linear form of the function $g_x$. 
The function $g$ as minimium of finitely many piecewise linear functions is then itself piecewise linear. 
The slopes of the line segments are given by the cardinalities of the sets $\act_{C}(\xi^\ast)$ for optimal solutions $\xi^\ast = \xi^\ast_C \in \mathcal{X}^\ast_C$ for the different $C$. Those slopes are integers between $n-1$ and $0$. Since $g$ is continuous and concave the function  
consists of at most $n$ linear pieces. The slope only changes at the kinks of the function $g$. 
And only there does the cardinality of the set $\act(\xi^\ast)$ change. 
Let us say we have kinks at $C_1 < C_2$ and no kinks in between. 
Let $C \in (C_1, C_2)$. Any solution $\xi^\ast_{C} \in \mathcal{X}^\ast_{C}$ defines the same function $g_{\xi^\ast_{C}}$ on the interval $(C_1,C_2)$. 
The function $g$ has its next kink at $C_2$, 
so $C_2$ is the smallest value greater than $C_1$ where any of the functions $g_{\xi^\ast_{C}}$ can have a kink.
It follows that any solution $\xi^\ast_{C} \in \mathcal{X}^\ast_{C}$ is optimal on the whole interval $[C_1, C_2]$.
\\
But that means if we find all values $C$ at which $g$ has a kink, and a corresponding optimal solution $\xi^\ast$ for each of those $C$, we have an optimal solution to \eqref{cutoff} and the value of $g$ for any $0 \leq C < \infty$.

\medskip
We describe in Algorithm~\ref{algo:calAllC} how we can calculate these optimal solutions $\xi^\ast$ and values of $g$ in at most $n-1$ steps, by finding the different line segments.
The function $\mathtt{bar}(C,\mathcal{A})$ calculates an optimal solution $\xi^\ast$ of \eqref{cutoff} and the corresponding value $\mathcal{Z}^\ast_C$ for a given cutoff $C$. $\mathtt{bar}(\infty,\mathcal{A})$ returns an optimal solution to \eqref{classic}.
\\
Let the set $\mathcal{S} \subseteq \lbrace 0, \ldots ,n-1 \rbrace$ contain the \emph{slopes} of the line segments that we have already found and the set $\mathfrak{O} \subseteq \lbrace 0, \ldots ,n-1 \rbrace \setminus \mathcal{S}$ the slopes of the segments that we still might find, the \emph{open} slopes. 
Each of the lines $l_{n-1}, \ldots, l_0$ is 
defined by a point $(C,\mathcal{Z}^\ast_C)$ and the slope $i$, which is indicated by its index. 
The algorithm will calculate (up to) $n$ different lines. These lines are tangents for the function $g$. 
By calculating the intersection points of lines $l_i$ and $l_{i+1}$ we get the kinks of $g$ and thereby the complete function $g(C)$, which is a combination of segments of the lines $l_{n-1}, \ldots, l_0$.

\SetKwInOut{input}{Input}\SetKwInOut{output}{Output}
\begin{algorithm}[!htb]
  \caption{Calculate $f_C(\mathcal{A})$ for all $C$}\label{algo:calAllC}  
  \input{The set $\mathcal{A} = \lbrace a_1 ,\ldots, a_n \rbrace$}   
  \output{The function $g(C)$ and up to $n-1$ barycenters corresponding to the different values of $C$.}
  \BlankLine
  Set $\mathcal{S} \leftarrow \lbrace 0,n-1 \rbrace$, $\mathfrak{O} \leftarrow \lbrace 1,\ldots,n-2 \rbrace$\;
  $(\xi^\ast_0,\mathcal{Z}^\ast_0) \leftarrow \mathtt{bar}(\infty,\mathcal{A})$, $(\xi^\ast_{n-1},\mathcal{Z}^\ast_{n-1}) \leftarrow (a_1,0)$\;
  Define the lines $l_0$ by the point $(0,\mathcal{Z}^\ast_0)$ and the slope $0$ and $l_{n-1}$ by the point $(0,0)$ and the slope $n-1$ \;
  \While{$\mathfrak{O} \neq \emptyset$}{
    Take smallest index $o$ from $\mathfrak{O}$\;
    Take the largest $i \in \mathcal{S}: i<o$ and the smallest $j \in \mathcal{S}: o<j$\;
    Calculate the intersection point $(C,y)$ of lines $l_i$ and $l_j$\;
    $(\xi^\ast_C, \mathcal{Z}^\ast_C) \leftarrow \mathtt{bar}(C,\mathcal{A})$ \;
    \If{$\mathcal{Z}_C^\ast = y$}{
      $\mathfrak{O} \leftarrow \mathfrak{O}\setminus\lbrace i+1, i+2, \ldots,j-1 \rbrace$\;
    }
    \Else{
      Set $m \leftarrow |\act(\xi^\ast_C)|$, $\xi^\ast_{n-m} \leftarrow \xi^\ast_C$, $\mathcal{Z}^\ast_{n-m} \leftarrow \mathcal{Z}_C^\ast$\;
      Define $l_{n-m}$ by the point $(C,\mathcal{Z}^\ast_{n-m})$ and slope $n-m$\;
      $\mathfrak{O} \leftarrow \mathfrak{O}\setminus\lbrace n-m \rbrace$\;
      $\mathcal{S} \leftarrow \mathcal{S}\cup\lbrace n-m \rbrace$\;
    }
  }
  \For{$i\in \mathcal{S} \setminus \lbrace n-1 \rbrace$}{
    $j \leftarrow \min \lbrace k\in \mathcal{S} \ \vert \ k > i \rbrace$\;
    Calculate the intersection point $(C_i,g(C_i))$ of lines $l_i$ and $l_j$\;
  }
  $(C_{n-1},g(C_{n-1})) \leftarrow (0,0)$\;
  \Return{$\lbrace (C_i,g(C_i)) \ \vert \ i\in \mathcal{S} \rbrace$, $\lbrace \xi^\ast_i | i \in \mathcal{S} \rbrace$}
\end{algorithm}

\begin{theo}
  Say we can solve \eqref{cutoff} in $h_C(n)$ time. 
  Then Algorithm~\ref{algo:calAllC} computes the function $g$ in $\mathcal{O}(n\cdot h_C(n))$ time.
\end{theo}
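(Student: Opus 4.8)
The plan is to establish two things at once: that the while loop terminates after at most $n-2$ passes, each costing one call to $\mathtt{bar}$ plus $\mathcal{O}(n)$ overhead, and that on termination the recorded data describe $g$ exactly. Assuming $h_C(n)=\Omega(n)$ (any algorithm solving \eqref{cutoff} must at least read all $n$ points), this yields the claimed bound $\mathcal{O}(n\cdot h_C(n))$. The structural input is Theorem~\ref{thm:gContNondecConc} together with the discussion preceding the algorithm: $g$ is continuous, concave and piecewise linear, and by Lemma~\ref{lem:locallyConcaveAndCont}(i) a piece on which an optimizer $\xi^\ast_C$ has $m=|\act_C(\xi^\ast_C)|$ active points has integer slope $n-m\in\{0,\ldots,n-1\}$; hence $g$ has at most $n$ linear pieces. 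Since $g$ is the pointwise minimum of the functions $g_\xi$, every line $l_i$ produced by the algorithm is a supporting line of $g$ from above of slope $i$, touching the graph on the slope-$i$ piece.

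First I would record the loop invariant: at the top of each pass, $\mathcal{S}$ consists of slopes for which the correct supporting line $l_i$ is already known, $\mathfrak{O}$ holds the still-undecided slopes, and every slope in $\{0,\ldots,n-1\}\setminus(\mathcal{S}\cup\mathfrak{O})$ has been proven absent from $g$. Initialisation is immediate: $l_{n-1}$ through $(0,0)$ with slope $n-1$ and $l_0$ through $(0,\mathcal{Z}^\ast)$ with slope $0$ are the extreme supporting lines (both bound $g$ from above by Lemma~\ref{lem:UpperBounds}, and each is tight, for small resp.\ large $C$).

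Next I would analyse one pass for the bracketing neighbours $i<o<j$ in $\mathcal{S}$ and the intersection $(C,y)$ of $l_i$ and $l_j$. Since $g\le l_i$ and $g\le l_j$, we have $\mathcal{Z}^\ast_C=g(C)\le y$. If equality holds, then $C$ lies on both supporting lines and on the graph, so it is simultaneously a touch point of the slope-$i$ and the slope-$j$ piece; being shared by two pieces of different slope it must be their common kink, whence no slope strictly between $i$ and $j$ occurs and deleting $\{i+1,\ldots,j-1\}$ from $\mathfrak{O}$ is justified. If instead $\mathcal{Z}^\ast_C<y$, the supporting line $l_{n-m}$ of slope $n-m=n-|\act_C(\xi^\ast_C)|$ is discovered. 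The crux, and the step I expect to cost the most care, is to show $i<n-m<j$. Monotonicity of the supergradients of the concave $g$ gives $i\le n-m\le j$, because $l_j$, $l_{n-m}$, $l_i$ touch $g$ at non-increasing arguments; strictness follows from uniqueness of the supporting line of a given slope (for concave $g$ its intercept is $\sup_{C'}(g(C')-sC')$, a single value), so $n-m\in\{i,j\}$ would force $l_{n-m}\in\{l_i,l_j\}$ and hence $g(C)=y$, a contradiction. Thus $n-m\in\{i+1,\ldots,j-1\}\subseteq\mathfrak{O}$, the update is legitimate, and the invariant is preserved.

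Finally I would close the count. In both branches $|\mathfrak{O}|$ strictly decreases (the first deletes at least $o$, the second deletes $n-m$), so from $|\mathfrak{O}|=n-2$ the loop makes at most $n-2$ passes and at most $n-2$ calls to $\mathtt{bar}$; with the two initial calls this is $\mathcal{O}(n)$ calls, each of cost $h_C(n)$, while neighbour search, the single line intersection and the set updates cost $\mathcal{O}(n)$ per pass and the concluding for loop costs $\mathcal{O}(n^2)$, all absorbed into $\mathcal{O}(n\cdot h_C(n))$ once $h_C(n)=\Omega(n)$. On termination $\mathfrak{O}=\emptyset$, so $\mathcal{S}$ lists exactly the slopes present in $g$ with their correct supporting lines; intersecting consecutive lines returns the kinks and $g$ equals the lower envelope $\min_{i\in\mathcal{S}}l_i$, i.e.\ the returned piecewise-linear function, with one barycenter stored per piece.
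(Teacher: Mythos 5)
Your proposal is correct and follows essentially the same route as the paper's proof: both rely on the concavity and piecewise linearity of $g$ from Lemma~\ref{lem:locallyConcaveAndCont} and Theorem~\ref{thm:gContNondecConc}, analyze the two branches of the while loop (intersection point on $g$ versus strictly above $g$), and bound the number of calls to $\mathtt{bar}$ by $\mathcal{O}(n)$. You additionally make explicit a detail the paper leaves implicit, namely that the newly discovered slope satisfies $i<n-m<j$ so that it really lies in $\mathfrak{O}$, which is a welcome refinement but not a different argument.
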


\begin{proof}
  We have to prove two things: first the runtime and second the correctness.
  \\
  First: The function $\mathtt{bar}(C,\mathcal{A})$ is called at most $n-1$ times. Once in line $2$ and once in every iteration of the while-loop, lines $4$ to $18$. The while-loop is called at most $n-2$ times.
  The computation of the (at most) $n-1$ intersection points in line $19$ to $22$ is done in $\mathcal{O}(n)$ time.
  Together we have a runtime of $\mathcal{O}(n\cdot h_C(n) + n) = \mathcal{O}(n\cdot h_C(n))$.
  \\
  Second:
  We know by Lemma~\ref{lem:locallyConcaveAndCont} that $g$ starts in $(0,0)$ with slope $n-1$ and will eventually get constant, taking the value $g(C) = \mathcal{Z}^\ast_0 = \mathtt{bar}(\infty,\mathcal{A})$. 
  So the lines $l_0$ and $l_{n-1}$ defined in line $3$ contain the outermost line segments of $g$ and bound $g$ above due to its concavity.
  \\
  We have to prove that the algorithm finds all line segments in between.
  Suppose we have two line segments $l_i$ and $l_j$.
  Now we compute the intersection of those lines in line $7$. 
  We know that this intersection point $(C,y)$ must lie on or above $g$ by concavity. We now calculate the value $\mathcal{Z}_C^\ast$ for this $C$ and get a point $(C,\mathcal{Z}_C^\ast)$ that is on $g$. There are two possibilities:
  \begin{itemize}
    \item Either $y = \mathcal{Z}_C^\ast$, which means the intersection point of the lines \emph{is} already on $g$. But that means that the function $g$ can have no line segments with slopes between $i$ and $j$, so all these values are removed from the set $\mathfrak{O}$ in line $10$.
    \item Or we found a point on $g$ that is below the intersection point $(C,y)$. For the found barycenter $\xi^\ast$ we look at the cardinality $m$ of $\act(\xi^\ast)$. The slope of the line $l_{n-m}$, which 
  is a tangent on $g$, is given by the \emph{non}-active points. So the next line $l_{n-m}$ is given by the slope $n-m$ and the point $(C,\mathcal{Z}_c^\ast)$. This line is saved and we add $n-m$ to $\mathcal{S}$ and delete $n-m$ from $\mathfrak{O}$.
  \end{itemize} 
  So with every iteration of lines $4$ to $18$ we either find out that $l_i$ and $l_j$ intersect on $g$ or we find one new line segment that is a tangent for $g$. The function $g$ is uniquely defined by these tangents.
\end{proof}

 Note that Algorithm~\ref{algo:calAllC} can be parallelized. When the intersection of two lines is calculated in line $7$, 
   the problem can then be split into a subproblem to the left of this point and to the right of this point.  

\begin{remark}
  Having computed $g$, the function $g^{(\alpha)} : \mathbb{R}_{+} \to \mathbb{R}$, $C \mapsto \min_{\xi \in \mathbb{R}^k \cup \{\emptyset\}} f_{C,\alpha}(\xi,\mca)$ is easily derived, since $g^{(\alpha)}(C) = \min \{ g(C), \alpha \cdot n \cdot C \}$. Provided that $\alpha < \frac{n-1}{n}$, which means $\alpha \cdot n$ is smaller than the initial slope of $g$, we obtain from the concavity of $g$ and the fact that it must eventually be constant, that there is exactly one $C_0>0$ where the graph of $g$ intersects with the linear function $[C \mapsto \alpha \cdot n \cdot C]$. We then have $g^{(\alpha)}(C) = \alpha \cdot n \cdot C$ to the left of $C_0$ and $g^{(\alpha)}(C) = g(C)$ to the right of $C_0$. If $\alpha \geq \frac{n-1}{n}$, we have $g^{(\alpha)} = g$ everywhere.
\end{remark}

\section{Applications}\label{sec:applications}

The original motivation for investigating \eqref{empty} comes from \cite{muller2020metrics}, where two of the current authors studied barycenters of finite collections of point patterns for their use as summary statistics. We briefly describe here the relevant details, because we think that the involved concepts and their algorithmic implications may well be of interest in the context of location problems where e.g.\ an optimal supply chain is to be maintained to a number of companies that each have several branch offices. 

For the present purpose we define a point pattern as a finite subset of $\R^k$ and denote the set of all such patterns by $\mfn$. Then for given point patterns $\xi_1,\ldots,\xi_m \in \mfn$, a barycenter is any minimizer of the Fr\'echet functional
\begin{equation} \label{eq:frechet}
  F(\zeta) = \sum_{j=1}^m \tau(\xi_j,\zeta)^q
\end{equation}
over $\zeta \in \mfn$. Here $\tau$ is the transport-transform (TT) metric on $\mfn$ introduced in \cite{muller2020metrics}. Basically, $\tau(\xi_j,\zeta)^q$ is the minimal ``cost'' of matching a subset of $\xi_j$ and a subset of $\zeta$, where each pairing of a point $x \in \xi_j$ and a point $z \in \zeta$ incurs a cost of $d(x,z)^q$ and each unmatched point of either pattern incurs a cost of $\frac12 C$.

If we consider point patterns as discrete measures by identifying $\xi = \{x_1,\ldots,x_n\}$ with $\sum_{i=1}^n \delta_{x_i}$ for pairwise distinct $x_i$, we can re-interpret the TT metric as a special case of an unbalanced Wasserstein metric, see \cite{chizat2018scaling} for the definition of the latter or \cite{muller2020metrics}, Remark~3, for the full argument.

Intuitively, a barycenter can be thought of as a ``typical'' representative, in a sense an ``average point pattern'' that reflects common properties of the data point patterns. In \cite{muller2020metrics} barycenters were applied to point patterns of crime locations in two cities, with the goal of detecting systematic differences over the years or between different seasons. Another goal might be for planning the efficient deployment of police officers according to the time of the day (or year) and maybe other side constraints (predictive policing).

\cite{borgwardt2021computational} prove that the computation of a sparse Wasserstein barycenter is $\mathcal{NP}$-hard for three point patterns with the same number of points in $\mathbb{R}^2$ and $q=2$. In the authors' setting the barycenter can be a more general discrete finite measure (not necessarily with unit weights), but their sparseness condition limits the number of support points.
There does not seem to be a direct theoretical result for our problem \eqref{eq:frechet}, but based on the current state of theoretical and applied research, we assume that this problem is insolvable for all practical purposes. Therefore \cite{muller2020metrics} proposed a heuristic algorithm based on an equivalent form of the TT metric: First fill up the point patterns $\xi_1, \ldots, \xi_m$ so that they all have the same cardinality $n$, say, by adding points at a single ``virtual'' location $\aleph \not\in \R^k$ at distance $(\frac12 C)^{1/q}$ apart from any locations in $\R^k$. For $\xi_j = \{x_{j1},\ldots,x_{jn}\}$ and $\zeta = \{z_{1},\ldots,z_{n}\}$ (multisets since they may include $\aleph$ several times), we may then express the metric $\tau$ equivalently as

\begin{equation}  \label{eq:ourproblem}
  \tau(\xi_j,\zeta)^q = \min_{\pi \in S_n} \sum_{i=1}^n d'(x_{ji},z_{\pi(i)})^q,
\end{equation}
where $S_n$ denotes the set of permutations on $\{1,\ldots,n\}$ and
\begin{equation}  \label{eq:dprime}
  d'(x,z)^q = \begin{cases}
               \min \bigl\{d(x,z)^q, C\} &\text{if $x,z \in \R^k$};\\
               \frac12 C                   &\text{if $\aleph \in \{x,z\}$, $x \neq z$};\\
               0                       &\text{if $x=z=\aleph$};
            \end{cases}
\end{equation}
see \cite{muller2020metrics}, Theorem~1. We may then find a local optimum of the Fr\'echet functional~\eqref{eq:frechet} by alternating between forming pairwise disjoint clusters of the form $\mathcal{C} = \{x_{1,i_1}, \ldots, x_{m,i_m}\}$, $i_1, \ldots, i_m \in \{1,\ldots, n\}$, including exactly one (maybe virtual) point from each data pattern via optimal matching, and computing suitable ``centers'' for each such cluster $\mathcal{C}$ by minimizing
\begin{equation}  \label{eq:the_problem_prime}
  f(z) = \sum_{j=1}^m d'(x_{j,i_j},z)^q
\end{equation}
over $z \in \R^k \cup \{\aleph\}$. In the algorithm of \cite{muller2020metrics} this minimization was only performed approximately, using some crude but fast heuristics. However, except for the fact that $x_{j,i_j} = \aleph$ may hold for individual $j$, the minimization \eqref{eq:the_problem_prime} corresponds to problem (\ref{empty}) with $\alpha = \frac12$. Noting that the contribution from $x_{j,i_j} = \aleph$ is constant as long as $z \in \R^k$, we may therefore use a slightly adapted version of Algorithm~\ref{algo:improvement2} to compute the centers exactly.

\section{Simulation study}\label{sec:Simulation}

For comparing Drezners algorithm with the two improvements Algorithm~\ref{algo:improvement} and Algorithm~\ref{algo:improvement2}
, we created six test scenarios of point patterns inside the unit square and compared runtimes and solutions of the algorithm. 
For scenarios (1) to (5) we chose rectangles and generated the coordinates of the points inside each rectangle uniformly at random, independently of one another. 
In scenario (2) to (5) we combined two of those rectangles. 
The number of points in every rectangle follows a Poisson distribution with parameters chosen in such a way that the expected number of points is $600$ in each scenario. 
The scenarios are (from left to right, top to bottom):

\begin{minipage}{0.5\textwidth}
  \begin{itemize}
    \item [(1)] one unit square \vspace*{-0.6em}
    \item [(2)] two squares with edge length $0.5$ that overlap in a square of size $0.1 \times 0.1$, half of the points in each square \vspace*{-0.6em}
    \item [(3)] one small square with edge length $0.4$ inside the unit square, half of the points in each square \vspace*{-0.6em}
    \item [(4)] one small square with edge length $0.3$ inside the unit square, half of the points in each square \vspace*{-0.6em}
    \item [(5)] two rectangles overlapping on one strip of width $0.2$. Height for both rectangles is $0.5$, width $0.5$ and $0.6$, half of the points in each rectangle \vspace*{-0.6em}
    \item [(6)] $4$ small clusters with background noise. The clusters are two-dimensional Gaussians with $\sigma = 0.025$. The clus\-ter cen\-ters are uniformly drawn for each pattern individually. The expected num\-ber of points in the clusters is $90\%$, an expected number of $10\%$ are uniformly drawn in the unit square. 
  \end{itemize}
\end{minipage}
\begin{minipage}{0.5\textwidth}
  \vspace*{-1.3em}
  \includegraphics[scale=0.25]{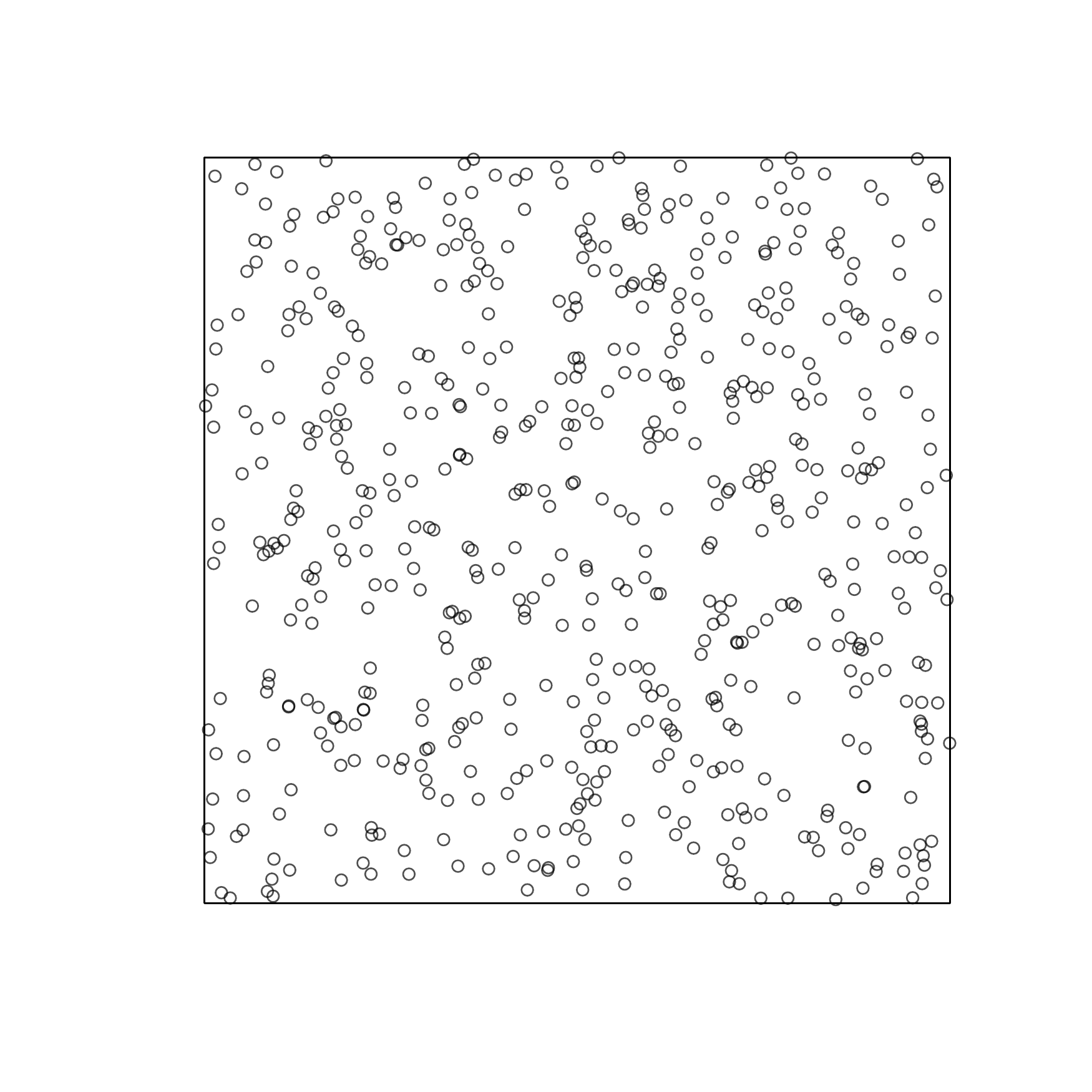}\hspace*{-1.73em}
  \includegraphics[scale=0.25]{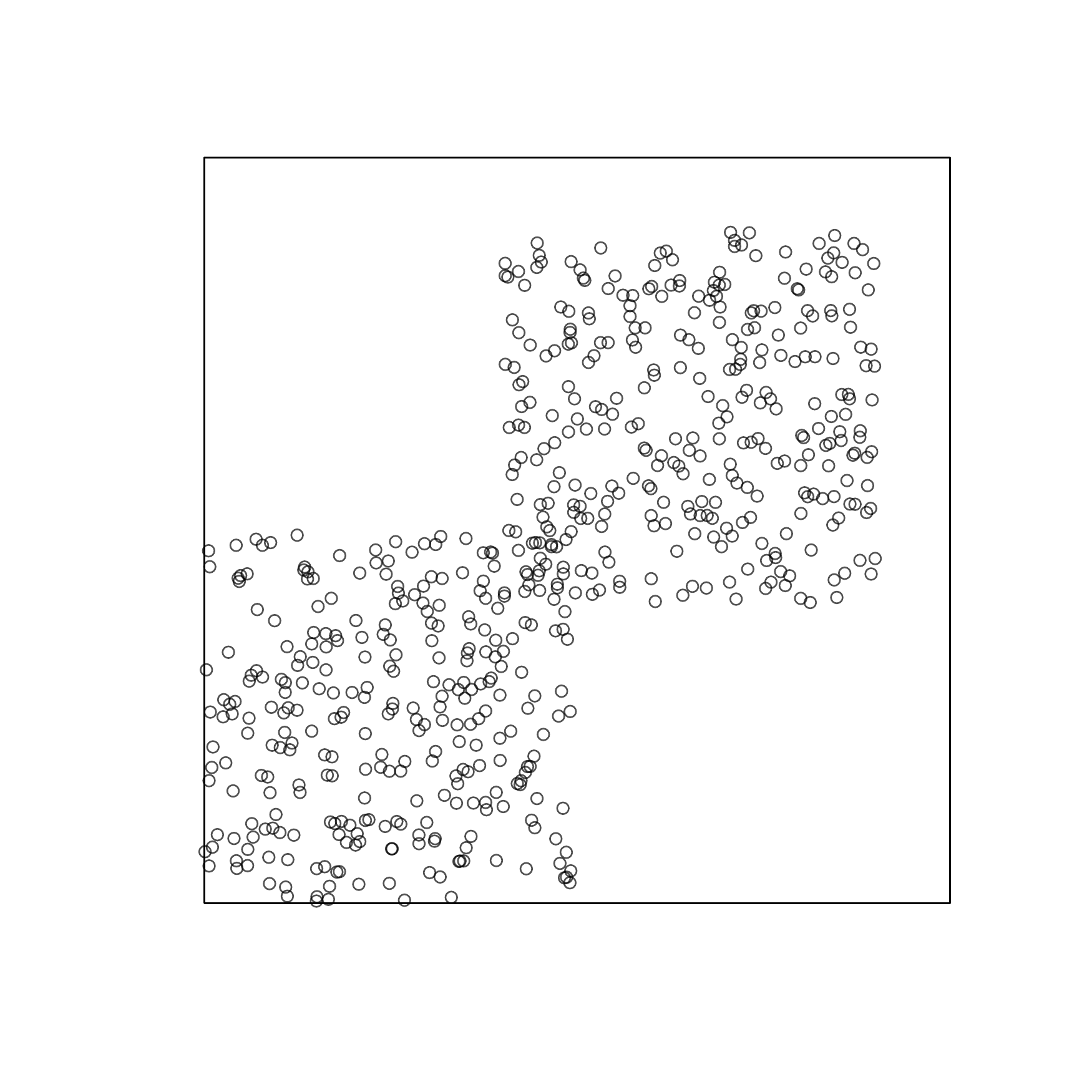}\vspace*{-1.99em}
  \\
  \includegraphics[scale=0.25]{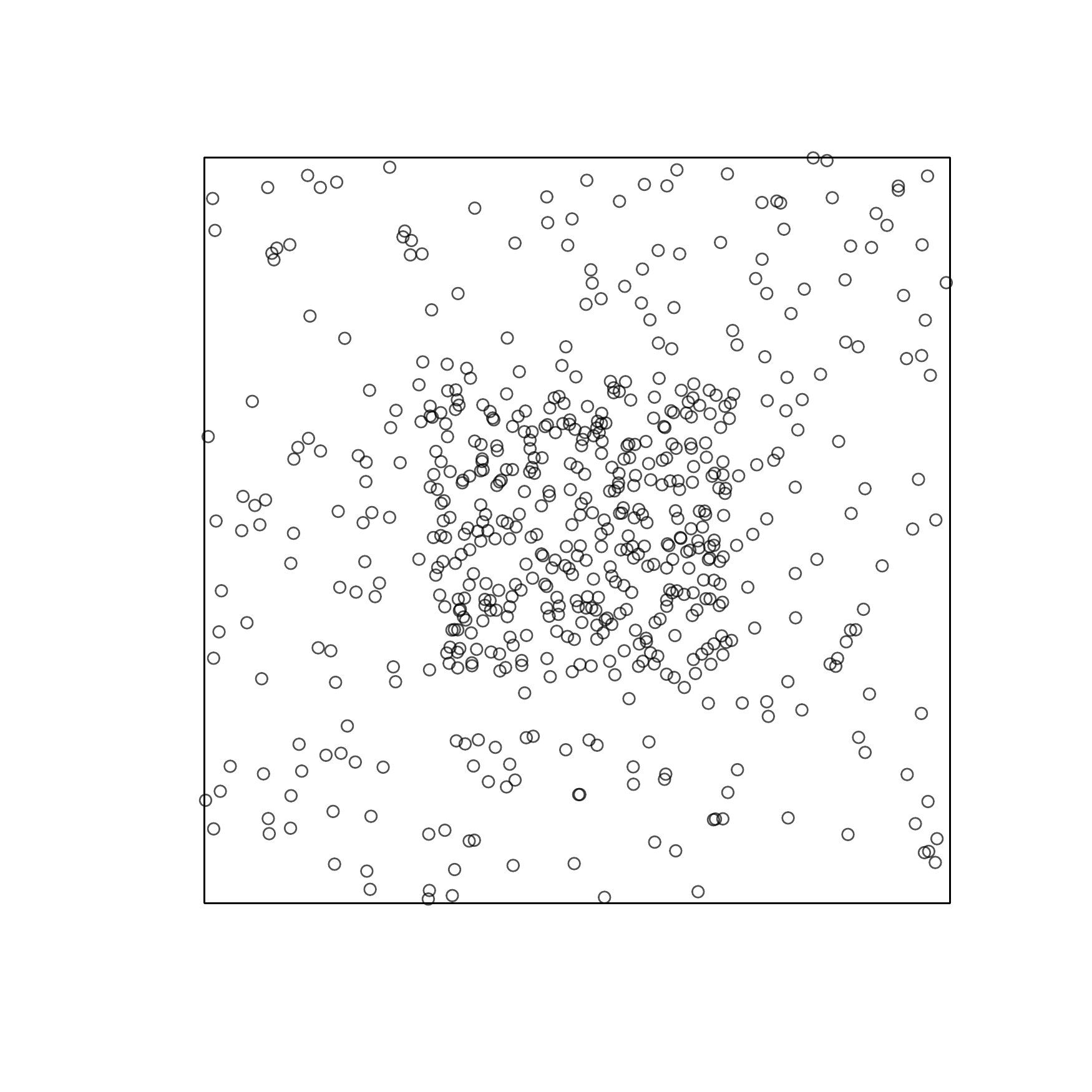}\hspace*{-1.73em}
  \includegraphics[scale=0.25]{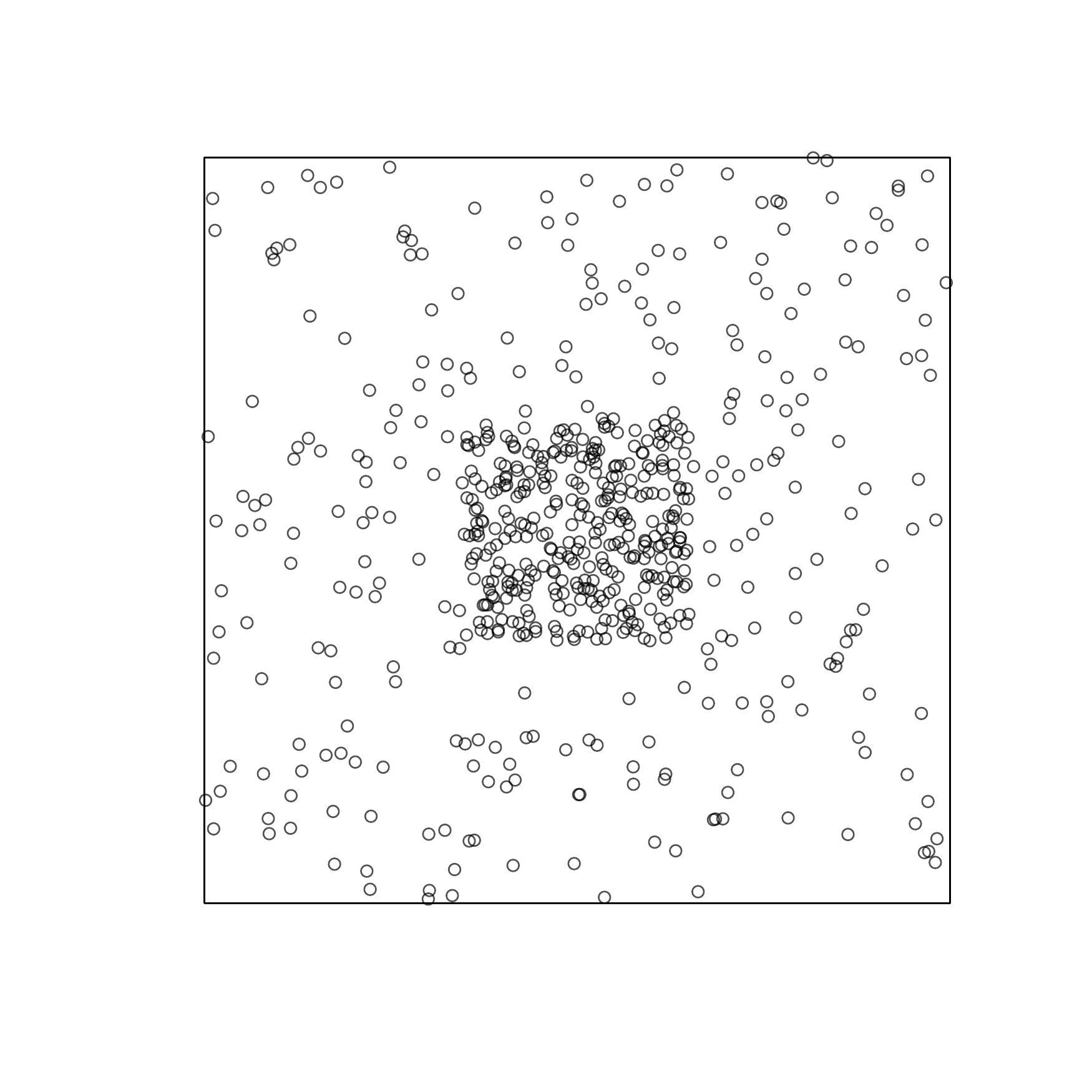}\vspace*{-1.99em}
  \\
  \includegraphics[scale=0.25]{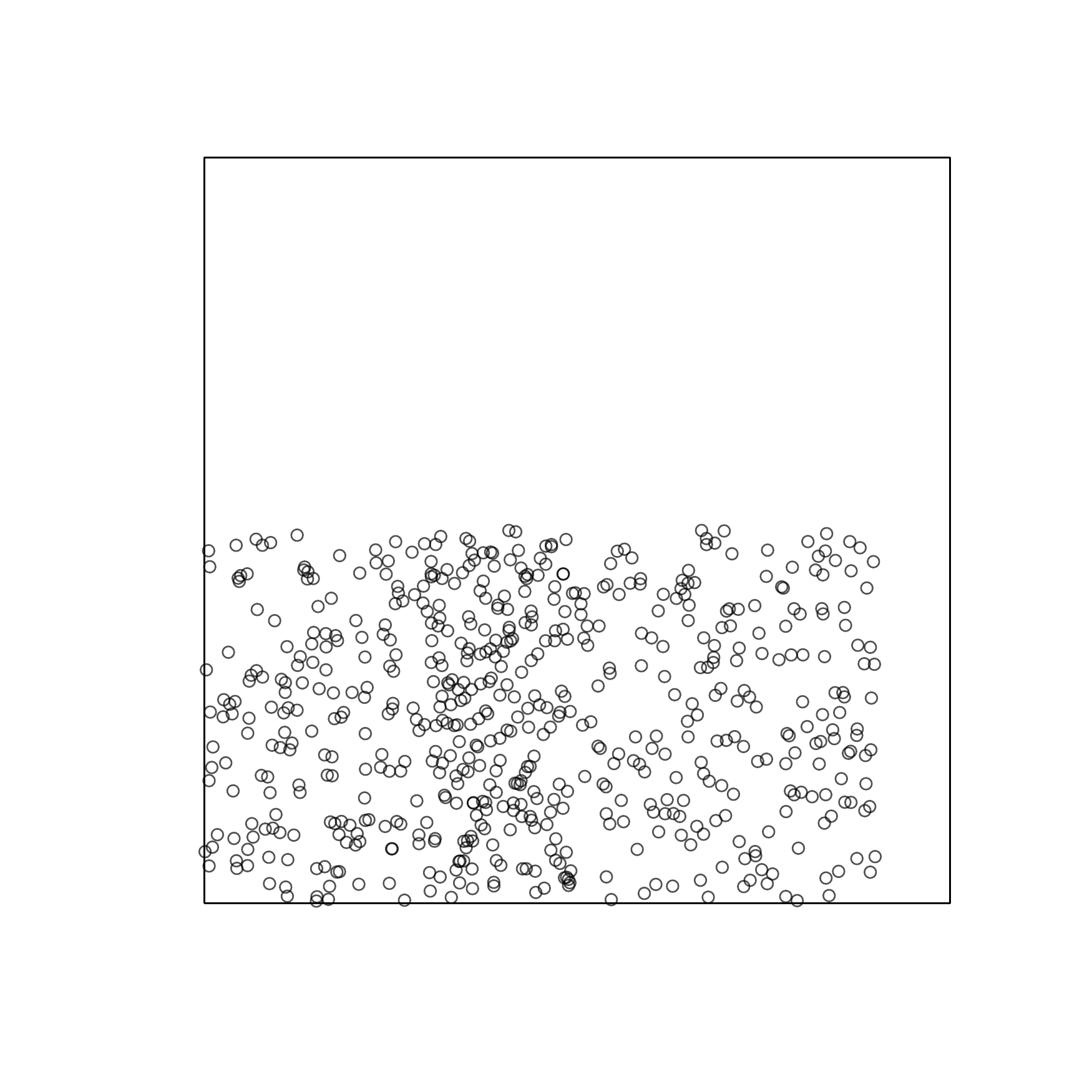}\hspace*{-1.73em}
  \includegraphics[scale=0.25]{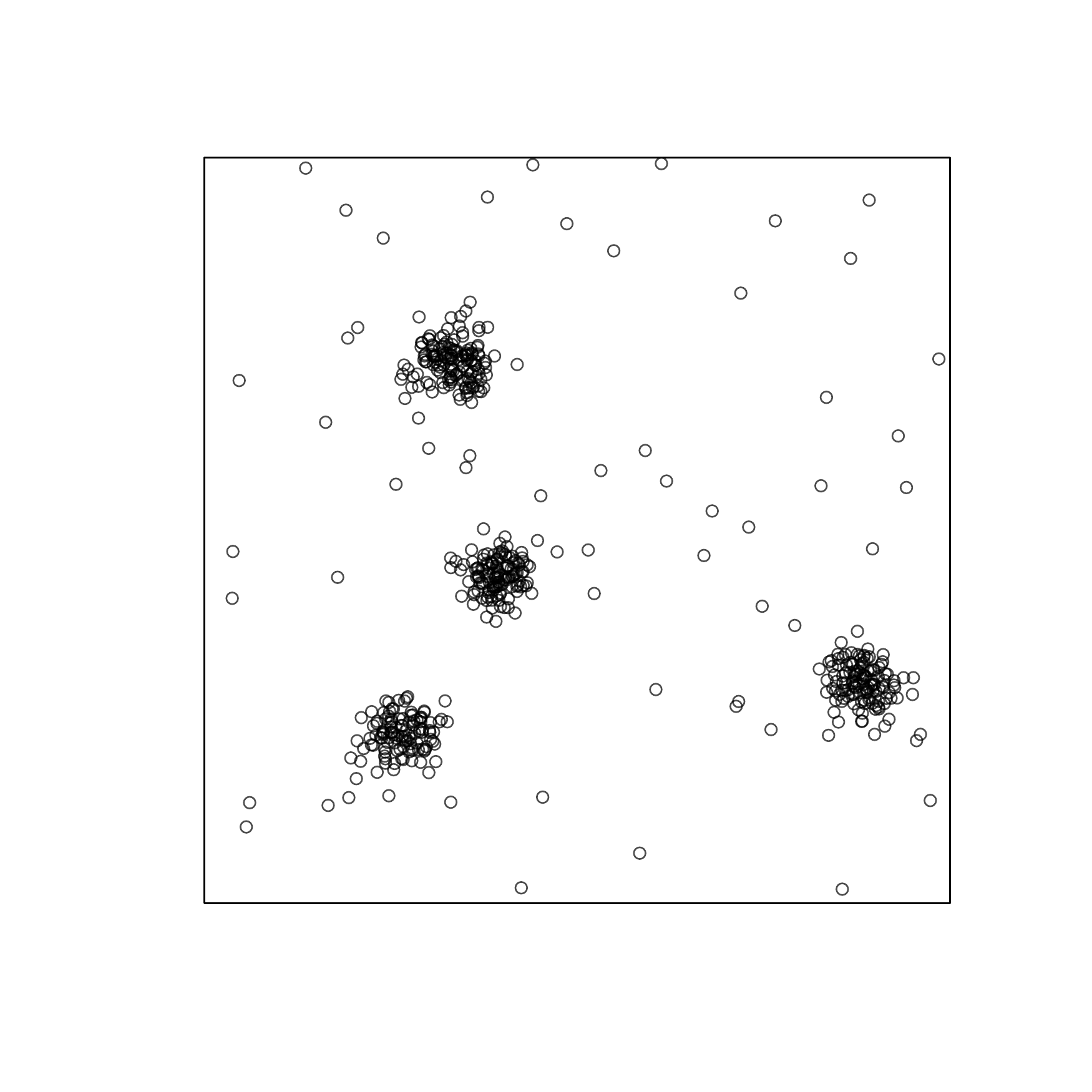}
\end{minipage}
We ran a simulation study with $100$ patterns from each scenario to compare the three algorithms. 
The results are in Table~\ref{tab:ImprovementResults}. 
As expected, all three algorithms find the exact solution every time. 
\\
In these calculations we set $\alpha=0.5$, so the cost of an empty barycenter is $C \cdot \frac{|\mathcal{A}|}{2}$. We have $d=\ell_2$ and $q=2$. 
For the computation we used the publicly available \textsf{R}-package \textsf{ttbary}, see~\cite{ttbary}.
\\
The runtime depends highly on the point pairs from which barycenter candidates are calculated. 
The larger the cutoff $C$, the more point pairs are taken into account and barycenter candidates have to be checked. 
Therefore the runtime gets higher with larger cutoff. 
For Algorithm~\ref{algo:Drezner} for the smallest cutoff $C=0.01$ the runtime is about $4-12$ seconds for the $100$ patterns combined. 
In Scenario $6$ the runtime for $C=0.01$ is about $81$ seconds for the $100$ runs, because even with the small cutoff due to the small clusters many barycenter candidates have to be calculated. For all the scenarios the runtime goes up to $1709-2574$ seconds for $C=0.3$. 
We also counted how many barycenter candidates had to be calculated by this algorithm in total for each scenario.
In Table~\ref{tab:ImprovementResults} we compare the runtime of the two improved Algorithms~\ref{algo:improvement} and~\ref{algo:improvement2} to the `original' runtime and compare how many barycenter candidates could be skipped by the improved algorithms.
\\
The column `skipped points' presents for each scenario and cutoff the \emph{relative} number of barycenter candidates that were skipped by this algorithm. 
\\
The values correspond to Algorithm~\ref{algo:improvement}/Algorithm~\ref{algo:improvement2}/Algorithm~\ref{algo:Drezner} (first and second improvement and original algorithm). 
For example in Scenario $1$, $C=0.01$ the $0.450/1.000/0$ means that Algorithm~\ref{algo:improvement} was able to skip $45\%$ of the barycenter candidates, Algorithm~\ref{algo:improvement2} skipped $100\%$ of the barycenter candidates, and of course Algorithm~\ref{algo:Drezner} skipped nothing.
\\
Similarly the column `time' presents for each scenario and cutoff the \emph{relative} time the algorithms took for the $100$ point patterns compared to the runtime of the original Algorithm~\ref{algo:Drezner}.
For example in Scenario $1$, $C=0.01$ the $0.702/0.252/1$ means that Algorithm~\ref{algo:improvement} was about $30\%$ faster and Algorithm~\ref{algo:improvement2} was about $75\%$ faster than Algorithm~\ref{algo:Drezner}.
\medskip

We can clearly see the connection between the amount of skipped barycenter candidates and the amount of time that is saved.
The first improvement, Algorithm~\ref{algo:improvement}, is almost never slower than the original algorithm and can for smaller cutoffs $C$ save up to $30\%$ of the runtime. 
The second improved version, Algorithm~\ref{algo:improvement2}, is much faster than the other two. 
The empty barycenter is in these scenarios for small cutoffs always the optimal solution. 
For cutoffs up to $C=0.1$ almost all point pairs can be skipped a priori. 
In scenario $1$ even for $C=0.2$ the runtime is below $1\%$ of the runtime of the original algorithm.

\begin{table}[!htb]
  \begin{footnotesize}
    \begin{tabular}{|l|c|c||c|c||c|c|}
      \hline
       & Scenario 1 & & Scenario 2 & & Scenario 3 &
      \\
      \hline
      $C=$ & skipped points & time &  skipped points & time & skipped points & time
      \\
      \hline
      $0.01$ & 0.450/1.000/0 & 0.702/0.252/1 & 0.383/1.000/0 & 0.703/0.144/1 & 0.289/1.000/0 & 0.778/0.133/1
      \\
      \hline
      $0.02$ & 0.273/1.000/0 & 0.773/0.082/1 & 0.128/1.000/0 & 0.911/0.044/1 & 0.105/1.000/0 & 0.911/0.038/1 
      \\
      \hline
      $0.03$ & 0.094/1.000/0 & 0.940/0.040/1 & 0.027/1.000/0 & 0.970/0.020/1 & 0.087/1.000/0 & 0.917/0.018/1 
      \\
      \hline
      $0.05$ & 0.006/1.000/0 & 0.999/0.015/1 & 0.004/1.000/0 & 0.992/0.008/1 & 0.081/1.000/0 & 0.922/0.007/1 
      \\
      \hline
      $0.075$ & 0.001/1.000/0 & 1.005/0.007/1 & 0.001/1.000/0 & 1.001/0.004/1 & 0.073/1.000/0 & 0.931/0.003/1 
      \\
      \hline
      $0.1$ & 0.000/1.000/0 & 1.005/0.004/1 & 0.000/1.000/0 & 1.001/0.002/1 & 0.064/1.000/0 & 0.938/0.002/1 
      \\
      \hline
      $0.2$ & 0.000/0.995/0 & 1.002/0.006/1 & 0.000/0.762/0 & 1.000/0.239/1 & 0.049/0.287/0 & 0.953/0.716/1 
      \\
      \hline
      $0.3$ & 0.000/0.263/0 & 1.000/0.738/1 & 0.000/0.009/0 & 1.001/0.991/1 & 0.018/0.041/0 & 0.983/0.960/1 
      \\
      \hline
      \hline
      & Scenario 4 & & Scenario 5 & & Scenario 6 & 
      \\
      \hline
      $C=$ & skipped points & time &  skipped points & time & skipped points & time
      \\
      \hline
      $0.01$ & 0.189/1.000/0 & 0.854/0.095/1 & 0.397/1.000/0 & 0.683/0.125/1 & 0.056/1.000/0 & 0.954/0.012/1
      \\
      \hline
      $0.02$ & 0.071/1.000/0 & 0.952/0.026/1 & 0.137/1.000/0 & 0.886/0.036/1 & 0.023/1.000/0 & 0.979/0.004/1 
      \\
      \hline
      $0.03$ & 0.070/1.000/0 & 0.945/0.012/1 & 0.044/1.000/0 & 0.963/0.017/1 & 0.012/1.000/0 & 0.990/0.002/1 
      \\
      \hline
      $0.05$ & 0.072/1.000/0 & 0.933/0.005/1 & 0.008/1.000/0 & 0.993/0.006/1 & 0.015/1.000/0 & 0.986/0.002/1 
      \\
      \hline
      $0.075$ & 0.076/1.000/0 & 0.926/0.002/1 & 0.004/1.000/0 & 0.999/0.003/1 & 0.080/0.987/0 & 0.920/0.014/1 
      \\
      \hline
      $0.1$ & 0.079/0.981/0 & 0.925/0.021/1 & 0.002/1.000/0 & 0.998/0.002/1 & 0.181/0.953/0 & 0.820/0.048/1 
      \\
      \hline
      $0.2$ & 0.116/0.231/0 & 0.887/0.772/1 & 0.000/0.171/0 & 1.000/0.831/1 & 0.127/0.528/0 & 0.875/0.473/1 
      \\
      \hline
      $0.3$ & 0.028/0.032/0 & 0.972/0.968/1 & 0.000/0.000/0 & 1.001/1.000/1 & 0.042/0.132/0 & 0.960/0.870/1 
      \\
      \hline
    \end{tabular}
    \caption{Comparison of the runtime of the two improved Algorithms~\ref{algo:improvement} and~\ref{algo:improvement2} to the `original' runtime of Algorithm~\ref{algo:Drezner} and how many barycenter candidates were skipped by the improved algorithms.
    The column `skipped points' presents for each scenario and cutoff the \emph{relative} number of barycenter candidates that were skipped by this algorithm. 
    The values correspond to Algorithm~\ref{algo:improvement}/Algorithm~\ref{algo:improvement2}/Algorithm~\ref{algo:Drezner} (first and second improvement and original algorithm). 
    Similarly the column `time' presents for each scenario and cutoff the \emph{relative} time the algorithms took for the $100$ point patterns compared to the runtime of the original Algorithm~\ref{algo:Drezner}.}
    \label{tab:ImprovementResults}
  \end{footnotesize}
\end{table}

\medskip
\subsection{Consequences for the barycenter algorithm of \cite{muller2020metrics}}\label{subsec:consequences}

As mentioned in Section~\ref{sec:applications} problem~\ref{eq:the_problem_prime}, that stems from \cite{muller2020metrics}, is identical to \eqref{empty} with $\alpha=\frac{1}{2}$. In the algorithm of \cite{muller2020metrics} this problem was solved by a fast heuristic:
\\
Starting with a point $x \in \mathbb{R}^k$ we calculate $\act(x)$, solve \Nam$(\act(x))$ with optimal solution $\xi^\ast$ and set $x \leftarrow \xi^\ast$.
The heuristic uses the idea that is proven in Lemma~\ref{lemma1}, that the optimal solution of \eqref{cutoff} must be an optimal solution of (\Nam)\ for some subset of $\mathcal{A}$. 
With this heuristic the objective function value cannot increase, since the distances to $\act(x)$ are optimized and the distances to $\con(x)$ cannot increase by definition of $\con(x)$.
\\

An implementation of the original algorithm of \cite{muller2020metrics} can be found in the publicly available R package \textsf{ttbary}, \cite{ttbary}. 
We implemented Algorithm~\ref{algo:improvement2} in the algorithm of \cite{muller2020metrics} to replace the heuristic.
In a simulation study we compared the implementation in \cite{ttbary} with our version in which the heuristic is replaced with Algorithm~\ref{algo:improvement2}. 

It turned out that doing the exact calculation instead of the heuristic for solving problem~\ref{eq:the_problem_prime} does not improve the algorithm of \cite{muller2020metrics} in general. 
In the algorithm the size of $\mathcal{A}$ in \eqref{empty} depends on the number of point patterns. 
The set $\mathcal{A}$ consists of exactly one point (including $\aleph$, see Section~\ref{sec:applications}) of every pattern. 
We compared the runtime and the resulting objective function value (cost) of the computed pseudo-barycenters.
For the three `groupsizes' of $20$, $50$ and $100$ point patterns per group we created $600$ groups each. 
Both algorithms had the same input for each of the $1800$ groups.
In our tests about half of the costs with the exact solutions of problem~\ref{eq:the_problem_prime} were smaller and half of the costs were larger compared to the heuristic. 
At the same time the runtime for the algorithm with the exact subroutine for \ref{eq:the_problem_prime} is about $3.5$, $13.5$ or $42$ times larger for the groupsizes of $20$, $50$ and $100$ respectively.
Since the heuristic is a lot faster and does not yield a worse solution we recommend to stay with the original version of the algorithm as it is presented in \cite{muller2020metrics}.

\section{Discussions and outlook}

In this paper we presented the problems \eqref{classic}, also known as the Weber-problem, and the extension \eqref{cutoff}, which is related to a problem studied by \cite{drezner1991facility}. Additionally we introduced the new barycenter problem \eqref{empty}, where we extend the classic problem by the option to have an empty solution.
In Sections~\ref{sec:cutoff} and \ref{sec:emptyCenter} we investigated under which conditions an optimal solution of \eqref{classic} is also an optimal solution to \eqref{cutoff} or \eqref{empty}.
We also investigated under which conditions an optimal solution to \eqref{classic} cannot be an optimal solution to \eqref{cutoff} or \eqref{empty}.
Most results are based solely on the geometric structure of the dataset, like the diameter of the set or the mean pairwise distance between its points.
The summaries of the results can be found in Table~\ref{table:summary1} and the statements thereafter and at the end of Section~\ref{sec:emptyCenter}.
\medskip

For the average problem we typically do not know if we can reduce \eqref{cutoff} or \eqref{empty} to \eqref{classic}. We presented two improvements of the algorithm introduced by \cite{drezner1991facility} to solve \eqref{cutoff} and \eqref{empty} more efficiently.
We furthermore gave an algorithm for solving \eqref{cutoff} simultaneously for \emph{all} $C \geq 0$ by solving $\mathcal{O}(n)$ problems of type \eqref{cutoff} for specified values of $C$.
\medskip

For future research it might be interesting to generalize the improved algorithms to the original problem stated by \cite{drezner1991facility}, who allowed different cutoffs for every point.

Another interesting topic is to find new criteria to determine beforehand if solving \eqref{classic} is sufficient. 
Another algorithmic idea is to split the original problem into subproblems that can be solved independently, where one optimal solution of the subproblems is guaranteed to be the optimal solution of the original problem. 
One could also study how \eqref{cutoff} simplifies for special cases like the $\ell_1$-metric, where we can optimize separately over the $k$ dimensions.
These findings could help to solve the problems \eqref{cutoff} and \eqref{empty} faster in the future.

\newpage

\bibliography{location}
\bibliographystyle{alpha}

\end{document}